\documentclass[a4paper,12pt,final]{amsart}

\usepackage[pdfauthor={Eugen Stumpf},%
pdftitle={On a delay differential equation arising from a car-following model: wavefront solutions with constant-speed and their stability},%
colorlinks=true]{hyperref}

\usepackage{tikz}

\newtheorem{theorem}{Theorem}[section]
\newtheorem{proposition}[theorem]{Proposition}

\newtheorem{corollary}[theorem]{Corollary}
\newtheorem{conjecture}[theorem]{Conjecture}
\theoremstyle{definition}

\addtolength{\textwidth}{+2cm}
\addtolength{\oddsidemargin}{-1cm}
\addtolength{\evensidemargin}{-1cm}
\addtolength{\textheight}{+1cm}
\addtolength{\topmargin}{-1cm}
\setlength{\topskip}{0cm}

\theoremstyle{remark}

\newtheorem{remark}[theorem]{Remark}

\numberwithin{equation}{section}
\newcommand{\R}{\mathbb{R}}  

{}
\copyrightinfo{2016}{Eugen Stumpf}

\PII{}

\title[Wavefront solutions with constant-speed and their stability] 
{On a delay differential equation arising from a car-following model: Wavefront solutions with constant-speed and their stability}
\author{Eugen Stumpf}
\address{Department of Mathematics, University of Hamburg, Bundesstrasse 55, 20146 Hamburg, Germany}
\email{eugen.stumpf@math.uni-hamburg.de}
\urladdr{www.math.uni-hamburg.de/home/stumpf/index\_en.html}
\subjclass{Primary: 34K19, 34K20; Secondary: 90B20.}
 \keywords{delay differential equation, stability, center manifold reduction, car-following model}
\date{September 2016}

\begin{document}

\begin{abstract}
	This work is concerned with the study of the scalar delay differential equation
	\begin{equation*}
	z^{\prime\prime}(t)=h^2\,V(z(t-1)-z(t))+h\,z^\prime(t)
	\end{equation*}
	motivated by a simple car-following model on an unbounded straight line. Here, the positive real $h$ denotes some parameter, and $V$ is a so-called \textit{optimal velocity function} of the traffic model involved. We analyze the existence and local stability properties of solutions $z(t)=c\,t+d$, $t\in\R$, with $c,d\in\R$. In the case $c\not=0$, such a solution of the differential equation forms a wavefront solution of the car-following model where all cars are uniformly spaced on the line and move with the same constant velocity. In particular, it is shown that all but one of these wavefront solutions are located on two branches parametrized by $h$. Furthermore, we prove that along the one branch all solutions are unstable due to the principle of linearized instability, whereas along the other branch some of the solutions may be stable. The last point is done by carrying out a center manifold reduction as the linearization does always have a zero eigenvalue. Finally, we provide some numerical examples demonstrating the obtained analytical results.
\end{abstract}
\maketitle
\section{Introduction}
Consider a system of countably infinite many cars moving one after another from the left to the right-hand side along a single lane road which we shall identify with the real line in the following. After fixing some origin on the road, and thus specifying the origin on the real line, we label the cars by the integers and denote the position of each car $j\in\mathbb{Z}$ at time $t\in\R$ relative to the origin by the coordinate $x_j(t)\in\R$ as indicated in Figure \ref{fig: setting}. 

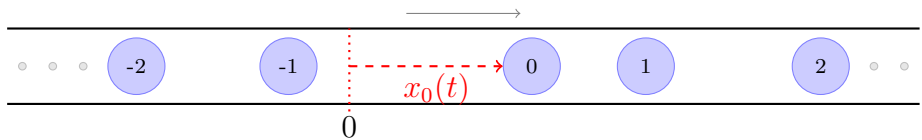
\begin{figure}[h]
	\vspace*{0.5cm}
	\begin{tikzpicture}
	\draw[thick] (0,0) -- (12.,0);
	\draw[thick] (0,1) -- (12,1);
	\draw[-,red,thick,dotted] (4.5,-0.1) -- (4.5,1);
	\node (auto-2) at (1.7,0.5) [circle,minimum size=.75cm,draw=blue!50,fill=blue!20] {\tiny -2};
	\node at (3.7,0.5) [circle,minimum size=0.75cm,draw=blue!50, fill=blue!20]{\tiny -1};
	\node (auto 0) at (6.9,0.5) [circle,minimum size=0.75cm, draw=blue!50, fill=blue!20]{\tiny 0};
	\node at (8.4,0.5) [circle,minimum size=0.75cm,draw=blue!50, fill=blue!20]{\tiny 1};
	\node at (10.7,0.5) [circle,minimum size=0.75cm, draw=blue!50, fill=blue!20]{\tiny 2};
	\node at (0.2,0.5) [circle, inner sep=1pt,draw=gray!50, fill=gray!20]{};
	\node at (0.6,0.5) [circle, inner sep=1pt,draw=gray!50, fill=gray!20]{};
	\node at (1,0.5) [circle, inner sep=1pt, draw=gray!50, fill=gray!20]{};
	\node at (11.8,0.5) [circle, inner sep=1pt,draw=gray!50, fill=gray!20]{};
	\node at (11.4,0.5) [circle, inner sep=1pt,draw=gray!50, fill=gray!20]{};
	\draw [->,gray] (5.25,1.2) -- (6.75,1.2);
	\draw[->,dashed,thick,red] (4.5,0.5) -- (auto 0.west);
	\node at (5.65,0.2){\textcolor{red}{$x_0(t)$}};
	\node at (4.5,-0.3){\textcolor{black}{$0$}};
	\end{tikzpicture}
	\caption{The schematic setting of the car-following model}\label{fig: setting}
\end{figure}
Let us assume that each driver attempts to drive with a velocity according to some optimal velocity which depends only on the headway, that is, on the distance to the car in front. Then, considering the most simple case where the \textit{optimal velocity function} $V:\R\to[0,\infty)$ is the same for each driver, the motion of the cars along the line is given by the system
\begin{equation}\label{eq: traffic_model}
x^{\prime\prime}_{j}(t)=V(x_{j+1}(t)-x_j(t))-x^{\prime}_j(t),\qquad j\in\mathbb{Z}.
\end{equation}
of coupled ordinary differential equations.
For the optimal velocity function $V$ we make the following standing assumptions:
\begin{itemize}
	\item[\textbf{(OVF 1)}] $V$ is non-negative and monotonically increasing.
	\item[\textbf{(OVF 2)}] $V$ is bounded from above by some maximum velocity $V^{\max}>0$ and
	\begin{equation*}
	\lim_{s\to\infty}V(s)=V^{\max}.
	\end{equation*}
	\item[\textbf{(OVF 3)}] There is a \emph{safety distance} $d_S\geq 0$ such that $V(s)=0$ for all $ s\leq d_S$ and $V(s)>0$ as $s>d_S$.
	\item[\textbf{(OVF 4)}] $V$ is $C^1$-smooth, twice continuously differentiable in $(d_S,\infty)$, and there is some constant $b>0$ such that $V^{\prime}$ is strictly increasing in $(d_S,b)$ and, on the other hand, strictly decreasing in $(b,\infty)$. 
\end{itemize}
An example of such a function is given by
\begin{equation}\label{eq: OVF}
V_q(s):=\begin{cases}
V^{\max}\,\displaystyle{\frac{(s-d_S)^2}{1+(s-d_S)^2}},& \text{for }s\geq d_S,\\
0,&\text{for }s<d_S,
\end{cases}
\end{equation}
with some fixed maximum velocity $V^{\max}>0$ and safety distance $d_S\geq 0$, and the typical shape of $V$ and $V^\prime$ is indicated in Figure \ref{fig: optVel}.
\begin{figure}[ht]
	\includegraphics[width=6cm]{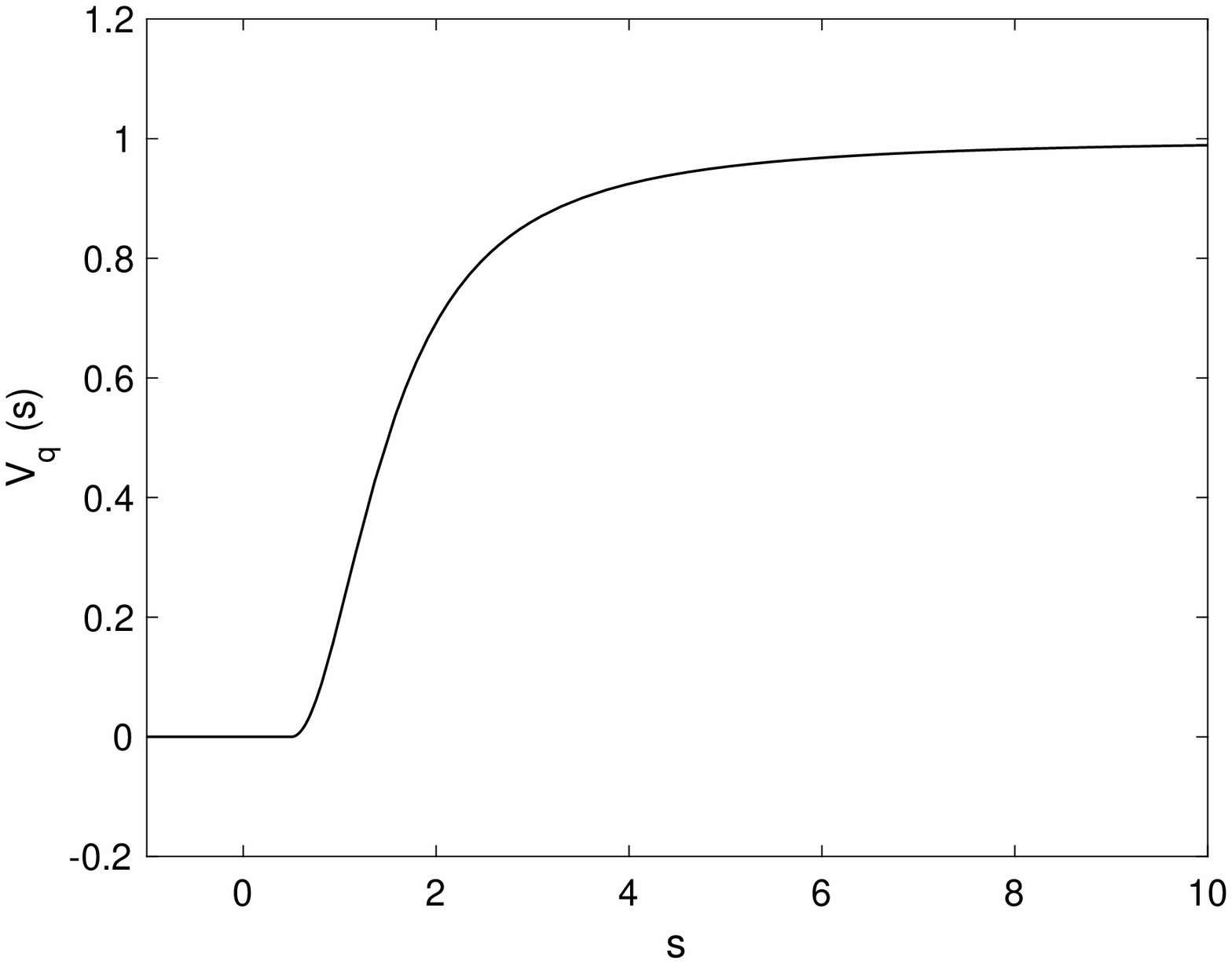}
	\includegraphics[width=6cm]{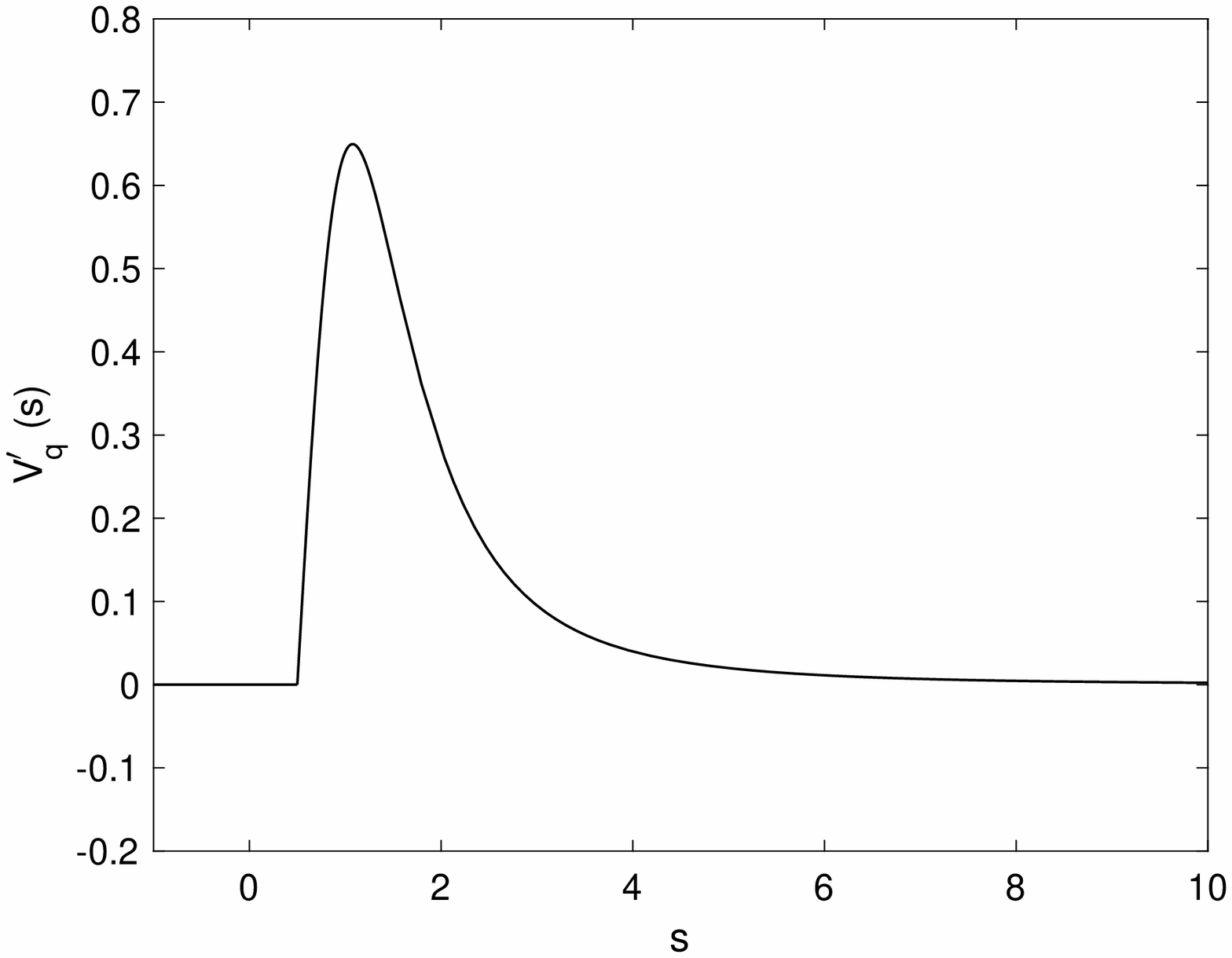}
	\caption{Function $V_q$ and its derivative for $V^{\max}=1$ and $d_S=0.5$}\label{fig: optVel}
\end{figure}

Now, suppose that, given some fixed parameter $h>0$, there exists a globally defined solution $z:\R\to\R$ of the scalar differential equation
\begin{equation}\label{eq: DDE}
z^{\prime\prime}(t)=h^2\,V(z(t-1)-z(t))+h\,z^{\prime}(t)
\end{equation}
with constant delay. Then a straightforward calculation yields that the family $\lbrace x_j\rbrace_{j\in\mathbb{Z}}$ of real-valued functions $x_j:\R\to\R$ defined by
\begin{equation}\label{eq: definition_of_wavefront_solutions}
x_j(t):=z\left(-\tfrac{1}{h}\,t-j\right),\qquad t\in\R,
\end{equation}
satisfies Eq. \eqref{eq: traffic_model}. Furthermore, by claiming $z^\prime(t)<0$ for all $t\in\R$, we obtain a solution of the traffic model \eqref{eq: traffic_model} which is characterized by the property that each driver acts in the same way as the driver of the car in front, but probably some time units later. Thus, for each parameter $h>0$ a strictly decreasing global solution of the delay differential equation \eqref{eq: DDE}  forms a particular wavefront solution of the proposed traffic model \eqref{eq: traffic_model}.

\begin{remark}
	1. Observe that a globally defined but not strictly monotonically decreasing solution $z$ of Eq. \eqref{eq: DDE} may generally result in a physically non-reasonable solution of the traffic model \eqref{eq: traffic_model}. For instance, assuming $z^\prime(t)>0$ for all $t\in\R$, the wavefront ansatz \eqref{eq: definition_of_wavefront_solutions} leads to
	\begin{equation*}
	x_{j+1}(t)=z\left(-\tfrac{1}{h}t-(j+1)\right)<z(-\tfrac{1}{h}t-j)=x_j(t)
	\end{equation*}
	as $t\in\R$. But in our setting the car with the number $j+1$ moves in front of the car with the number $j$ along the real line.
	
	2.  The wavefront ansatz \eqref{eq: definition_of_wavefront_solutions} would also work in the situation of a negative parameter $h<0$ involved in the delay differential equation \eqref{eq: DDE}. But again we would obtain a non-reasonable solution as it would result in
	\begin{equation*}
	x^\prime_j(t)=\frac{d}{dt}z\left(-\tfrac{1}{h}t-j\right)=\underbrace{\left(-\tfrac{1}{h}\right)}_{>0}\,\underbrace{z^\prime\left(-\tfrac{1}{h}t-j\right)}_{<0}<0.
	\end{equation*}
	Therefore, the cars would move from the right to left hand-side, in contrast to our assumption that the cars move from the left to the right-hand side.
\end{remark}

The main purpose of this paper is an analytical study of the delay differential equation \eqref{eq: DDE} for parameter $h>0$ with respect to the existence and local stability of so-called \textit{quasi-stationary solutions}, that is, solutions whose first derivative is constant.
In the case of a negative derivative, such a solution leads to a \emph{wavefront solution with constant-speed} of the traffic model \eqref{eq: traffic_model} where all cars are uniformly spaced on the line and move with the same velocity for all time. In detail, using elementary arguments, we will show that all but one of these solutions are located on two branches parametrized by the involved parameter $h>0$. Furthermore, we will prove that along the one branch all the wavefront solutions with constant-speed are unstable, whereas along the other branch some of those may be stable but not asymptotically stable. This will be done by applying, on the one hand, the principle of linearized instability, and on the other hand by employing a center manifold reduction as the linearization along a wavefront solution with constant-speed has always a zero eigenvalue.

The traffic model introduced above is a modification of a well-known car-following model describing the dynamics of $N\in\mathbb{N}$ cars moving on a circular single lane road with some fixed circumference of length $L>0$. The last mentioned model was introduced by Bando et al. in \cite{Bando1994, Bando1995b}, and then the original model as well as different modifications were extensively studied during the last twenty years. However, as in this work we will neither discuss the dynamical behavior of the model \eqref{eq: traffic_model} in the main, nor the obtained results from the traffic flow point of view, we refrain from a deeper discussion on related car-following models and results from the traffic flow theory. In particular, such a discussion would exceed the scope of this paper. But all these issues will be addressed in a later work \cite{Gasser2016} which is in progress.

The rest of this paper is organized as follows. The next section contains some preliminaries. Here, we rewrite Eq. \eqref{eq: DDE} in the more abstract form of a so-called \emph{retarded functional differential equation} and discuss some basic facts. Section \ref{sec: existence} deals with the existence of wavefront solutions with constant-speed, whereas in Section 
\ref{sec: linearization} we examine the linearization and its spectral properties along some fixed wavefront solution with constant-speed. Section \ref{sec: stability} is devoted to the study of the local stability properties of wavefront solutions with constant-speed, and in the final section we close this work with some numerical examples.
\section{Preliminaries}
From now on, let $\|\cdot\|_{\R^2}$ denote the Euclidean norm on $\R^2$ and $C$ the Banach space of all continuous functions $\varphi:[-1,0]\to\R^2$ equipped with the usual norm $\|\varphi\|_C=\sup_{-1\leq s\leq 0}\|\varphi(s)\|_{\R^2}$ of uniform convergence. Given $t\in\R$, an interval $I\subset \R$ with $[t-1,t]\subset I$, and some continuous function $w:I\to\R^2$, let the \emph{segment} $w_t\in C$ of $w$ at $t$ be defined by $w_t(s)=w(t+s)$ as $-1\leq s\leq 0$. 

Using the above notation of segments, and 
\begin{equation}\label{eq: definition reduction}
w(t):=\begin{pmatrix}
z(t)\\z^\prime(t)
\end{pmatrix}
\end{equation}
as the new state variable, Eq. \eqref{eq: DDE} for the special wavefront solutions of the traffic model takes the more convenient equivalent form
\begin{equation}\label{eq: DDE-canonical}
w^\prime(t)=f(w_t)
\end{equation}
where the right-hand side is defined by
\begin{equation}\label{eq: definition-F}
f:C\ni\varphi \mapsto \begin{pmatrix}
\varphi_2(0)\\ 
h^2\, V\left(\varphi_1(-1)-\varphi_1(0)\right)+h\, \varphi_2(0)
\end{pmatrix}\in\R^2.
\end{equation}
We have  $f(0)=0\in\R^2$ due to assumption (OVF 3), and the map $f$ is invariant with respect to translations into the direction of the constant function
\begin{equation}\label{eq: center-direction}
\hat{e}_1:[-1,0]\ni t\mapsto \begin{pmatrix}
1\\0
\end{pmatrix}\in\R^2
\end{equation} 
since apparently
\begin{equation}\label{eq: translation_invariance}
f(\varphi+k\,\hat{e}_1)=f(\varphi)
\end{equation}
for all $\varphi\in C$ and all $k\in\R$. Further, observe that $f$ is at least $C^1$-smooth. Indeed, introducing the two evaluation operators
\begin{equation*}
\mathrm{ev}_{0}:C\ni \varphi\mapsto \varphi(0)\in\R^2\qquad\text{and}\qquad \mathrm{ev}_{-1}:C\ni\varphi\mapsto\varphi(-1)\in\R^2,
\end{equation*} 
which both are continuous and linear,
and the map
\begin{equation*}
G:\R\ni\begin{pmatrix}
v_1\\v_2\\v_3\\v_4
\end{pmatrix}\mapsto \begin{pmatrix}
v_2\\
h^2\, V(v_3-v_1)+h\,v_2
\end{pmatrix}\in\R^2
\end{equation*}
which, in view of assumption (OVF 4), is continuously differentiable, the map $F$ defined by \eqref{eq: definition-F} may be written as the composition
\begin{equation*}
f=G\circ(\mathrm{ev}_0\times \mathrm{ev}_{-1})
\end{equation*}
of $C^1$-smooth maps. Hence, $f$ is continuously differentiable, and thus particularly satisfies a local Lipschitz-condition at each $\varphi\in C$. 

Under a solution of Eq. \eqref{eq: DDE-canonical} we understand either a continuously differentiable function $w:\R\to\R^2$ satisfying Eq. \eqref{eq: DDE-canonical} for all $t\in\R$, or a continuous function $w:[t_0-1,t_+)\to\R^2$, $t_0<t_+$, such that $w$ is continuously differentiable for all $t_0<t<t_+$ and satisfies Eq. \eqref{eq: DDE-canonical} as $t_0<t<t_+$.  For instance, combining $f(0)=0$ and the translation invariance \eqref{eq: translation_invariance}, we immediately see that for each real $d$ the constant function
\begin{equation*}
w^{0,d}:\R\ni t\mapsto \begin{pmatrix}
d\\0
\end{pmatrix}\in\R^2
\end{equation*}
forms a globally defined solution of Eq. \eqref{eq: DDE-canonical}. Moreover, if $w$ is another solution of Eq. \eqref{eq: DDE-canonical} then the restriction of the sum $w+w^{0, d}$ to the domain of $w$ is a solution of Eq. \eqref{eq: DDE-canonical} as well. In fact, using Eq. \eqref{eq: translation_invariance} reflecting the translation invariance we get
\begin{equation*}
\left(w+w^{0,d}\right)^\prime(t)
=w^{\prime}(t)
=f(w_t)=f\left(w_t+d\,\hat{e}_1\right)=f\left(w_t+w^{0,d}_t\right)
\end{equation*}
for all relevant $t$.
\begin{remark}\label{rem: zero speed}
	Note that for each $d\in\R$ the solution $w^{0, d}$ of Eq. \eqref{eq: DDE-canonical} leads to a physically non-reasonable solution of the traffic model given by Eq. \eqref{eq: traffic_model}. Indeed, the positions of all cars collapse to a single point $d$ on the road.
\end{remark}

But Eq. \eqref{eq: DDE-canonical} has much more solutions than those of type $w^{0, d}$ as we shall show next.
\begin{proposition}
	Each $\varphi\in C$ uniquely defines a solution $w^{\varphi}:[-1,\infty)\to\R^2$ of Eq. \eqref{eq: DDE-canonical} with $w_0^{\varphi}=\varphi$.	
\end{proposition}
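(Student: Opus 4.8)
The plan is to argue by the classical \emph{method of steps}, which turns Eq. \eqref{eq: DDE-canonical} into a succession of ordinary initial value problems on the unit intervals $[0,1]$, $[1,2]$, $[2,3]$, \dots. Fix $\varphi\in C$ and write $w=(z,z^\prime)$ as in \eqref{eq: definition reduction}. On $[0,1]$ the delayed value $z(t-1)$ coincides with the prescribed continuous function $t\mapsto\varphi_1(t-1)$, so a solution of Eq. \eqref{eq: DDE-canonical} with $w_0=\varphi$ must solve there the non-autonomous ordinary differential equation
\begin{equation*}
z^{\prime\prime}(t)=h^2\,V\big(\varphi_1(t-1)-z(t)\big)+h\,z^\prime(t)
\end{equation*}
with initial data $z(0)=\varphi_1(0)$, $z^\prime(0)=\varphi_2(0)$. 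Written as a first-order system for $w$, its right-hand side is continuous in $t$ and, by \textbf{(OVF 4)} (which in particular gives $V\in C^1$), continuously differentiable — hence locally Lipschitz — in the state variable. Thus the Picard–Lindelöf theorem provides a unique non-continuable solution of this initial value problem, and extending it to $[-1,0]$ by $\varphi$ yields a solution of Eq. \eqref{eq: DDE-canonical} in the sense fixed above, the value $w(0)=\varphi(0)$ ensuring continuity at $t=0$.

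Next I would rule out a finite-time blow-up, so that this local solution actually fills all of $[0,1]$. On any subinterval $[0,\tau]$ of its interval of existence, the bounds $0\le V\le V^{\max}$ from \textbf{(OVF 1)}--\textbf{(OVF 2)} give
\begin{equation*}
|z^\prime(t)|\le|\varphi_2(0)|+h^2\,V^{\max}+h\int_0^t|z^\prime(s)|\,ds,
\end{equation*}
whence Gronwall's lemma bounds $|z^\prime|$, and then a further integration bounds $|z|$, by a constant depending only on $\varphi$ and $h$; consequently $\|w(t)\|_{\R^2}$ cannot escape to infinity and the solution extends to the whole interval $[0,1]$. Repeating the construction on $[1,2]$ with $z|_{[0,1]}$ now serving as the history of the delayed term, then on $[2,3]$, and so on, one obtains by induction a unique solution on $[-1,n]$ for every $n\in\N$; by uniqueness at each step these restrictions are mutually consistent, so their union is the desired solution $w^\varphi:[-1,\infty)\to\R^2$ with $w_0^\varphi=\varphi$, and it is unique.

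Continuous differentiability of $w^\varphi$ on $(0,\infty)$ is then automatic: $t\mapsto w^\varphi_t\in C$ is continuous and $f$ is continuous, so $(w^\varphi)^\prime(t)=f(w^\varphi_t)$ is continuous, and since $z(t-1)$ is continuous across the integer points $z^{\prime\prime}$ is in fact continuous there as well. Alternatively, one may simply invoke the general existence, uniqueness and continuation theory for retarded functional differential equations with continuous, locally Lipschitz right-hand side — a hypothesis already verified for $f$ in the preliminaries — and then exclude a finite maximal time of existence by exactly the same Gronwall estimate. Either way, the only genuine point of the proof is this a priori bound; it rests entirely on the boundedness of $V$ and on the fact that the remaining dependence of $f$ on the state is affine linear, so everything else is routine.
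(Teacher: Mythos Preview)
Your argument is correct; it simply takes a different route from the paper. The paper invokes the abstract existence and continuation theory for retarded functional differential equations: local Lipschitz continuity of $f$ gives a maximal solution on $[-1,t_+(\varphi))$, and then a linear growth estimate $\|f(\psi)\|_{\R^2}\le K\|\psi\|_C$ (obtained via the mean-value inequality $V(s)\le V'(b)\,|s|$) feeds Gronwall to bound $\|w_t\|_C$, after which the standard continuation criterion forces $t_+(\varphi)=\infty$. You instead run the method of steps, reducing each unit interval to a non-autonomous ODE and relying directly on the pointwise bound $0\le V\le V^{\max}$ for the no-blow-up estimate. Your a priori bound is in fact the sharper and more elementary one: boundedness of $V$ is all that is needed, and it avoids the slightly more delicate linear-growth computation the paper performs. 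Conversely, the paper's approach has the advantage of fitting seamlessly into the semiflow framework used throughout the rest of the article and of not singling out the unit intervals. Your final paragraph already acknowledges the paper's alternative, so you have essentially covered both viewpoints.
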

\begin{proof}
	1. Recall that  the map $f$ is locally Lipschitz-continuous. Hence, following the basic existence theory for delay differential equations as, for instance, contained in Hale and Verduyn Lunel \cite{Hale1993} or in  Diekmann et al. \cite{Diekmann1995}, we see that for each initial function $\varphi\in C$ there exist a uniquely determined constant $t_+(\varphi)>0$ and an in the forward time-direction non-continuable solution $w^{\varphi}:[-1,t_+(\varphi))\to\R^2$ of Eq. \eqref{eq: DDE-canonical} with $w_0^{\varphi}=\varphi$. So, the only point remaining to prove is that $t_+(\varphi)=\infty$ for all $\varphi\in C$.
	
	2. Observe that for all $\psi\in C$ we have
	\begin{equation*}
	\begin{aligned}
	\|f(\psi)\|_{\R^2}&=\left\|\begin{pmatrix}
	\psi_2(0)\\h^2\,V(\psi_1(-1)-\psi_1(0))+h\,\psi_2(0)
	\end{pmatrix}\right\|_{\R^2}\\
	&\leq \left\|\begin{pmatrix}
	\psi_2(0)\\h\,\psi_2(0)
	\end{pmatrix}\right\|_{\R^2}+\left\|\begin{pmatrix}
	0\\h^2\,V(\psi_1(-1)-\psi_1(0))
	\end{pmatrix}\right\|_{\R^2}\\
	&=\sqrt{1+h^2}\,|\psi_2(0)|+h^2\,V(\psi_1(-1)-\psi_1(0))\\
	&\leq \sqrt{1+h^2}\|\psi\|_{C}+h^2\,\max_{\xi\in\R}V^{\prime}(\xi)\,(\psi_1(-1)-\psi_1(0))\\
	&\leq \sqrt{1+h^2}\|\psi\|_{C}+h^2\,V^{\prime}(b)\,(\|\psi\|_C+\|\psi\|_C)\\
	&=\left(\sqrt{1+h^2}+2h^2\,V^\prime(b)\right)\|\psi\|_C
	\end{aligned}
	\end{equation*}
	with constant $b$ from assumption (OVF 4). 
	
	3. Now, let $\varphi\in C$ be given. Set $w=w^{\varphi}:[-1,t_+(\varphi))\to\R^2$ for the associated maximal solution of Eq. \eqref{eq: DDE-canonical} due to the first part. Then, integrating Eq. \eqref{eq: DDE-canonical} and using the triangle inequality along with the part above, we obtain
	\begin{equation*}
	\begin{aligned}
	\|w(t)\|_{\R^2}&=\left\|w(0)+\int_{0}^{t}f(w_s)\,ds\right\|_{\R^2}\\
	&\leq \|w(0)\|_{\R^2}+\int_0^{t}\|f(w_s)\|_{\R^2}ds\\
	&\leq \|\varphi\|_C+\int_0^tK\|w_s\|_{C}ds\\
	&=\|\varphi\|_C+K\int_0^t\|w_s\|_{C}ds
	\end{aligned}
	\end{equation*}
	for all $0\leq t<t_+(\varphi)$ where the constant $K>0$ is given by
	\begin{equation*}
	K:=\sqrt{1+h^2}+2h^2\,V^\prime(b).
	\end{equation*}
	Furthermore, a straightforward argument now shows that
	\begin{equation*}
	\|w_t\|_{C}\leq \|\varphi\|_C+K\int_{0}^{t}\|w_s\|_C ds
	\end{equation*}
	as long as $0\leq t<t_+(\varphi)$. Consequently, applying the Gronwall's inequality, we see that
	\begin{equation*}
	\|w_t\|_C\leq \|\varphi\|_C e^{Kt}
	\end{equation*}
	for all $0\leq t<t_+(\varphi)$. 
	
	4.  Observe that, in view of the second part of the proof, $f$ maps bounded sets of $C$ into relative compact sets of $\R^2$. Therefore, basic continuation results -- see, for instance Hale and Verduyn Lunel \cite[Theorem 3.2 in Chapter 2]{Hale1993} -- for delay differential equations show that each (in the forward time-direction) non-continuable solution of Eq. \eqref{eq: DDE-canonical} has to leave any closed bounded subset of $C$ in finite time. Now assume that we would have $t_+(\varphi)<\infty$. Then, using the last part, we would see that the orbit $W:=\lbrace w_t\mid 0\leq t<t_{+}(\varphi)\rbrace\subset C$ of solution $w$ is bounded in $C$, and $w$ would clearly not leave the closed bounded set $\overline{W}\subset C$, which is a contradiction. Consequently, $t_{+}(\varphi)=\infty$ and this finishes the proof.
\end{proof}

All the solutions of Eq. \eqref{eq: DDE-canonical} depend continuously on the initial values $\varphi\in C$: Given $\varphi\in C$, $T>0$, and $\varepsilon>0$ there exists a constant $\delta>0$ such that for all $\psi\in C$ with $\|\varphi-\psi\|_{C}<\delta$ and all $0\leq t\leq T$ we have
\begin{equation*}
\|w^{\varphi}(t)-w^{\psi}(t)\|_{\R^2}<\varepsilon.
\end{equation*}
In particular, the segments $w_t^{\varphi}$, $\varphi\in C$ and $0\leq t<\infty$, induce a continuous semiflow on the state space $C$, namely, $F:[0,\infty)\times C\to C$ with
\begin{equation*}
F(t,\varphi):=w_t^{\varphi}
\end{equation*}
for all $t\geq 0$ and $\varphi\in C$.

\section{Existence of wavefront solutions with constant-speed}\label{sec: existence}
Suppose that there exists some $c> 0$ with
\begin{equation}\label{eq: constant-speed condition}
h\, V(c)=c.
\end{equation}
Then, for each $d\in\R$,
\begin{equation}\label{eq: constant-speed solution}
w^{c,d}(t):=\begin{pmatrix}
-c\,t+d\\-c
\end{pmatrix},\qquad t\in\R,
\end{equation}
forms a solution of Eq. \eqref{eq: DDE-canonical}. Indeed, we have
\begin{equation*}
\left(w^{c,d}\right)^{\prime}(t)=\begin{pmatrix}
-c\\0
\end{pmatrix}
\end{equation*}
and
\begin{equation*}
f\left(w^{c,d}_{t}\right)=\begin{pmatrix}
-c\\h^{2}V(-c(t-1)+d-(-ct+d))-hc
\end{pmatrix}=\begin{pmatrix}
-c\\h^{2}V(c)-hc
\end{pmatrix}=\begin{pmatrix}
-c\\0
\end{pmatrix}.
\end{equation*}
With respect to the traffic model described by Eq. \eqref{eq: traffic_model}, such a solution $w^{c, d}$ of Eq. \eqref{eq: DDE} leads to
\begin{equation*}
x_j(t)=z\left(-\frac{1}{h}t-j\right)=-c\left(-\frac{1}{h}-j\right)+d=\frac{c}{h}t+c\,j+d
\end{equation*}
for all $t\in\R$ and all $j\in\mathbb{Z}$, and thus to the dynamical behavior where all cars are uniformly spaced on the road with distance $c$ to the car in front and moving with the same constant velocity $c/h$. For that reason, we should refer to a solutions of type $w^{c,d}$ with $d\in\R$ and $c>0$ satisfying Eq. \eqref{eq: constant-speed condition}  as a {\it wavefront solution with constant-speed}. Furthermore, in view of the translation invariance \eqref{eq: translation_invariance},  it is also appropriate not to distinguish between any two solutions $w^{c,d_1}$ and $w^{c,d_2}$ of Eq. \eqref{eq: DDE-canonical}. Having this in mind, in the following we identify any two solutions $w^{c_1,d_1}$ and $w^{c_2,d_2}$ as one and the same wavefront solution with constant speed as long as $c_1=c_2$.
\begin{remark}
	1. Regardless of the value $h>0$, $c=0$ always satisfies condition \eqref{eq: constant-speed condition}. But that leads to the solution $w^{0, d}$ which was already classified as a non-physical solution of the traffic model given by Eq. \eqref{eq: traffic_model}  (compare Remark \ref{rem: zero speed}).
	
	2. Observe that Eq. \eqref{eq: constant-speed condition} does not have any solutions $c<0$ at all since both $h$ and $V$ are positive by assumptions. Moreover, Eq. \eqref{eq: DDE-canonical} is not only a sufficient but also a necessary condition for the existence of a solution $w$ of Eq. \eqref{eq: DDE-canonical} where the second component is constant.
\end{remark}

Observe that in our discussion above we regarded the parameter $h>0$ as fixed and looked for an appropriate choice of $c>0$ such that Eq. \eqref{eq: constant-speed condition} is satisfied, in order to obtain a wavefront solution with constant-speed. On the other hand, given any $c>0$ with $V(c)\not=0$, the parameter value $h:=c/V(c)>0$ trivially fulfills condition \eqref{eq: constant-speed condition}. In this way, we find a ``branch'' $c\mapsto h(c)=c/V(c)$ of wavefront solutions with constant-speed. However, for the study carried out in this work it is more convenient to parametrize the wavefront solutions with constant-speed by the parameter $h>0$ involved in the right-hand side of Eq. \eqref{eq: DDE}. Therefore, our next goal is to analyze the existence of wavefront solutions with constant-speed in dependence of parameter $h>0$. Our first result in this direction proves that for each $h>0$ the number of such solutions $c>0$ is bounded from above. To be more precisely, the following holds.

\begin{proposition}\label{prop: bound for constant speed}
	For each parameter $h>0$ there are at most two reals $c>0$ satisfying Eq. \eqref{eq: constant-speed condition}.
\end{proposition}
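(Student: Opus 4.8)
The plan is to turn the question into a count of the zeros in $(0,\infty)$ of the function
\[
g:\R\ni c\mapsto h\,V(c)-c\in\R ,
\]
which vanishes at $c$ precisely when $c$ satisfies Eq.~\eqref{eq: constant-speed condition}. First I would localize the positive zeros. If $0<c\le d_S$ then $V(c)=0$ by (OVF 3), so $g(c)=-c<0$; hence every positive zero of $g$ lies in $(d_S,\infty)$, where $g$ is in addition twice continuously differentiable by (OVF 4). Next I record three facts about $g$ on $[d_S,\infty)$: the left endpoint value $g(d_S)=h\,V(d_S)-d_S=-d_S\le 0$ (again by (OVF 3)); the behaviour at infinity $g(c)\le h\,V^{\max}-c\to-\infty$ as $c\to\infty$ (by (OVF 2)); and, since $V$ is $C^1$ on $\R$ and vanishes identically on $(-\infty,d_S]$, the value $V'(d_S)=0$, so that $g'(d_S)=-1<0$.

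The structural heart of the argument is that $g'(c)=h\,V'(c)-1$ inherits the shape of $V'$ prescribed by (OVF 4): $g'$ is strictly increasing on $(d_S,b)$ and strictly decreasing on $(b,\infty)$, hence attains a strict global maximum at $b$ over $(d_S,\infty)$. From this I would deduce that the set $\{c\in(d_S,\infty):g'(c)>0\}$ is an interval. It is bounded above, for otherwise $g$ would be eventually strictly increasing, contradicting $g(c)\to-\infty$; and because $g'(d_S)=-1<0$ and $g'$ is continuous, this interval stays away from $d_S$. Consequently there are reals $d_S<p\le q<\infty$ such that $g$ is strictly decreasing on $(d_S,p)$, strictly increasing on $(p,q)$ (the middle interval being empty, and $g$ simply strictly decreasing throughout, when $g'\le 0$ on all of $(d_S,\infty)$), and strictly decreasing on $(q,\infty)$.

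Given this decreasing--increasing--decreasing shape the conclusion is routine. On $(d_S,p]$ we have $g<g(d_S)\le 0$, so there are no zeros there. The function $g$ then climbs to the value $g(q)$ and afterwards drops monotonically to $-\infty$: if $g(q)\le 0$ then $g$ has at most one zero on $(p,\infty)$ (namely $q$, and only when $g(q)=0$), whereas if $g(q)>0$ the intermediate value theorem combined with strict monotonicity on each side of $q$ yields exactly one zero in $(p,q)$ and exactly one in $(q,\infty)$. In every case $g$ has at most two zeros in $(d_S,\infty)$, and hence at most two reals $c>0$ satisfy Eq.~\eqref{eq: constant-speed condition}. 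The only step requiring genuine care is the middle one --- verifying that $\{g'>0\}$ is an interval and reading off the three-piece monotonicity of $g$, i.e.\ exploiting assumption (OVF 4) correctly; the localization and the final count are elementary bookkeeping with the intermediate value theorem.
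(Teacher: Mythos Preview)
Your argument is correct. It differs from the paper's proof in the mechanism used to bound the number of zeros of $g(c)=hV(c)-c$. The paper observes that $g'(c)=hV'(c)-1$ has at most two zeros in $(0,\infty)$ (this is where (OVF~4) enters), then invokes Rolle's theorem to conclude that $g$ has at most three zeros on $[0,\infty)$; since $g(0)=0$ is one of them, at most two remain in $(0,\infty)$. The paper handles the cases $d_S=0$ and $d_S>0$ separately in verifying the bound on the zeros of $g'$.

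You instead read off from (OVF~4) that $g'$ is unimodal on $(d_S,\infty)$, so $\{g'>0\}$ is a single interval, which yields a three-piece decreasing--increasing--decreasing profile for $g$; you then count zeros piece by piece using $g(d_S)\le 0$, $g'(d_S)=-1$, and $g(c)\to-\infty$. This is a direct shape analysis rather than a Rolle count. It avoids the case split on $d_S$ (your use of $V'(d_S)=0$, which follows from $V\in C^1$ and $V\equiv 0$ on $(-\infty,d_S]$, treats both cases uniformly), and it makes the trichotomy ``no / one / two positive solutions'' visible via the sign of $g(q)$ --- information the paper develops only later, in Propositions~\ref{prop: no wavefront} and~\ref{prop: position of two constant}. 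The paper's Rolle argument is shorter; your approach is more self-contained and anticipates the finer structure used downstream.
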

\begin{proof}
	Given $h>0$, consider the function $g:[0,\infty)\ni c\mapsto hV(c)-c\in\R$. By assumptions, $g$ is continuously differentiable and $g(0)=0$. Moreover, $c=0$ is clearly the only zero of $g$ in the interval $[0,d_{S}]$, which collapse to the one-point set $\lbrace 0\rbrace$ in the case $d_{S}=0$.  However, in order to see the claim, it obviously suffices to prove that $g$ has at most two zeros in $(d_{S},\infty)$.  We do that for the two cases $d_S=0$ and $d_S>0$ separately.
	
	1. \textit{Case $d_S=0$.} In this situation, condition (OVF 4) implies that $g^{\prime}(c)=hV^\prime(c)-1$ has at most two zeros in $(0,\infty)$. Indeed, $V^\prime$ is strictly increasing in $(0,b)$ and strictly decreasing in $(b,\infty)$ such that, for fixed $h>0$, the equation $V^\prime(c)=1/h$ clearly has at most two solutions $c>0$. Applying Rolle's theorem, we conclude that $g$ has at most three zeros in $[0,\infty)$. As $g(0)=0$ this proves the assertion in case $d_S=0$. 
	
	2. \textit{Case $d_S>0$.} Under the additional condition $d_S>0$, and so $V(c)=0$ as $0\leq c\leq d_S$,  we have $g(c)=-c$ and $g^\prime(c)=-1$ for all $0\leq c\leq d_S$. Further, by assumption (OVF 4), we see (similarly to the case above) that $g^\prime$ has at most two zeros in $(d_S,\infty)$. Hence, all in all, $g^\prime$ clearly has at most two zero in $(0,\infty)$. Using Rolle's theorem, we conclude that there are at most three different zeros of $g$ in $[0,\infty)$. In view of $g(0)=0$, this finishes the proof.
\end{proof}

But not for each parameter $h>0$ there is some real $c>0$ such that Eq. \eqref{eq: constant-speed condition} is satisfied as we show next.

\begin{proposition}\label{prop: no wavefront}
	Given $h>0$, suppose that $V^{\prime}(c)<\frac{1}{h}$ for all $c>0$. Then there is no real $c>0$ satisfying Eq. \eqref{eq: constant-speed condition}.
\end{proposition}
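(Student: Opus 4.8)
The plan is to reuse the auxiliary function from the proof of Proposition \ref{prop: bound for constant speed}. Set $g:[0,\infty)\to\R$, $g(c)=hV(c)-c$. By assumption (OVF 3) we have $V(0)=0$, hence $g(0)=0$, and a solution $c>0$ of Eq. \eqref{eq: constant-speed condition} is nothing but a positive zero of $g$. So it suffices to show that $g$ has no zero in $(0,\infty)$ under the stated hypothesis.

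Next I would differentiate: by assumption (OVF 4) the map $V$, and therefore $g$, is $C^1$-smooth on $[0,\infty)$, with $g^\prime(c)=hV^\prime(c)-1$ for all $c\geq 0$. The hypothesis $V^\prime(c)<\tfrac1h$ for all $c>0$ then gives $g^\prime(c)=h\bigl(V^\prime(c)-\tfrac1h\bigr)<0$ for every $c>0$. Consequently $g$ is continuous on $[0,\infty)$ and strictly decreasing on $(0,\infty)$, hence strictly decreasing on all of $[0,\infty)$.

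Combining this with $g(0)=0$ yields $g(c)<g(0)=0$, that is $hV(c)<c$, for every $c>0$; in particular no $c>0$ can satisfy $hV(c)=c$. This proves the assertion. (If one prefers to avoid invoking monotonicity of a $C^1$ function directly, the same conclusion follows from the mean value theorem applied on $[0,c]$: $g(c)-g(0)=g^\prime(\xi)\,c<0$ for some $\xi\in(0,c)$.)

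I do not expect any genuine obstacle here — the argument is essentially a one-line monotonicity observation. The only points deserving a word of care are that $g$ is continuous up to the endpoint $c=0$ (guaranteed by the $C^1$-smoothness in (OVF 4), so that $g(0)=0$ can actually be used), and, in the case $d_S>0$, that the hypothesis is perfectly compatible with $V^\prime\equiv 0$ on $(0,d_S]$ there, so that the negativity of $g^\prime$ on the whole half-line $(0,\infty)$ is not affected.
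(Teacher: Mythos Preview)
Your proof is correct and follows essentially the same approach as the paper: the paper compares $g_1(c)=V(c)$ and $g_2(c)=c/h$, noting $g_1(0)=g_2(0)$ and $g_1'(c)<g_2'(c)$ for $c>0$ to conclude $g_1(c)<g_2(c)$, which is exactly your argument rewritten in terms of the single difference function $g=h(g_1-g_2)$. The only cosmetic distinction is that you reuse the function $g$ from Proposition~\ref{prop: bound for constant speed} rather than introducing the pair $g_1,g_2$, but the underlying monotonicity observation is identical.
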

\begin{proof}
	Set $g_{1}(c):=V(c)$ and $g_{2}(c):=c/h$ as $c\geq 0$. By assumptions, we have $g_{1}(0)=g_{2}(0)$ and $g_{1}^{\prime}(c)< g_{2}^{\prime}(c)$ for all $c>0$. It follows that $g_{2}(c)>g_{1}(c)$ and so $c=h\,g_{2}(c)>h\,g_{1}(c)=h\,V(c)$ as $c>0$. This proves the claim.
\end{proof}
Recall that by assumption (OVF 4) we have $V^{\prime}(b)=\sup_{c\geq 0}V^{\prime}(c)$. Consequently, the last result implies that for each parameter $h>0$ with $V^{\prime}(b)<1/h$ Eq. \eqref{eq: DDE-canonical} does not have any wavefront solutions with constant-speed.

\begin{remark}
	In the situation $d_{S}>0$ the statement of Proposition \ref{prop: no wavefront} is also true under the somewhat weaker assumption $V^{\prime}(c)\leq \tfrac{1}{h}$ as $c>d_{S}$. Indeed, in this situation we have $g_{1}(0)=g_{2}(0)$, $0=g^{\prime}_{1}(c)<g_{2}^{\prime}(c)$ as $0<c\leq d_{S}$, and $g^{\prime}_{1}(c)\leq g_{2}^{\prime}(c)$ as $c>d_{S}$. Hence, $g_{1}(c)<g_{2}(c)$ for all $c>0$, and the assertion follows.
\end{remark}

Provided Eq. \eqref{eq: DDE-canonical} has a wavefront solution $w^{c, 0}$ with constant-speed and some additional conditions are satisfied, our next result ensures the existence of another wavefront solution $w^{\hat{c}, 0}$, $\hat{c}\not=c$, with constant-speed.
\begin{proposition}\label{prop: position of two constant}
	Let parameter $h>0$ be given.
	\begin{itemize}
		\item[(i)] Suppose that there is some $c_{1}>0$ satisfying Eq. \eqref{eq: constant-speed condition} and $V^{\prime}(c_{1})>1/h$. Then there is some $c_{2}>c_{1}$ such that Eq. \eqref{eq: constant-speed condition} holds.
		\item[(ii)] Suppose that $d_{S}>0$ and that there is some $c_{2}>0$ satisfying Eq. \eqref{eq: constant-speed condition} and $V^{\prime}(c_{2})<1/h$. Then there is some $0<c_1<c_{2}$ such that Eq. \eqref{eq: constant-speed condition} holds.
	\end{itemize}
\end{proposition}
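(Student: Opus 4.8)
The plan is to work with the auxiliary function $g:[0,\infty)\ni c\mapsto hV(c)-c\in\R$ already introduced in the proof of Proposition \ref{prop: bound for constant speed}, and to exploit the fact that the hypothesis $hV(c_i)=c_i$ says precisely $g(c_i)=0$, while $V'(c_i)\gtrless 1/h$ translates into $g'(c_i)\gtrless 0$. The overall strategy in both parts is a sign-change (intermediate value) argument: I will locate a point where $g$ has a sign opposite to the sign it picks up just to one side of $c_i$, and then invoke continuity of $g$ together with the boundedness/limit behavior of $V$.

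For part (i): from $g(c_1)=0$ and $g'(c_1)=hV'(c_1)-1>0$ I conclude that $g(c)>0$ for $c$ slightly larger than $c_1$. On the other hand, by (OVF 2) we have $V(c)\le V^{\max}$ for all $c$, so $g(c)=hV(c)-c\le hV^{\max}-c\to-\infty$ as $c\to\infty$; in particular $g(c)<0$ for all sufficiently large $c$. Applying the intermediate value theorem to the continuous function $g$ on an interval $[c_1+\delta,\,C]$ with $g(c_1+\delta)>0$ and $g(C)<0$ yields a zero $c_2\in(c_1,\infty)$, which by construction satisfies $c_2>c_1$ and $hV(c_2)=c_2$, i.e. Eq. \eqref{eq: constant-speed condition}.

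For part (ii): now $d_S>0$, so $g(c)=-c<0$ for $0<c\le d_S$, in particular $g$ is strictly negative immediately to the right of $0$. From $g(c_2)=0$ and $g'(c_2)=hV'(c_2)-1<0$ I get that $g$ is \emph{positive} just to the left of $c_2$, say $g(c_2-\delta)>0$. Since $c_2-\delta>d_S$ necessarily (as $g\le 0$ on $[0,d_S]$), the intermediate value theorem applied to $g$ on $[d_S,\,c_2-\delta]$ — where $g(d_S)=-d_S<0$ and $g(c_2-\delta)>0$ — produces a zero $c_1\in(d_S,c_2-\delta)\subset(0,c_2)$, giving the desired $c_1$ with $0<c_1<c_2$ and $hV(c_1)=c_1$.

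The only genuinely delicate point is making rigorous the step ``$g'(c_i)>0$ implies $g>0$ just past $c_i$'' (and its mirror image): this is just the definition of the derivative, since $g(c_i+\delta)-g(c_i)=g'(c_i)\delta+o(\delta)$ and $g(c_i)=0$, so for $\delta>0$ small enough the sign of $g(c_i+\delta)$ matches the sign of $g'(c_i)$; I should state this cleanly rather than appeal vaguely to monotonicity, since $g'$ need not keep a constant sign on a whole neighborhood. Everything else is routine: continuity of $g$ comes from (OVF 4), the far-field estimate in (i) comes from (OVF 2), and the behavior on $[0,d_S]$ in (ii) comes from (OVF 3). No use of (OVF 1) beyond what is already baked into $g$ is needed, and Rolle's theorem is not required here — only the intermediate value theorem.
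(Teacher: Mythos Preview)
Your argument is correct and essentially the same as the paper's: the paper works with the difference $g_1-g_2$ where $g_1(c)=V(c)$ and $g_2(c)=c/h$, which is just your function $g$ divided by the positive constant $h$, and then applies the identical sign-change/intermediate value reasoning in both parts. Your extra care in justifying the local sign of $g$ near $c_i$ via the definition of the derivative is a welcome refinement but does not change the overall strategy.
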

\begin{proof}
	1. Consider the continuously differentiable functions $g_{1}$ and $g_{2}$ from the proof of the last statement. Under given conditions of assertion (i), we have $g_{1}(c_{1})=g_{2}(c_{1})$ and $V^{\prime}(c_{1})=g^{\prime}_{1}(c_{1})>g_{2}^{\prime}(c_{1})=1/h$. Thus, $g_{1}(c)-g_{2}(c)>0$ for all $c>c_{1}$ with $c-c_{1}$ sufficiently small. On the other hand, we have $g_{1}(c)-g_{2}(c)\to -\infty$ as $c\to\infty$.
	Thus, the intermediate value theorem yields the existence of some $\xi>c_{1}$ with $g_{1}(\xi)=g_{2}(\xi)$, that is, $h V(\xi)=h\,g_{1}(\xi)=h\,g_{2}(\xi)=\xi$. This proves assertion (i).
	
	2. We consider again the continuously differentiable functions $g_{1}$ and $g_{2}$. By assumptions of assertion (ii), $g_{1}(c_{2})=g_{2}(c_{2})$ and $V^{\prime}(c_{2})=g_{1}^{\prime}(c_{2})<g_{2}^{\prime}(c_{2})=1/h$. Therefore, $(g_{1}-g_{2})(c_{2}-c)>0$ for all sufficiently small $c>0$. On the other hand, we have $(g_{1}-g_{2})(c)=-c/h<0$ for all $0<c\leq d_{S}$. Hence, due to the intermediate value theorem there is some $0<c_{1}<c_{2}$ with $(g_{1}-g_{2})(c_{1})=0$, and this shows the second part of the proposition.
\end{proof}

As an immediate consequence of the last result we get the following corollary.
\begin{corollary}
	Let $V$ satisfy properties (OVF 1) -- (OVF 4) with $d_{S}>0$, and let $h>0$ be given. If there is a unique $c>0$ satisfying Eq. \eqref{eq: constant-speed condition}, then $V^{\prime}(c)=1/h$.
\end{corollary}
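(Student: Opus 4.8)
The plan is to argue by contradiction and reduce everything to Proposition \ref{prop: position of two constant}. Assume that $c>0$ is the unique real satisfying Eq. \eqref{eq: constant-speed condition}, but that $V^{\prime}(c)\neq 1/h$. Then either $V^{\prime}(c)>1/h$ or $V^{\prime}(c)<1/h$, and I would handle these two alternatives separately.

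In the first case, $V^{\prime}(c)>1/h$, I would apply part (i) of Proposition \ref{prop: position of two constant} with $c_{1}:=c$. Its hypotheses are met, since $c$ satisfies Eq. \eqref{eq: constant-speed condition} and $V^{\prime}(c)>1/h$, so it yields some $c_{2}>c$ that also satisfies Eq. \eqref{eq: constant-speed condition}. As $c_{2}\neq c$, this contradicts the assumed uniqueness of $c$. In the second case, $V^{\prime}(c)<1/h$, I would instead invoke part (ii) of the same proposition with $c_{2}:=c$; here the standing hypothesis $d_{S}>0$ of the corollary is exactly what is needed for that part to apply. It produces some $c_{1}$ with $0<c_{1}<c$ satisfying Eq. \eqref{eq: constant-speed condition}, again contradicting uniqueness. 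Hence neither strict inequality can hold, so $V^{\prime}(c)=1/h$, as claimed.

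I do not expect any genuine obstacle: the corollary is just the contrapositive packaging of the two implications in Proposition \ref{prop: position of two constant}. The only point requiring care is bookkeeping of hypotheses — in particular, remembering that part (ii) of the proposition, and thus the second case above, relies on $d_{S}>0$, which is why that assumption is carried in the statement of the corollary, whereas the first case needs no such restriction.
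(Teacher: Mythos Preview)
Your proposal is correct and matches the paper's own treatment: the paper states the corollary as ``an immediate consequence'' of Proposition~\ref{prop: position of two constant} without spelling out a separate proof, and your contrapositive argument invoking parts (i) and (ii) is exactly that immediate consequence made explicit.
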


We use the criterion of the last result to show that there is at most one pair $(c^{*},h^{*})\in(0,\infty)\times (0,\infty)$ of reals satisfying both
\begin{equation}\label{eq: uni_cond}
h^{\star}\,V(c^{\star})=c^{\star}\qquad\text{and}\qquad h^{\star}\, V^{\prime}(c^{\star})=1
\end{equation}
simultaneously.
\begin{proposition}\label{prop: branch_limit}
	Suppose that there are two pairs $(c_1,h_1), (c_2,h_2)\in (0,\infty)\times(0,\infty)$ satisfying conditions \eqref{eq: uni_cond}. Then $h_1=h_2$ and $c_1=c_2$.
\end{proposition}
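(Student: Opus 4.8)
The plan is to argue by contradiction, assuming that $(c_1,h_1)\neq(c_2,h_2)$, and to exploit the two relations in \eqref{eq: uni_cond} to pin down the geometric meaning of such a pair. From $h\,V(c)=c$ and $h\,V'(c)=1$ we get, dividing, that $V(c)/V'(c)=c$, i.e. $c\,V'(c)=V(c)$; equivalently the tangent line to the graph of $V$ at $c$ passes through the origin. So a pair satisfying \eqref{eq: uni_cond} is exactly a point $c>0$ at which the line through the origin is tangent to the graph of $V$ (with $h$ then determined as $h=1/V'(c)=c/V(c)$). Thus it suffices to show there is at most one such $c$, and the two given pairs, if they existed, would force $c_1=c_2$ and hence $h_1=h_2$.

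First I would reduce to the shape of $V$ given by (OVF 1)--(OVF 4). Since $V$ vanishes on $[0,d_S]$ and is positive afterwards, any $c>0$ with $c\,V'(c)=V(c)>0$ must lie in $(d_S,\infty)$, where $V$ is $C^2$. Define $\varphi(c):=c\,V'(c)-V(c)$ on $(d_S,\infty)$; a candidate $c$ is a zero of $\varphi$. Differentiating, $\varphi'(c)=V'(c)+c\,V''(c)-V'(c)=c\,V''(c)$, so $\varphi'$ has the sign of $V''$. By (OVF 4), $V'$ is strictly increasing on $(d_S,b)$ and strictly decreasing on $(b,\infty)$, so $V''>0$ on $(d_S,b)$ and $V''<0$ on $(b,\infty)$; hence $\varphi$ is strictly increasing on $(d_S,b)$ and strictly decreasing on $(b,\infty)$, i.e. $\varphi$ is strictly unimodal with a unique interior maximum at $b$. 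Next I would control the boundary behavior: as $c\downarrow d_S$, $\varphi(c)\to -V(d_S)\le 0$ (and $\le 0$ with equality only when $d_S=0$, where one checks $c=0$ is not in the open interval anyway), while the one-sided derivative forces $\varphi$ to be increasing there; and as $c\to\infty$, $V(c)\to V^{\max}>0$ and $c\,V'(c)$ — using that $V'$ is eventually decreasing and $V$ bounded — tends to a limit in $[0,\infty)$, so $\varphi(c)$ is bounded above but its sign at infinity needs a short argument. A strictly unimodal function on an interval has at most two zeros; but if it had two zeros $c_1<c_2$, then by Rolle there would be a critical point of $\varphi$ strictly between them, which must be $b$, and $\varphi(b)>0$, $\varphi(c_1)=\varphi(c_2)=0$ with $\varphi<0$ to the left of $c_1$ and $\varphi<0$ to the right of $c_2$ — this is consistent, so unimodality alone does not immediately give uniqueness.

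The actual uniqueness therefore needs the extra information that the relevant tangency is with a line through the origin, not an arbitrary horizontal level. Here is the cleaner route I would take, avoiding $\varphi$: for $c>d_S$ set $\psi(c):=V(c)/c$, the slope of the chord from the origin. Then $\psi'(c)=\dfrac{c\,V'(c)-V(c)}{c^2}=\dfrac{\varphi(c)}{c^2}$, so $\psi$ is increasing exactly where $\varphi>0$, i.e. $\psi$ increases on $(d_S,b)$ and decreases on $(b,\infty)$ — $\psi$ is itself strictly unimodal with peak at $b$. A pair satisfying \eqref{eq: uni_cond} corresponds to a point where $V'(c)=\psi(c)$, i.e. where the tangent slope equals the chord slope, which by $\psi'(c)=(V'(c)-\psi(c))/c$ is precisely a critical point of $\psi$. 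Since $\psi$ is strictly unimodal it has exactly one critical point, namely $c=b$. Hence necessarily $c_1=c_2=b$, and then $h_1=1/V'(c_1)=1/V'(b)=1/V'(c_2)=h_2$, contradicting our assumption. I expect the main obstacle to be handling the boundary/limit behavior of $\psi$ cleanly (especially the case $d_S=0$, where one must note $\lim_{c\downarrow0}\psi(c)=V'(0)$ by l'Hôpital, so the unimodality/critical-point count on the half-line still applies) and making the ``critical point of $\psi$'' $\Leftrightarrow$ ``pair satisfies \eqref{eq: uni_cond}'' equivalence airtight; once that is in place the uniqueness is immediate.
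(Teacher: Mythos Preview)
Your $\psi$-argument contains a genuine error. You correctly compute $\psi'(c)=\varphi(c)/c^{2}$ and note that $\psi$ increases exactly where $\varphi>0$. But then you write ``i.e.\ $\psi$ increases on $(d_S,b)$ and decreases on $(b,\infty)$'', which conflates the \emph{sign} of $\varphi$ with the \emph{monotonicity} of $\varphi$: what you established earlier is that $\varphi'(c)=c\,V''(c)$ has the sign of $V''$, so $\varphi$ is increasing on $(d_S,b)$ and decreasing on $(b,\infty)$ --- not that $\varphi$ is positive on $(d_S,b)$ and negative on $(b,\infty)$. In fact $\varphi>0$ throughout $(d_S,b]$, so the unique zero of $\varphi$ --- and hence the unique critical point of $\psi$ --- lies strictly to the \emph{right} of $b$, not at $b$. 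Your conclusion ``$c_1=c_2=b$'' is therefore false; for the model example $V_q$ with $d_S=0$ and $V^{\max}=1$ one computes $b=1/\sqrt{3}$ but $c^{\star}=1$.

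Ironically, the $\varphi$-route you abandoned is essentially the paper's proof. You dropped it because ``unimodality alone does not immediately give uniqueness'', but the missing ingredient is small: one only needs $\varphi(c)>0$ for $d_S<c<b$. The paper obtains this via the mean value theorem, writing $V(c_1)=V(0)+V'(\theta)\,c_1=V'(\theta)\,c_1$ for some $0<\theta<c_1$ and using $V'(\theta)<V'(c_1)$ (since $V'$ vanishes on $[0,d_S]$ and is strictly increasing on $(d_S,b)$); equivalently, one can note $\lim_{c\to d_S^{+}}\varphi(c)=0$ together with the monotonicity of $\varphi$ on $(d_S,b)$. Once $\varphi>0$ on $(d_S,b)$ is in hand, your own Rolle step finishes the job: two zeros $c_1<c_2$ of $\varphi$ would force a zero of $\varphi'$ strictly between them, hence $c_1<b<c_2$, contradicting $\varphi(c_1)>0$.
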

\begin{proof}
	1. Contrary to our claim, suppose that there are two pairs $(c_{1},h_{1}),(c_{2},h_{2})$, $(c_1,h_1)\not=(c_2,h_2)$, of positive reals such that we have
	\begin{equation}\label{eq: aux}
	\left\lbrace\begin{aligned}
	h_{1}\,V(c_{1})&= c_{1}\\
	h_{1}\,V^{\prime}(c_{1})&= 1
	\end{aligned}\right.\qquad\text{and}\qquad  \left\lbrace\begin{aligned}
	h_{2}\,V(c_{2})&=c_{2}\\
	h_{2}\,V^{\prime}(c_{2})&=1
	\end{aligned}\right..
	\end{equation}
	As in the situation $c_{1}=c_{2}$ it clearly follows that $h_{1}=h_{2}$ and thus $(h_{1},c_{1})=(h_{2},c_{2})$, it suffices to consider only the case $c_{1}\not=c_{2}$. Moreover, in view of the assumption on $V$, we may assume $d_{S}<c_{1}<c_{2}$.
	
	2. {\it Claim}: $c_{1}<b<c_{2}$, where the constant $b$ is defined in assumption (OVF 4). In order to see this claim, consider the function
	$g:(d_{S},\infty)\to\R$ defined by
	\begin{equation*}
	g(c):=c\,V^\prime(c)-V(c)
	\end{equation*}
	Clearly, $g$ is continuously differentiable and its derivative is given by
	\begin{equation*}
	g^{\prime}(c)=c\,V^{\prime\prime}(c).
	\end{equation*}
	Further, a simple calculation involving assumption \eqref{eq: aux} shows that $g(c_1)=0=g(c_2)$. Therefore, Rolle's theorem implies the existence of some real $c_{1}<\xi<c_{2}$ with $0=g^{\prime}(\xi)=\xi\,V^{\prime\prime}(\xi)$. As $\xi>d_S\geq 0$ it follows first that $V^{\prime\prime}(\xi)=0$ and then, in consideration of condition (OVF 4), $\xi=b$ as claimed.
	
	3. By the last part, we have $g(c_{1})=0$ and $d_S<c_{1}<b$. By applying the mean value theorem, we find some $0<\theta<c_{1}$ with 
	\begin{equation*}
	V(c_{1})=V(0)+V^{\prime}(\theta)\, c_{1}=V^{\prime}(\theta)\,c_1.
	\end{equation*}
	Observe that $0\leq V^{\prime}(\theta)<V^{\prime}(c_{1})$ as $V^{\prime}$ is constant with value zero on interval $[0,d_{s}]$ and strictly increasing in $(d_{s},b)$. Thus, we finally get
	\begin{equation*}
	0=g(c_{1})=c_1\,V^{\prime}(c_1)-V(c_1)=c_1\left[V^{\prime}(c_1)-V^{\prime}(\theta)\right]>0,
	\end{equation*}
	a contradiction to the existence of two pairs $(c_{1},h_{1}),(c_{2},h_{2})$, $(c_1,h_1)\not=(c_2,h_2)$, of positive reals satisfying assumption \eqref{eq: aux}. This finishes the proof.
\end{proof}

As the last preparatory step towards the main result of this section we show that there indeed exists a pair $(c^\star,h^\star)$ of positive reals satisfying \eqref{eq: uni_cond}.
\begin{proposition}\label{prop: degenerate_constant}
	Under the given assumptions on $V$, there is a uniquely determined pair $(c^{\star},h^{\star})\in(0,\infty)\times(0,\infty)$ with the property \eqref{eq: uni_cond}.
\end{proposition}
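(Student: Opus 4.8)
The plan is to show existence of a pair $(c^\star,h^\star)$ of positive reals with $h^\star V(c^\star)=c^\star$ and $h^\star V'(c^\star)=1$ by eliminating $h^\star$ and reducing to a single scalar equation for $c^\star$; uniqueness is already supplied by Proposition \ref{prop: branch_limit}, so only existence needs to be argued here. First I would note that any such pair must satisfy $V'(c^\star)>0$, hence $c^\star>d_S$, and then dividing the first condition by the second gives the equivalent requirement
\begin{equation*}
c^\star\,V'(c^\star)=V(c^\star),\qquad h^\star=\frac{1}{V'(c^\star)}.
\end{equation*}
So it suffices to produce some $c^\star\in(d_S,\infty)$ with $g(c^\star)=0$, where $g(c):=c\,V'(c)-V(c)$ is exactly the function already introduced in the proof of Proposition \ref{prop: branch_limit}; then one sets $h^\star:=1/V'(c^\star)$, which is a well-defined positive real since $V'(c^\star)>0$ on $(d_S,\infty)$.

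Next I would analyze $g$ on $(d_S,\infty)$ via its derivative $g'(c)=c\,V''(c)$. Assumption (OVF 4) tells us $V'$ is strictly increasing on $(d_S,b)$ and strictly decreasing on $(b,\infty)$, so $V''>0$ on $(d_S,b)$ and $V''<0$ on $(b,\infty)$; since $c>d_S\ge 0$, this means $g'>0$ on $(d_S,b)$ and $g'<0$ on $(b,\infty)$, i.e. $g$ is strictly increasing up to $c=b$ and strictly decreasing thereafter, attaining a strict global maximum at $b$. To get a sign change I examine the two ends. As $c\downarrow d_S$, using the mean value theorem as in the proof of Proposition \ref{prop: branch_limit} one gets $V(c)=V'(\theta)c$ for some $\theta\in(d_S,c)$ with $V'(\theta)<V'(c)$, whence $g(c)=c[V'(c)-V'(\theta)]>0$ for every $c\in(d_S,b)$; in particular $g>0$ throughout $(d_S,b]$ and there is no zero to the left of $b$. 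For the right end, as $c\to\infty$ we have $V(c)\to V^{\max}>0$ by (OVF 2) while $c\,V'(c)\to 0$: this last limit holds because $V'$ is eventually decreasing and $\int_b^\infty V'(s)\,ds=V^{\max}-V(b)<\infty$, so $V'(c)=o(1/c)$ and hence $c\,V'(c)\to 0$. Therefore $g(c)\to -V^{\max}<0$ as $c\to\infty$.

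Combining these facts, $g$ is continuous on $(b,\infty)$, strictly decreasing there, positive at $b$ and negative in the limit $c\to\infty$, so the intermediate value theorem yields a unique $c^\star\in(b,\infty)$ with $g(c^\star)=0$; together with the observation that $g$ has no zero in $(d_S,b]$, this is the only zero of $g$ in $(d_S,\infty)$. Setting $h^\star:=1/V'(c^\star)>0$ then gives a pair with property \eqref{eq: uni_cond}, and uniqueness of the pair follows from Proposition \ref{prop: branch_limit} (or directly from uniqueness of the zero of $g$ together with $h^\star$ being determined by $c^\star$).

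The main obstacle I anticipate is establishing rigorously that $c\,V'(c)\to 0$ as $c\to\infty$: (OVF 1)--(OVF 4) do not assume $V'\to 0$ explicitly, and one has to extract decay of $V'$ from boundedness of $V$ together with eventual monotonicity of $V'$. The cleanest route is the integral argument above — for $c>b$ one has $V^{\max}>V(c)-V(b)=\int_b^c V'(s)\,ds\ge (c-b)\,V'(c)$ since $V'$ is decreasing on $(b,\infty)$, so $V'(c)\le V^{\max}/(c-b)$, which already suffices to conclude $c\,V'(c)$ is bounded; to push it to the limit $0$ one splits $\int_{c/2}^{c}V'(s)\,ds\ge (c/2)V'(c)$ and notes the left side tends to $0$ because the improper integral $\int_b^\infty V'$ converges. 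Everything else is a routine application of Rolle/IVT on the explicitly sign-controlled function $g$.
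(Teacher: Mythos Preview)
Your proof is correct and follows a genuinely different route from the paper's. You work directly with the function $g(c)=cV'(c)-V(c)$ from the proof of Proposition~\ref{prop: branch_limit}, analyze its shape (strictly increasing on $(d_S,b)$, strictly decreasing on $(b,\infty)$), show it is positive on $(d_S,b]$ via the mean value theorem, and then use the asymptotic $c\,V'(c)\to 0$ to force a sign change on $(b,\infty)$. The paper instead fixes an auxiliary point $c_1\in(d_S,b)$, sets $\tilde h=c_1/V(c_1)$, and invokes the earlier Propositions~\ref{prop: bound for constant speed} and~\ref{prop: position of two constant} to produce a second point $c_2>c_1$ with $\tilde hV(c_2)=c_2$ and $\tilde hV'(c_2)\le 1$; the intermediate value theorem is then applied to the ratio $c\mapsto cV'(c)/V(c)$ between $c_1$ and $c_2$. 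The trade-off is clear: the paper avoids any asymptotic analysis at infinity by leaning on the structural results already established, whereas your argument is entirely self-contained and even yields uniqueness of $c^\star$ directly from the unimodality of $g$, without appealing to Proposition~\ref{prop: branch_limit}. Your integral tail argument for $cV'(c)\to 0$ is the right way to extract the needed decay from (OVF~1), (OVF~2), and the eventual monotonicity of $V'$; one small comment is that strict monotonicity of $V'$ gives only $V''\ge 0$ (respectively $\le 0$) pointwise, not strict inequality, but this is enough for your purposes since $V''$ cannot vanish on an interval.
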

\begin{proof}
	1. First observe that it suffices to prove the existence of such a pair $(c^{\star},h^{\star})$ of reals since the uniqueness part of the assertion would immediately follow from Proposition \ref{prop: branch_limit}. In order to see the existence, fix constant $d_{S}<c_{1}<b$ and set $\tilde{h}:=c_{1}/V(c_{1})>0$. Clearly, the reals $\tilde{h}$ and $c_{1}$ satisfy the constant-speed condition given by Eq. \eqref{eq: constant-speed condition}. Moreover, we claim that $V^{\prime}(c_{1})>1/\tilde{h}$ such that Proposition \ref{prop: position of two constant} implies the existence of another constant $c_{2}>c_{1}$ with $\tilde{h}\,V(c_{2})=c_{2}$. In order to see the claim, first recall from the assumptions (OVF 1) -- (OVF 4) that $V^{\prime}$ is positive and strongly monotonically increasing on $(d_{s},b)$. Now, write $V(c_{1})$ as
	\begin{equation*}
	V(c_{1})=V(0)+V^{\prime}(\xi)c_{1}=V^{\prime}(\xi)\,c_{1}
	\end{equation*}
	with some $0<\xi<c_{1}$. As $V(c_{1})>0$ we clearly have $V^{\prime}(\xi)>0$ and so $\xi> d_{S}$. It follows that
	\begin{equation*}
	V^{\prime}(c_{1})\,\tilde{h}=\frac{V^{\prime}(c_{1})\,c_{1}}{V(c_{1})}=
	\frac{V^{\prime}(c_{1})\,c_{1}}{V^{\prime}(\xi)\,c_{1}}>1,
	\end{equation*}
	and so $V^{\prime}(c_{1})>1/\tilde{h}$. Thus, there is in fact some $c_{2}>c_{1}$ with $\tilde{h}\,V(c_{2})=c_{2}$.
	
	2. Next, note that we have $V^{\prime}(c_{2})\leq 1/\tilde{h}$. Indeed, otherwise an application of Proposition \ref{prop: position of two constant} would show the existence of some further constant $c_{3}>c_{2}$ with $\tilde{h}\,V^{\prime}(c_{3})=c_{3}$, in contradiction to Proposition \ref{prop: bound for constant speed}.
	
	In the case $V^{\prime}(c_{2})=1/\tilde{h}$, the assertion obviously follows with reals $h^{\star}=\tilde{h}$ and $c^{\star}=c_{2}$. Therefore, assume that $V^{\prime}(c_{2})<1/\tilde{h}$ in the following, and let function $g:(d_{s},\infty)\to(0,\infty)$ be given by
	\begin{equation*}
	g(c):=\frac{c\,V^{\prime}(c)}{V(c)}.
	\end{equation*}
	Clearly, $g$ is continuous, and using the first part, we get
	\begin{equation*}
	g(c_{2})=\frac{c_{2}\,V^{\prime}(c_{2})}{V(c_{2})}=\tilde{h}\,V^{\prime}(c_{2})<1<
	\tilde{h}\,V^{\prime}(c_{1})=\frac{c_{1}\,V^{\prime}(c_{1})}{V(c_{1})}=g(c_{1}).
	\end{equation*}
	It follows that there exists $c_{1}<c^{\star}<c_{2}$ with $g(c^{\star})=1$. Setting $h^{\star}:=c^{\star}/V(c^{\star})>0$, we obviously get a pair $(c^{\star},h^{\star})$ of positive reals satisfying condition \eqref{eq: uni_cond}.
\end{proof}

Now, we are in the position to state and prove our main result of this section.
\begin{theorem}\label{thm: two_branches}
	Let $V$ satisfy the standing assumptions (OVF 1) -- (OVF 4). Then, apart from the pair $(c^{\star},h^{\star})\in (0,\infty)\times (0,\infty)$ obtained from Proposition \ref{prop: degenerate_constant}, all other solutions $(c,h)\in (0,\infty)\times(0,\infty)$ of Eq. \eqref{eq: constant-speed condition} are given by two branches $h\mapsto (c(h),h)$ defined by two functions
	\begin{equation}
	c_{1}:(h^{\star},\hat{h})\to (d_{S},c^{\star})\qquad\text{and}\qquad
	c_{2}:(h^{\star},\infty)\to (c^{\star},\infty)
	\end{equation}
	where $h^\star<\hat{h}\leq \infty$. Both, $c_{1}$ and $c_{2}$, are continuously differentiable with
	\begin{equation*}
	c^{\prime}_{i}(h)=\frac{V(c_{i}(h))}{1-h\,V^{\prime}(c_i(h))}\,,
	\end{equation*}
	where
	\begin{equation*}
	h\,V^{\prime}(c_{1}(h))>1
	\end{equation*}
	along the first branch and, conversely,
	\begin{equation*}
	h\,V^{\prime}(c_{2}(h))<1
	\end{equation*}
	along the second branch.
\end{theorem}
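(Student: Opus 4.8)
The plan is to reduce the two-variable equation \eqref{eq: constant-speed condition} to the study of a single real function. First I would note that any pair $(c,h)\in(0,\infty)\times(0,\infty)$ solving \eqref{eq: constant-speed condition} necessarily has $c>d_S$: for $0<c\le d_S$ assumption (OVF 3) gives $V(c)=0$, hence $h\,V(c)=0\not=c$. On $(d_S,\infty)$ we have $V>0$, so \eqref{eq: constant-speed condition} is equivalent to $h=\psi(c)$ with
\begin{equation*}
\psi:(d_S,\infty)\to(0,\infty),\qquad \psi(c):=\frac{c}{V(c)}.
\end{equation*}
Thus the full solution set of \eqref{eq: constant-speed condition} in $(0,\infty)\times(0,\infty)$ is exactly the graph $\{(c,\psi(c)):c\in(d_S,\infty)\}$, and everything reduces to a discussion of $\psi$. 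Since $V$ is $C^{2}$ and positive on $(d_S,\infty)$ by (OVF 4), the map $\psi$ is $C^{1}$ there, with
\begin{equation*}
\psi^{\prime}(c)=\frac{V(c)-c\,V^{\prime}(c)}{V(c)^{2}}=-\frac{g(c)}{V(c)^{2}},\qquad g(c):=c\,V^{\prime}(c)-V(c),
\end{equation*}
where $g$ is the auxiliary function already used in the proof of Proposition \ref{prop: branch_limit} and $g^{\prime}(c)=c\,V^{\prime\prime}(c)$.

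Next I would pin down the sign of $g$. Because $V$ is $C^{1}$ and $V^{\prime}\equiv 0$ on $(-\infty,d_S]$, we have $V^{\prime}(d_S)=0$ and $V(d_S)=0$, hence $g(c)\to 0$ as $c\to d_S^{+}$; by (OVF 4) the function $g$ is strictly increasing on $(d_S,b)$ and strictly decreasing on $(b,\infty)$, so $g>0$ on $(d_S,b]$; and since $V^{\prime}$ is nonnegative, eventually decreasing and integrable on $(b,\infty)$ (as $\int_{b}^{\infty}V^{\prime}=V^{\max}-V(b)<\infty$), one gets $c\,V^{\prime}(c)\to 0$, so $g(c)\to-V^{\max}<0$ as $c\to\infty$. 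Consequently $g$ has a single zero $c^{\star}$, which lies in $(b,\infty)$, with $g>0$ on $(d_S,c^{\star})$ and $g<0$ on $(c^{\star},\infty)$. A short computation shows that $c^{\star}$ together with $h^{\star}:=\psi(c^{\star})$ satisfies both relations in \eqref{eq: uni_cond}, and by the uniqueness part of Proposition \ref{prop: branch_limit} this is precisely the pair furnished by Proposition \ref{prop: degenerate_constant}.

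From the sign of $g$ the shape of $\psi$ follows: $\psi$ is strictly decreasing on $(d_S,c^{\star})$, has $\psi^{\prime}(c^{\star})=0$, and is strictly increasing on $(c^{\star},\infty)$, with global minimum $\psi(c^{\star})=h^{\star}$. Moreover $\psi(c)\to\infty$ as $c\to\infty$ because $V(c)\to V^{\max}$, and $\psi(c)\to\infty$ as $c\to d_S^{+}$ (if $d_S>0$ the denominator tends to $0$ and the numerator to $d_S>0$, while if $d_S=0$ then $V(c)/c\to V^{\prime}(0)=0$). Hence $\psi$ restricts to a strictly decreasing homeomorphism from $(d_S,c^{\star})$ onto an interval $(h^{\star},\hat h)$ with $\hat h:=\sup_{c\in(d_S,c^{\star})}\psi(c)\in(h^{\star},\infty]$, and to a strictly increasing homeomorphism from $(c^{\star},\infty)$ onto $(h^{\star},\infty)$. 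Their inverses are the asserted maps $c_{1}:(h^{\star},\hat h)\to(d_S,c^{\star})$ and $c_{2}:(h^{\star},\infty)\to(c^{\star},\infty)$, and since $\psi^{\prime}\not=0$ off $c^{\star}$ the inverse function theorem makes $c_{1}$ and $c_{2}$ continuously differentiable. Differentiating $h\,V(c_{i}(h))=c_{i}(h)$ in $h$ gives $c_{i}^{\prime}(h)\,(1-h\,V^{\prime}(c_{i}(h)))=V(c_{i}(h))$, which is the stated formula for $c_{i}^{\prime}$; here $1-h\,V^{\prime}(c_{i}(h))\not=0$ because along the branch $h=\psi(c_{i}(h))$, so $h\,V^{\prime}(c_{i}(h))=1+g(c_{i}(h))/V(c_{i}(h))$, and this is $>1$ exactly when $c_{i}(h)<c^{\star}$ and $<1$ exactly when $c_{i}(h)>c^{\star}$, giving at once the two inequalities claimed along the two branches. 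Finally, the graph of $\psi$ over $(d_S,c^{\star})\cup\{c^{\star}\}\cup(c^{\star},\infty)$ is the whole solution set and has $(c^{\star},h^{\star})$ as its unique point over $c^{\star}$, so this accounts for all solutions apart from that pair (consistent with the count of Proposition \ref{prop: bound for constant speed}: two for $h^{\star}<h<\hat h$, one otherwise).

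The step I expect to be the main obstacle is the sign-and-monotonicity analysis of $g$, equivalently fixing the exact shape of $\psi$, together with the boundary limits: verifying $g(c)\to 0$ as $c\to d_S^{+}$ and $c\,V^{\prime}(c)\to 0$ as $c\to\infty$ directly from (OVF 1)--(OVF 4), and handling the case $d_S=0$ when showing $\psi(c)\to\infty$ at the left endpoint. Once the graph of $\psi$ is understood, the remainder is the inverse function theorem and bookkeeping.
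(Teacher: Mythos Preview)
Your proposal is correct and follows the same underlying idea as the paper: reduce \eqref{eq: constant-speed condition} to $h=\psi(c)=c/V(c)$ on $(d_S,\infty)$, show $\psi$ is strictly decreasing on $(d_S,c^\star)$ and strictly increasing on $(c^\star,\infty)$, and invert each monotone piece to obtain $c_1$ and $c_2$.

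The execution differs in how monotonicity of $\psi$ is established. The paper argues by contradiction, leaning on the chain of Propositions~\ref{prop: bound for constant speed}--\ref{prop: degenerate_constant}: for each $c\in(d_S,c^\star)$ it assumes $hV'(c)\le 1$ and, via the function $g(c)=cV'(c)/V(c)$ and the intermediate value theorem, produces a second pair satisfying \eqref{eq: uni_cond}, contradicting Proposition~\ref{prop: branch_limit}; the interval $(c^\star,\infty)$ is handled analogously after first exhibiting an auxiliary point $\tilde c_1>c^\star$. You instead analyze $g(c)=cV'(c)-V(c)$ directly: from $g(d_S^+)=0$, $g>0$ on $(d_S,b]$, $g$ non-increasing on $(b,\infty)$, and $g(c)\to -V^{\max}$ you obtain a single sign change at $c^\star$, hence the sign of $\psi'=-g/V^2$ and both inequalities $hV'(c_i(h))\gtrless 1$ in one stroke. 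This is more self-contained and avoids the back-and-forth with the earlier propositions; your appeal to Proposition~\ref{prop: branch_limit} is only to identify your $c^\star$ with the paper's. One small point worth tightening: (OVF~4) gives $V'$ strictly monotone, hence $V''\ge 0$ (resp.\ $\le 0$) rather than strict inequality, so ``$g$ strictly increasing/decreasing'' should be argued as you do implicitly---$g$ non-increasing on $(b,\infty)$ with no interval of constancy (else $V'$ would be constant there), so the zero is unique. Your observation that $V'(0)=0$ when $d_S=0$ (forced by $V\equiv 0$ on $(-\infty,0]$ and $C^1$-smoothness) is correct and in fact shows $\hat h=\infty$ in that case too, slightly sharpening the paper's remark after the theorem.
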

\begin{proof}
	1. To begin with, observe that in the case of $0\leq c\leq d_{S}$ we have $V(c)=0$ and thus in this situation there clearly is no real $h>0$ such that Eq. \eqref{eq: constant-speed condition} holds. Therefore, it is sufficient to study only the case $c>d_{S}$ below.
	
	2. \textit{Definition of $c_{1}$.} Given $d_{S}<c<c^{\star}$, set $h:=c/V(c)>0$. Of course, the pair $(c,h)$ fulfills Eq. \eqref{eq: constant-speed condition}. Furthermore, we claim that $h\,V^{\prime}(c)>1$. In order to see this, it suffices to consider $b\leq c<c^{\star}$ as in the situation $d_{S}<c<b$ the assertion immediately follows from the proof of Proposition \ref{prop: degenerate_constant}. In particular, there is a pair $(\tilde{c},\tilde{h})$ with $d_S<\tilde{c}<b$ satisfying both $\tilde{h}\,V(\tilde{c})=\tilde{c}$ and $\tilde{h}\,V^{\prime}(\tilde{c})>1$. Now, assume that $b\leq c< c^{\star}$ but $h\,V^{\prime}(c)\leq 1$. Then, in consideration of $c\not= c^{\star}$ and of Proposition \ref{prop: degenerate_constant}, it follows that $h\,V^{\prime}(c)<1$. Moreover, for the continuous function $g$ used in the proof of Proposition \ref{prop: degenerate_constant} we have $g(c)<1<g(\tilde{c})$. For this reason, the intermediate value theorem implies the existence of some $\tilde{c}<c_{0}<c<c^{\star}$ with
	\begin{equation*}
	1=g(c_{0})=\frac{c_{0}\,V^{\prime}(c_{0})}{V(c_{0})}.
	\end{equation*}
	Setting $h_{0}:=c_{0}/V(c_{0})>0$, we see that the pair $(c_{0},h_{0})$ of positive reals satisfies $h_{0}\,V(c_{0})=c_{0}$ and $h_{0}\,V^{\prime}(c_{0})=1$. But this contradicts Proposition \ref{prop: branch_limit} as $c_{0}<c^{\star}$. Hence, $h\,V^{\prime}(c)>1$ as claimed.
	
	Consider now the map $H_{1}:(d_{S},c^{\star})\ni c\mapsto c/V(c)=:h\in(0,\infty)$. $H_{1}$ is clearly continuously differentiable, and for the derivative we get
	\begin{equation*}
	H^{\prime}_{1}(c)=\frac{1}{V(c)}-\frac{c\,V^{\prime}(c)}{V^2(c)}=\frac{1}{V(c)}-
	\frac{h\,V^{\prime}(c)}{V(c)}=\frac{1-h\,V^{\prime}(c)}{V(c)}<0,
	\end{equation*}
	that is, $H_{1}$ is strictly decreasing. Thus, after setting $\hat{h}:=\lim_{c\searrow d_{S}}H_{1}(c)$, we find a continuously differentiable map $c_{1}:(h^{\star},\hat{h})\to (d_{S},c^{\star})$ such that $(H_{1}\circ c_{1})(h)=h$ for all $h\in (h^{\star},\hat{h})$ and $(c_{1}\circ H_{1})(c)=c$ for all $c\in(d_{S},c^{\star})$. In particular,
	\begin{equation*}
	c_{1}^{\prime}(h)=\frac{1}{H^{\prime}_{1}(c_{1}(h))}=\frac{V(c_{1}(h))}
	{1-h\,V^{\prime}(c_{1}(h))}<0.
	\end{equation*}
	
	3. Fix some $h^{\star}<h_{1}<\hat{h}$. Then the last part implies that $(c_{1}(h_{1}),h_{1})$ satisfies Eq. \eqref{eq: constant-speed condition} and $h_{1}\,V^{\prime}(c_{1}(h_{1}))>1$. Furthermore, by Proposition \ref{prop: position of two constant} there is an additional constant $\tilde{c}_{1}>c_{1}(h_{1})$ such that $h_{1}\,V(\tilde{c}_{1})=\tilde{c}_{1}$ holds. We claim that $\tilde{c}_{1}>c^{\star}$ and $h_{1}\,V^{\prime}(\tilde{c}_{1})<1$. To see this, first observe that from the last part it follows easily that $\tilde{c}_{1}\geq c^{\star}$. Next, we surely have $h_{1}\,V^{\prime}(\tilde{c}_{1})\leq 1$ since otherwise Proposition \ref{prop: position of two constant} would imply the existence of a third constant $c_{3}>\tilde{c}_{1}$ with $h_{1}\,V(c_{3})=c_{3}$, in contradiction to Proposition \ref{prop: bound for constant speed}. But if $h_{1}\,V^{\prime}(\tilde{c}_{1})\leq 1$ then, in consideration of $h_{1}\not=h^{\star}$ and Proposition \ref{prop: degenerate_constant}, necessarily $h_{1}\,V^{\prime}(\tilde{c}_{1})<1$ holds. As finally the assumption $\tilde{c}_{1}=c^{\star}$ also results in a contradiction, namely,
	\begin{equation*}
	h_{1}=\frac{\tilde{c}_{1}}{V(\tilde{c}_{1})}=\frac{c^{\star}}{V(c^{\star})}=h^{\star},
	\end{equation*}
	we see that $\tilde{c}_{1}>c^{\star}$ and $h_{1}\,V^{\prime}(\tilde{c}_{1})<1$ as claimed.
	
	4. \textit{Definition of $c_{2}$.} Consider any $c^{\star}<c<\infty$, and set again $h:=c/V(c)>0$. We assert that we have $h\,V^{\prime}(c)<1$. In order to show this, recall from the last part that in the case $c=\tilde{c}_1$ we clearly have $h\,V^{\prime}(c)<1$. Otherwise, $c\not=\tilde{c}_1$ and we assume, contrary to the assertion, that $h\,V^{\prime}(c)\geq 1$. Then either $h\,V^{\prime}(c)=1$ or $h\,V^{\prime}(c)>1$. However, both situations lead to a contradiction. First observe that due to Proposition \ref{prop: degenerate_constant} the case $h\,V^{\prime}(c)=1$ results in $c=c^{\star}$, and thus indeed in a contradiction to $c>c^{\star}$. Next, consider the situation $h\,V^{\prime}(c)>1$. For the map $g$ from the proof of Proposition \ref{prop: degenerate_constant} we get
	\begin{equation*}
	g(\tilde{c}_{1})<1<g(c)
	\end{equation*}
	with $\tilde{c}_{1}>c^{\star}$ from the second part. Hence, due to the intermediate value theorem there is some $c_{0}\in(\tilde{c}_{1},c)$ or $c_{0}\in(c,\tilde{c}_{1})$ such that $g(c_{0})=1$ holds. But then the pair $(c_{0},h_{0})$ with $h_{0}:=c_{0}/V(c_{0})$ satisfy $h_{0}\,V(c_{0})=c_{0}$ and $h_{0}\,V^{\prime}(c_{0})=1$, in contradiction to Proposition \ref{prop: degenerate_constant} as $c_{0}\not=c^{\star}$. For this reason, we have $h\,V^{\prime}(c)<1$ as claimed.
	
	Let now the map $H_{2}:(c^{\star},\infty)\to (0,\infty)$ be defined by $H_{2}(c)=c/V(c)$. Then $H_{2}$ is continuously differentiable with
	\begin{equation*}
	H_{2}^{\prime}(c)=\frac{1-h\,V^{\prime}(c)}{V(c)}>0
	\end{equation*}
	for all $c^\star <c<\infty$. Thus, $H_{2}$ is strictly increasing, and we have 
	\begin{equation*}
	\lim_{c\searrow c^{\star}}H_{2}(c)=c^{\star}/V(c^{\star})=h^{\star}
	\end{equation*}
	and $H_{2}(c)\to\infty$ as $c\to\infty$. Consequently, there is a continuously differentiable map $c_{2}:(h^{\star},\infty)\to (c^{\star},\infty)$ such that $(H_{2}\circ c_{2})(h)=h$ for all $h\in(h^{\star},\infty)$ and $(c_{2}\circ H_{2})(c)=c$ for all $c\in(c^{\star},\infty)$. Finally, we get
	\begin{equation*}
	c_{2}^{\prime}(h)=\frac{1}{H_{2}^{\prime}(c_{2}(h))}=\frac{V(c_{2}(h))}{1-h\,
		V^{\prime}(c_{2}(h))}>0,
	\end{equation*}
	which closes the proof.
\end{proof}

\begin{remark}
	1. Observe that in the case $d_{S}>0$ we have $\hat{h}=\infty$. In all others situation, the value of $\hat{h}$ depends on the value of $V^{\prime}(0)$ as a simple argument shows.
	
	2. Of course, the existence of the two functions $c_{1}$ and $c_{2}$ can also be obtained by a straightforward application of the implicit function theorem. Indeed, if we assume that there are $\tilde{h},\tilde{c}>0$ with $\tilde{h}\,V(\tilde{c})=\tilde{h}$ and $V^{\prime}(\tilde{c})\not=1/\tilde{h}$ then the continuously differentiable function $g:(0,\infty)\times(0,\infty)\to\R$ given by $g(c,h):=h\,V(c)-c$ satisfies
	\begin{equation*}
	g(\tilde{c},\tilde{h})=0\qquad\text{and}\qquad\frac{\partial g}{\partial c}(\tilde{c},\tilde{h})=\tilde{h}\,V^{\prime}(\tilde{c})-1\not=0.
	\end{equation*}
	But, however, first you have to find some reals $\tilde{c},\tilde{h}>0$ satisfying $\tilde{h}\,V(\tilde{c})=\tilde{c}$ and $\tilde{h}\,V^{\prime}(\tilde{c})\not=1$ simultaneously, and that should not make the proof substantially shorter than the one discussed above.
\end{remark}

\section{The linearization along a wavefront solution with constant-speed and its spectral properties }\label{sec: linearization}
After finding all wavefront solutions with constant speed of Eq. \eqref{eq: DDE-canonical}, we are now interested in the stability properties of these solutions.  In order to address this issue  by the principles of linearized stability and instability in the next section, we first analyze the linearization of Eq. \eqref{eq: DDE-canonical} along such a solution and study its spectral properties.

Throughout this section, let $w^{c,d}:\R\to\R^2$, $c>0$ and $d\in\R$, be a fixed wavefront solution with constant-speed  of Eq. \eqref{eq: DDE-canonical}.  Before linearizing Eq. \eqref{eq: DDE-canonical} along $w^{c,d}$, it is convenient to translate first the solution $w^{c,d}$ to the origin as follows: Let
\begin{equation*}
w(t):=w^{c,d}(t)+v(t)
\end{equation*}
denote a solution of Eq. \eqref{eq: DDE-canonical} which incorporates a small perturbation $v:I\to\R^2$, $I\subset\R$ an interval, of the wavefront solution $w^{c,d}$ with constant-speed. Then, by inserting this ansatz for a solution $w$ of Eq. \eqref{eq: DDE-canonical} into the differential equation, we get
\begin{equation*}
(w^{c,d})^{\prime}(t)+v^{\prime}(t)=f(w_{t}^{c,d}+v_{t})
\end{equation*}
and so
\begin{equation}\label{eq: DDE_2}
v^{\prime}(t)=\tilde{f}(v_{t})
\end{equation}
with the map $\tilde{f}:C\to\R^2$ defined by
\begin{equation*}
\tilde{f}(v_{t}):=f\left(v_{t}+w_{t}^{c,d}\right)-f\left(w_{t}^{c,d}\right).
\end{equation*}
The map $\tilde{f}$ has, of course, the same smoothness and compactness properties as $f$. In particular, for each $\varphi\in C$, Eq. \eqref{eq: DDE_2} has a uniquely determined solution $v^{\varphi}:[-1,\infty)\to\R^{2}$ with $v^{\varphi}_0=\varphi$. The segments $v_t^{\varphi}$, $\varphi\in C$ and $t\geq 0$, form a continuous semiflow $\tilde{F}:[0,\infty)\times C\to C$ with $\tilde{F}(t,\varphi):=v_t^{\varphi}$. This semiflow $\tilde{F}$ is obviously closely related to the semiflow $F$ induced by the solution of Eq. \eqref{eq: DDE-canonical}. Indeed, we have
\begin{equation*}
\tilde{F}(t,\varphi)+w^{c,d}_t=v^{\varphi}_t+w^{c, d}_t=F(t,\varphi+w^{c,d}_0)
\end{equation*}
for all $t\geq 0$ and all $\varphi\in C$.

The solution $w^{c,d}$ of Eq. \eqref{eq: DDE-canonical} is now represented by the zero solution of Eq. \eqref{eq: DDE_2}, and it is unstable / stable / (locally) asymptotically stable if and only if the zero solution of Eq. \eqref{eq: DDE_2}, that is, the stationary point $\varphi_0:=0\in C$ of the semiflow $\tilde{F}$, is unstable / stable / (locally) asymptotically stable.

Now, we linearize Eq. \eqref{eq: DDE-canonical} along the wavefront solution $w^{c,d}$, or equivalently, we linearize Eq. \eqref{eq: DDE_2} along the zero solution. For this reason, we calculate the derivative of the map $f$ defining the right-hand side of Eq. \eqref{eq: DDE-canonical}. At each $\varphi\in C$ it forms a bounded linear operator $Df(\varphi)\in\mathcal{L}(C,\R^{2})$ whose action to some $\psi\in C$ is given by
\begin{equation*}
\begin{aligned}
Df(\varphi)\psi&=D(G\circ(\mathrm{ev}_{0}\times\mathrm{ev}_{-1}))(\varphi)(\psi)\\
&=\big(DG((\mathrm{ev}_{0}\times\mathrm{ev}_{-1})(\varphi))\circ
D(\mathrm{ev}_{0}\times\mathrm{ev}_{-1})(\varphi)\big)(\psi)\\
&=\big(DG(\varphi(0),\varphi(-1))\big((\mathrm{ev}_{0}\times\mathrm{ev}_{-1})(\psi)\big)\\
&=\begin{pmatrix}
\psi_{2}(0)\\ \\h^{2}\,V^{\prime}(\varphi_{1}(-1)-\varphi_{1}(0))\left[\psi_{1}(-1)-\psi_{1}(0)\right]+h\,\psi_{2}(0)
\end{pmatrix}.
\end{aligned}
\end{equation*}
Hence, along the zero solution of Eq. \eqref{eq: DDE_2} the associated linear delay equation reads
\begin{equation}\label{eq: linear DDE}
y^{\prime}(t)=L\,y_{t}
\end{equation}
with $L\in\mathcal{L}(C,\R^{2})$ defined by
\begin{equation*}
\begin{aligned}
L\,\varphi&=D\tilde{f}(0)\,\varphi\\
&=Df(w^{c,d}_{t})\,\varphi\\
&=\begin{pmatrix}
\varphi_{2}(t)\\ \\ h^{2}V^{\prime}(c)\left[\varphi_{1}(t-1)-\varphi_{1}(t)\right]+h\,\varphi_{2}(t)
\end{pmatrix}.
\end{aligned}
\end{equation*}
Of course, Eq. \eqref{eq: linear DDE} is equivalent to the ``linearization''
\begin{equation*}
u^{\prime\prime}(t)=h^2\,V^{\prime}(c)\left(u(t-1)-u(t)\right)+h\,u^{\prime}(t)
\end{equation*}
of the scalar delay differential equation \eqref{eq: DDE} along the solution $z(t)=-c\,t+d$.
However, we stay in the more functional analytical setting of retarded functional differential equations and use a consequence of the Riesz representation theorem, which enables -- compare, for instance, Diekmann et al. \cite[Theorem 1.1 in Chapter I]{Diekmann1995} -- to represent the action of the operator $L$ to some $\varphi\in C$ by a Riemann-Stieltjes integral
\begin{equation*}
L\,\varphi=\int_{0}^{1}d\zeta(\tau)\varphi(-\tau)
\end{equation*}
involving a uniquely determined normalized function $\zeta:[0,1]\to\R^{2\times 2}$ of bounded variation. In this context, the normalization conditions means that $\zeta$ should satisfy $\zeta(0)=0$ and be continuous from the right on $(0,1)$, that is, $\zeta(\tau)=\zeta(\tau+)$ for all $0<\tau<1$.  A straightforward calculation shows that
\begin{equation*}
\zeta(\tau):=\begin{cases}
\begin{pmatrix}
0&0\\0&0
\end{pmatrix},&\tau=0,\\&\\
\begin{pmatrix}
0&1\\-h^{2}\,V^{\prime}(c)&h
\end{pmatrix},&0<\tau<1,\\ & \\
\begin{pmatrix}
0&1\\
0&h
\end{pmatrix},&\tau=1.
\end{cases}
\end{equation*}
For each $\varphi\in C$, Eq. \eqref{eq: linear DDE} has a uniquely determined solution $y^{\varphi}:[-1,\infty)\to\R^{2}$. The equations
\begin{equation*}
T(t)\,\varphi=y^{\varphi}_{t},
\end{equation*}
with $\varphi\in C$ and $t\geq 0$ define a strongly continuous semigroup $T=\lbrace T(t)\rbrace_{t\geq 0}$ of bounded linear operators $T(t):C\to C$. The infinitesimal generator $G:\mathcal{D}(G)\to C$ is given by
\begin{equation*}
G\,\varphi=\varphi^{\prime}
\end{equation*}
where
\begin{equation*}
\mathcal{D}(G):=\left\lbrace\varphi\in C\mid \varphi^{\prime}\in C, \varphi^{\prime}(0)=L\varphi\right\rbrace.
\end{equation*}

The associated characteristic matrix reads
\begin{equation*}
\triangle(\lambda):=\lambda\, E_{2}-\int_{0}^{1}e^{-\lambda\tau}\,d\zeta(\tau)
=\begin{pmatrix}
\lambda &-1\\
h^{2}\,V^{\prime}(c)[1-e^{-\lambda}]&\lambda-h
\end{pmatrix}
\end{equation*}
such that for the characteristic equation of Eq. \eqref{eq: linear DDE} we get
\begin{equation}\label{eq: char}
\det\triangle(\lambda)=\lambda^{2}-h\lambda+h^{2}\,V^{\prime}(c)\left[1-e^{-\lambda}\right]=0.
\end{equation}
The (countably infinitely many) solutions of this algebraic equation coincide with the spectrum $\sigma(G)\subset \mathbb{C}$ of $G$. The last consists only of eigenvalues with finite rank, that is, the associated generalized eigenspaces are finite dimensional, and for each real $\beta>0$ the spectral subset $\lbrace \lambda\in\sigma(G)\mid\mathrm{Re}\,(\lambda)>\beta\rbrace$ is either empty or finite. Writing $\sigma_{u}(G)$, $\sigma_{c}(G)$, and $\sigma_{s}(G)$ for the spectral subsets of $\sigma(G)$ consisting of eigenvalues with negative, zero, and positive real parts, respectively, we get the splitting 
\begin{equation*}
\sigma(G)=\sigma_u(G)\cup\sigma_c(G)\cup\sigma_s(G)
\end{equation*}
of $\sigma(G)$. Furthermore, let $C_{u}$, $C_{c}$ and $C_{s}$ denote the associated realified generalized eigenspaces, which are called the unstable, the center and the stable space of $G$. Then the Banach space $C$ decomposes to
\begin{equation*}\label{eq: decomposition}
C=C_{u}\oplus C_{c}\oplus C_{s},
\end{equation*}
with the two $C_u$, $C_c$ obviously finite and the one $C_s$ in general infinite dimensional subspaces.

Now, it is apparent that for any $h>0$ and any $V^{\prime}(c)> 0$, we always have $\lambda_0=0\in\sigma(G)$; that is, the linearization along any wavefront solution with constant-speed has a zero eigenvalue. In the case $h\,V^{\prime}(c)\not=1$, the eigenvalue $\lambda_{0}=0$ is clearly simple, whereas in the case $h\,V^{\prime}(c)=1$ it has the algebraic multiplicity two or three. The permanent occurrence of the zero eigenvalue is caused by the translation symmetry of $f$. To be more precisely, the direction of the translation invariance of $f$ is always an eigendirection of the zero eigenvalue as we shall see next.
\begin{proposition}\label{prop: simple_zero}
	Given a wavefront solution $w^{c, d}$ with constant-speed of Eq. \eqref{eq: DDE-canonical}, let $N\subset C_c$ denote the realified generalized eigenspace of $G$ associated with the eigenvalue $\lambda_{0}=0$. Then $\R\,\hat{e}_1\subseteq N$ with $\hat{e}_1\in C$ defined by Eq. \eqref{eq: center-direction}.
\end{proposition}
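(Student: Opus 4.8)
The plan is to verify directly that the constant function $\hat{e}_1$ from \eqref{eq: center-direction} is an \emph{eigenvector} of the infinitesimal generator $G$ for the eigenvalue $\lambda_0=0$; once this is established, the asserted inclusion $\R\,\hat{e}_1\subseteq N$ follows at once, since the realified generalized eigenspace $N$ contains the ordinary eigenspace of $G$ belonging to $\lambda_0=0$ and, being a linear subspace, is closed under multiplication by real scalars.

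First I would exploit that $\hat{e}_1$ is constant, so $\hat{e}_1^{\prime}=0\in C$; in particular the requirement $\varphi^{\prime}\in C$ in the description of $\mathcal{D}(G)$ is trivially met, and the remaining condition $\hat{e}_1^{\prime}(0)=L\,\hat{e}_1$ collapses to the single algebraic identity $L\,\hat{e}_1=0\in\R^2$. This identity is just the infinitesimal form of the translation symmetry \eqref{eq: translation_invariance}: differentiating $f(\varphi+k\,\hat{e}_1)=f(\varphi)$ with respect to $k$ at $k=0$ gives $Df(\varphi)\,\hat{e}_1=0$ for every $\varphi\in C$, and specializing to $\varphi=w^{c,d}_t$ yields $L\,\hat{e}_1=Df(w^{c,d}_t)\,\hat{e}_1=0$. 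As a sanity check one may also substitute $\varphi_1\equiv 1$, $\varphi_2\equiv 0$ directly into the explicit formula for $L$: both $\varphi_2(0)$ and $\varphi_1(-1)-\varphi_1(0)=1-1$ vanish, whence $L\,\hat{e}_1=0$.

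Combining these two observations, $\hat{e}_1\in\mathcal{D}(G)$ and $G\,\hat{e}_1=\hat{e}_1^{\prime}=0=0\cdot\hat{e}_1$, so $\hat{e}_1$ lies in the kernel of $G$, hence in $N$, and therefore $\R\,\hat{e}_1\subseteq N$. I do not expect a genuine obstacle here; the only step requiring a little care is recognizing that constancy of $\hat{e}_1$ reduces membership in $\mathcal{D}(G)$ to the identity $L\,\hat{e}_1=0$, and that this identity is precisely what the translation invariance \eqref{eq: translation_invariance} delivers upon differentiation — which is exactly why the zero eigenvalue is forced to be present for every wavefront solution with constant speed.
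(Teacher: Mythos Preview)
Your argument is correct and follows essentially the same route as the paper: verify that $\hat{e}_1$ is constant (hence $\hat{e}_1'=0$), check $L\,\hat{e}_1=0$ so that $\hat{e}_1\in\mathcal{D}(G)$, and conclude $G\,\hat{e}_1=0$, whence $\R\,\hat{e}_1\subseteq N$. The only addition you make is the conceptual remark that $L\,\hat{e}_1=0$ can be obtained by differentiating the translation invariance \eqref{eq: translation_invariance}, which is a nice observation but not needed beyond the direct substitution the paper also alludes to.
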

\begin{proof}
	Of course, $\hat{e}_1$ is $C^1$-smooth and $(\hat{e}_1)^\prime=0\in C$. Further, an easy computation shows that $L\hat{e}_1=0\in\R^2$. Hence, we clearly have $\hat{e}_1\in\mathcal{D}(G)$. Moreover, as
	$G\hat{e}_1=(\hat{e}_1)^\prime=0=\lambda_0 \hat{e}_1$ it follows that $\hat{e}_1\in N$ and so $\mathbb{R}\hat{e}_1\subseteq N$ as claimed.
\end{proof}

Observe that, apart from $\lambda_0=0$,  all other elements of $\sigma(G)$ may in general not be calculated explicitly. However, we are only interested in the location of  the eigenvalues of $G$, and thus of the roots of Eq. \eqref{eq: char}, in the complex plane relative to the imaginary axis. For this reason, consider the function
\begin{equation}\label{eq: char_general}
\chi(\lambda):=\lambda^{2}+\alpha\,\lambda+\beta[1-e^{-\lambda}]
\end{equation}
where $\alpha,\beta\in\R$. By identifying $\alpha=-h$ and $\beta=h^{2}\,V^{\prime}(c)$, the function $\chi$ clearly coincides with the left-hand side of Eq. \eqref{eq: char}.  Therefore,  we may use the following result about the number of unstable characteristic roots of $\chi$ for parameter values $\alpha< 0<\beta$, in order to analyze the stability of wavefront solutions with constant-speed.
\begin{figure}[ht]
	\includegraphics[width=10cm]{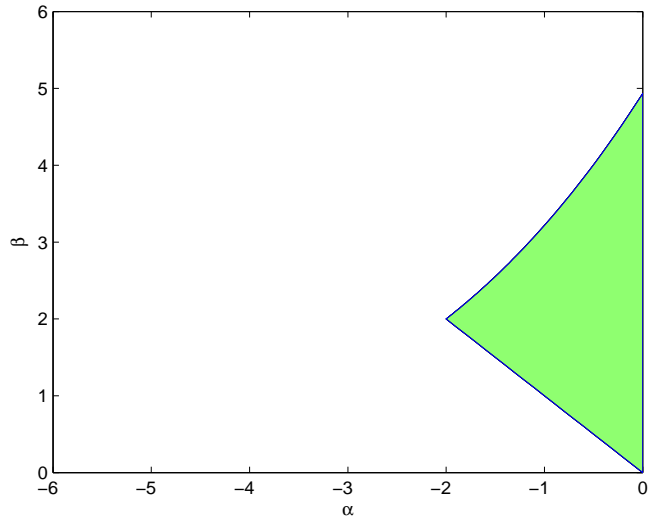}
	\vspace*{-0.5cm}
	\caption{The region $S$ from Proposition \ref{prop: region_S}}\label{fig: S}
\end{figure}
\begin{proposition}\label{prop: region_S}
	Let $S\subset (-\infty,0]\times[0,\infty)$ denote the region, which is bounded by the two lines
	\begin{align*}
	G_{0}&:=\big\lbrace(\alpha,\beta)\in (-\infty,0]\times[0,\infty)\mid \alpha=0,\beta\leq \pi^2/2\big\rbrace,\\
	G_{1}&:=\big\lbrace(\alpha,\beta)\in(-\infty,0]\times[0,\infty)\mid\alpha\geq -2, \alpha+\beta=0\big\rbrace,
	\intertext{and the parametrized curve}
	C_{1}&:=\bigg\lbrace\bigg(-\frac{\nu}{\tan(\nu/2)},\frac{\nu^2}
	{\tan^{2}(\nu/2)\cdot(1+\cos(\nu))}\bigg)\Big\vert 0<\nu<\pi\bigg\rbrace.
	\end{align*}
	
	\begin{itemize}
		\item[(i)] If $(\alpha,\beta)\in S$ then, apart from the simple root $\lambda_{0}=0$, all other roots $\lambda\in\mathbb{C}$ of $\chi$ defined by Eq. \eqref{eq: char_general} are located in the left open half-plane of $\mathbb{C}$.
		\item[(ii)] If $(\alpha,\beta)\in C_{1}$ then, in addition to the simple root $\lambda_{0}=0$, there is a pair $\pm i\omega$, $\omega>0$, of simple pure imaginary roots of $\chi$ given by Eq. \eqref{eq: char_general}. All other roots $\lambda\in \mathbb{C}$ of $\chi$ satisfy $\mathrm{Re}(\lambda)<0$.
		\item[(iii)] If $(\alpha,\beta)\in ((-\infty,0]\times[0,\infty))\backslash\overline{S}$ then there exists at least one root $\lambda\in\mathbb{C}$ of $\chi$ from Eq. \eqref{eq: char_general} with $\mathrm{Re}(\lambda)>0$.
	\end{itemize}
\end{proposition}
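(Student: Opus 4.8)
The plan is to study the characteristic function $\chi(\lambda)=\lambda^{2}+\alpha\lambda+\beta[1-e^{-\lambda}]$ for $\alpha\le 0\le\beta$ by the classical method of tracking how roots cross the imaginary axis as the parameter pair $(\alpha,\beta)$ varies, combined with a direct computation at a convenient reference point. First I would factor out the trivial root: $\chi(0)=0$ always, so write $\chi(\lambda)=\lambda\,\psi(\lambda)$ where $\psi(\lambda)=\lambda+\alpha+\beta\frac{1-e^{-\lambda}}{\lambda}$ (with $\frac{1-e^{-\lambda}}{\lambda}$ understood as its entire extension, value $1$ at $\lambda=0$). Then $\lambda_0=0$ is simple precisely when $\psi(0)=\alpha+\beta\ne 0$, i.e.\ off the line $G_1$; on $G_1$ it is a double (or triple) root, consistent with the statement. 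So the whole problem reduces to locating the roots of $\psi$ relative to the imaginary axis.

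Next I would find the pure-imaginary roots of $\chi$ other than $0$. Setting $\lambda=i\omega$ with $\omega>0$ in $\chi(i\omega)=0$ and separating real and imaginary parts gives two real equations:
\begin{align*}
-\omega^{2}+\beta(1-\cos\omega)&=0,\\
\alpha\omega+\beta\sin\omega&=0.
\end{align*}
From the first, $\beta=\omega^{2}/(1-\cos\omega)$; substituting into the second and using $\sin\omega=2\sin(\omega/2)\cos(\omega/2)$, $1-\cos\omega=2\sin^{2}(\omega/2)$ yields $\alpha=-\omega/\tan(\omega/2)$. A short trigonometric rearrangement of $\beta=\omega^{2}/(1-\cos\omega)$ into the form displayed in the definition of $C_{1}$ (multiplying numerator and denominator so as to introduce $\tan^{2}(\omega/2)(1+\cos\omega)$) shows that, as $\omega$ ranges over $(0,\pi)$, the pair $(\alpha,\beta)$ traces exactly the curve $C_{1}$ — with $\nu=\omega$. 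One also checks that for $\omega\ge\pi$ the resulting $\alpha$ is nonnegative, so no imaginary roots with $\omega>0$ occur for $\alpha<0$ except on $C_{1}$; the endpoint behaviour $\omega\to 0^{+}$ gives $(\alpha,\beta)\to(-2,2)$, the common endpoint of $G_{1}$ and $C_{1}$, and $\omega\to\pi^{-}$ gives $(\alpha,\beta)\to(0,\pi^{2}/2)$, the endpoint of $G_{0}$. Simplicity of $\pm i\omega$ on $C_{1}$ follows by checking $\chi'(i\omega)\ne 0$, a direct computation. Thus $C_{1}\cup G_{0}\cup G_{1}$ is precisely the set in $(-\infty,0]\times[0,\infty)$ at which a root sits on the imaginary axis (for $G_{0}$: $\lambda=i\omega$ with $\omega^{2}=\beta(1-\cos\omega)$ forces, when $\alpha=0$, $\sin\omega=0$, hence $\omega$ a multiple of $\pi$, and feasibility singles out the stated segment; for $G_{1}$: the extra root is again at the origin).

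With the ``crossing set'' $\partial S = C_{1}\cup G_{0}\cup G_{1}$ identified, I would invoke continuity of the (finitely many) roots in the right half-plane $\{\operatorname{Re}\lambda>-\varepsilon\}$ with respect to $(\alpha,\beta)$ — standard for this class of exponential polynomials — so that the number of roots with positive real part is constant on each connected component of $((-\infty,0]\times[0,\infty))\setminus\partial S$. It then suffices to evaluate that number at one point of each component. For the interior of $S$ I would pick a point on the axis $\alpha=0$ with small $\beta>0$, where $\chi(\lambda)=\lambda^{2}+\beta[1-e^{-\lambda}]$ is a small perturbation of $\lambda^2$ and an elementary argument (or a Rouché/argument-principle count on a large semicircle, together with the fact that $\operatorname{Re}\chi(i\omega)=-\omega^2+\beta(1-\cos\omega)<0$ for $\omega\ne 0$ small enough relative to... — more simply, a direct check that no root has $\operatorname{Re}\lambda\ge 0$ other than $0$) gives zero unstable roots; this yields (i). For the complement of $\overline{S}$, choosing e.g.\ a point with $\alpha$ close to $0$ and $\beta$ large, or $\alpha$ very negative, one exhibits a positive real root of $\chi$ directly (for instance $\chi(t)=t^{2}+\alpha t+\beta(1-e^{-t})$ with $\alpha<0$: $\chi(0^{+})<0$ when $\alpha+\beta<0$, while $\chi(t)\to+\infty$, giving a positive root; in the regime $\alpha+\beta>0$ one instead uses a point above $C_{1}$ in $\beta$ and continues from a known unstable configuration) — this yields (iii). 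Part (ii) is immediate once the root at $i\omega$ has been produced and shown simple, together with the fact that $C_{1}$ lies on the boundary of $S$ so that all other roots inherit $\operatorname{Re}\lambda<0$ from the $S$-side by continuity (no other imaginary root is present there).

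The main obstacle is the bookkeeping in the last step: one must be sure that $((-\infty,0]\times[0,\infty))\setminus(C_{1}\cup G_{0}\cup G_{1})$ has exactly the two relevant components (the bounded-in-$\beta$ region $S$ below $C_{1}\cup G_{0}$ and above $G_{1}$, and its complement), that the crossing along $C_{1}$ is transversal so that exactly a conjugate pair moves across (a sign computation of $\frac{d}{d s}\operatorname{Re}\lambda$ along a path crossing $C_{1}$, via implicit differentiation of $\chi(\lambda(s))=0$), and that the two reference evaluations are correct. Verifying transversality and the reference count carefully — rather than the trigonometric identities, which are routine — is where the real work lies; I would likely quote or adapt an existing stability-chart analysis for second-order delay equations of this form to shorten it.
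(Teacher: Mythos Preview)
Your core reduction --- writing $\chi(\lambda)=\lambda\,\psi(\lambda)$ with $\psi(\lambda)=\lambda+\alpha+\beta\,\tfrac{1-e^{-\lambda}}{\lambda}$, so that simplicity of $\lambda_{0}=0$ is equivalent to $\alpha+\beta\ne 0$ and everything else comes down to locating the roots of $\psi$ --- is exactly what the paper does. It sets $D(\lambda,a,b)=\lambda-a-b\,\tfrac{1-e^{-\lambda}}{\lambda}$, observes $\chi(\lambda)=\lambda\,D(\lambda,-\alpha,-\beta)$, and then simply \emph{cites} the stability chart for $D$ worked out in Insperger and St\'ep\'an, \emph{Semi-discretization for time-delay systems}, \S2.1.2. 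Your D-subdivision outline is precisely the method by which that chart is derived, and your own closing remark (``quote or adapt an existing stability-chart analysis'') lands you on the same reference. So the approaches coincide; you have additionally sketched what the paper delegates to the literature.

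One genuine slip in the sketch, should you carry it out rather than cite: the claim that ``for $\omega\ge\pi$ the resulting $\alpha$ is nonnegative, so no imaginary roots with $\omega>0$ occur for $\alpha<0$ except on $C_{1}$'' is false. For $\omega\in(2k\pi,(2k+1)\pi)$ with $k\ge 1$ one has $\omega/2$ in the first or third quadrant modulo $\pi$, hence $\tan(\omega/2)>0$ and $\alpha=-\omega/\tan(\omega/2)<0$ again. This produces further crossing curves $C_{2},C_{3},\dots$ in the quadrant, all lying above $C_{1}$. Thus the complement of the full crossing set has more than two components and the unstable-root count is \emph{not} constant on $((-\infty,0]\times[0,\infty))\setminus\overline{S}$. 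The proposition survives --- every such component still carries at least one root with $\mathrm{Re}\,\lambda>0$ --- but your ``two components, one reference point each'' plan must be replaced by a monotone-crossing argument (each transversal passage through a $C_{k}$ adds two to the count as $\beta$ increases) together with your positive-real-root argument below $G_{1}$. A minor point: your proposed test point for $S$ lies on $G_{0}\subset\partial S$; take $\alpha$ slightly negative instead.
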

\begin{proof}
	Introducing $D:\mathbb{C}\times \R\times\R\to \mathbb{C}$ by
	\begin{equation*}
	D(\lambda,a,b):=\begin{cases}
	\lambda-a-b\frac{1-e^{\lambda}}{\lambda},&\lambda\not=0,\\
	-a-b,&\lambda=0,
	\end{cases}
	\end{equation*}
	we see that
	\begin{equation*}
	\chi(\lambda)=\lambda\cdot D(\lambda,-\alpha,-\beta)
	\end{equation*}
	and so
	\begin{equation*}
	\chi(\lambda)=0\qquad\Leftrightarrow\qquad \left[\,\lambda=0\quad \vee\quad D(\lambda,-\alpha,-\beta)=0\,\right].
	\end{equation*}
	Therefore, it suffices to determine, in dependence on $\alpha$ and $\beta$, the location of the roots $\lambda$ of $D$ in the complex plane. But such an analysis can be found, for instance, in Insperger and St\'ep\'an \cite[Chapter 2.1.2]{Insperger2011}, and this completes the proof.
\end{proof}

\section{Local stability analysis of wavefront solutions with constant-speed}\label{sec: stability}
With the preparatory work of the last section, we are now in the position to analyze the local stability properties of the wavefront solutions $w^{c, d}$ with constant-speed of Eq. \eqref{eq: DDE-canonical}. But before doing so, recall that the zero solution $v^{0}:\R\ni t\mapsto 0\in\R^2$ of Eq. \eqref{eq: DDE_2} is called {\it stable} if and only if for each $\delta>0$ there is some constant $\varepsilon>0$ such that for each $\varphi\in C$ with $\|\varphi\|_C<\delta$ we have $\|v^{\varphi}_t\|_C<\varepsilon$ for all $t\geq 0$. Otherwise, we call the zero solution $v^{0}$ of Eq. \eqref{eq: DDE_2} {\it unstable}. If $v^{0}$ is stable and, additionally, we find some constant $\varepsilon_a>0$ such that for all $\varphi\in C$ with $\|\varphi\|_C<\varepsilon_a$ we have
\begin{equation*}
v^{\varphi}_t\to 0\in C\qquad\text{as}\quad t\to\infty,
\end{equation*}
then the zero solution $v^{0}$ of Eq. \eqref{eq: DDE_2} is called {\it locally asymptotically stable}.

In terms of a wavefront solution $w^{c, d}$ with constant-speed of Eq. \eqref{eq: DDE-canonical}, the above definitions mean the following: The solution $w^{c, d}$ is stable whenever for every $\varepsilon>0$ there exists some $\varepsilon>0$ such that $\|w^{c, d}_0-\varphi\|_C<\delta$ for some $\varphi\in C$ guarantees that
\begin{equation*}
\|w_t^{c, d}-w^{\varphi}_t\|_C<\varepsilon
\end{equation*}
for all $t\geq 0$. Otherwise, the solution $w^{c, d}$ is unstable, that is, there exists some $\varepsilon>0$ such that any neighborhood of $w^{c, d}_0$ in $C$ contains an initial value $\varphi\in C$ with $\|w^{c, d}(t)-w^{\varphi}(t)\|_{\R^2}>\epsilon$ for some $t\geq 0$. Finally, $w^{c, d}$ is locally asymptotically stable when it is stable and, in addition, there is some $\varepsilon>0$ with the property that $\|w^{c, d}_0-\varphi\|_C<\varepsilon_a$ for any $\varphi\in C$ guarantees
\begin{equation*}
w_t^{\varphi}\to w_t^{c, d}\qquad\text{as}\quad t\to\infty.
\end{equation*}

We begin our (local) stability analysis of the wavefront solutions with constant-speed with a result which is hardly surprising in consideration of the translation invariance \eqref{eq: translation_invariance} of $f$.
\begin{proposition}
	Under given assumptions, Eq. \eqref{eq: DDE-canonical} does not have any wavefront solutions with constant-speed which are (locally) asymptotically stable.    
\end{proposition}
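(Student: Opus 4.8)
The plan is to exploit the translation invariance \eqref{eq: translation_invariance} of $f$, which forces every wavefront solution with constant-speed to be surrounded by a one-parameter family of nearby solutions that do not converge back to it. Concretely, fix a wavefront solution $w^{c,d}$ with constant-speed of Eq. \eqref{eq: DDE-canonical}. For every $k\in\R$ the function $w^{c,d+k}$ is also a solution of Eq. \eqref{eq: DDE-canonical} — this follows from Eq. \eqref{eq: constant-speed solution} together with \eqref{eq: translation_invariance}, or equivalently from the fact that adding the constant solution $w^{0,k}$ to any solution again produces a solution, as established in the preliminaries. Its segment at time $0$ is $w^{c,d+k}_0 = w^{c,d}_0 + k\,\hat e_1$, so by choosing $|k|$ small we obtain initial data arbitrarily close (in $C$) to $w^{c,d}_0$, namely $\|w^{c,d}_0 - w^{c,d+k}_0\|_C = |k|$.

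Next I would compute the distance between the two solutions at later times and observe that it never decays to zero. Since $w^{\varphi}$ with $\varphi = w^{c,d+k}_0$ is exactly $w^{c,d+k}$, we have, for every $t\ge 0$,
\begin{equation*}
\|w^{c,d}(t)-w^{c,d+k}(t)\|_{\R^2} = \left\|\begin{pmatrix}-ct+d\\-c\end{pmatrix}-\begin{pmatrix}-ct+d+k\\-c\end{pmatrix}\right\|_{\R^2} = |k|,
\end{equation*}
and consequently $\|w^{c,d}_t - w^{c,d+k}_t\|_C = |k|$ as well. Thus for any prospective attraction radius $\varepsilon_a>0$ one finds, by taking $0<|k|<\varepsilon_a$, an initial value $\varphi = w^{c,d+k}_0$ with $\|w^{c,d}_0-\varphi\|_C < \varepsilon_a$ for which $w^{\varphi}_t \not\to w^{c,d}_t$ as $t\to\infty$, since the segments stay at the fixed positive distance $|k|$ forever. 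Hence the convergence condition in the definition of local asymptotic stability fails, no matter how small a neighborhood is prescribed.

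Therefore $w^{c,d}$ cannot be locally asymptotically stable. Since $w^{c,d}$ was an arbitrary wavefront solution with constant-speed, Eq. \eqref{eq: DDE-canonical} has no such solution that is locally asymptotically stable, which is the claim. I do not expect a genuine obstacle here: the only point requiring a line of care is to phrase things in terms of the translated solution $w^{c,d+k}$ rather than an abstract perturbation, and to note that the identification convention (not distinguishing $w^{c,d_1}$ from $w^{c,d_2}$) does not rescue asymptotic stability, because the definition of stability of the stationary point $\varphi_0 = 0$ of $\tilde F$ is stated in the unreduced space $C$, where the translates are genuinely distinct points. One could equivalently argue on the level of Eq. \eqref{eq: DDE_2}: the constant functions $t\mapsto k\,\hat e_1$ are solutions of Eq. \eqref{eq: DDE_2} (as $L\hat e_1 = 0$, cf. Proposition \ref{prop: simple_zero}), giving stationary points of $\tilde F$ at distance $|k|$ from $\varphi_0$ that plainly do not tend to $\varphi_0$.
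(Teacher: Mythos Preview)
Your argument is correct and essentially identical to the paper's proof: both exploit the translation invariance \eqref{eq: translation_invariance} to produce, for any $\varepsilon_a>0$, a nearby solution (the paper uses the constant perturbation $v^{\varepsilon_a}(t)=(\varepsilon_a/2,0)^T$ of Eq.~\eqref{eq: DDE_2}, you use the translate $w^{c,d+k}$) that stays at fixed positive distance from $w^{c,d}$ for all time. One tiny remark: in your closing parenthetical the relevant fact is $\tilde f(k\hat e_1)=0$ (nonlinear), which follows from \eqref{eq: translation_invariance} rather than from $L\hat e_1=0$; but your main argument already establishes this, so nothing is missing.
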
 
\begin{proof}
	Given $c>0$, $d\in\R$ and a wavefront solution $w^{c, d}$ with constant-speed of Eq. \eqref{eq: DDE-canonical},  consider for any $\varepsilon_a>0$ the constant function
	\begin{equation*}
	v^{\varepsilon_a}:\R\ni t\mapsto \begin{pmatrix}
	\frac{\varepsilon_a}{2} \\0
	\end{pmatrix}\in\R^2.
	\end{equation*}
	Then, by the translation invariance of $f$, $w=w^{c,d}+v^{\varepsilon_a}$
	trivially forms a solution of Eq. \eqref{eq: DDE-canonical} and so $v^{\varepsilon_a}$ a solution of Eq. \eqref{eq: DDE_2}. Indeed, for all $t\in\R$ we have $(v^{\varepsilon_a})^{\prime}(t)=0$ and
	\begin{equation*}
	\tilde{f}(v^{\varepsilon_a}_t)=f(w^{c, d}_t+v_t^{\varepsilon_a})-f(w^{c, d}_t)=f(w_t^{c, d}+\tfrac{\varepsilon_a}{2}\,\hat{e}_1)-f(w^{c, d}_t)=f(w^{c, d}_t)-f(w^{c, d}_t)=0
	\end{equation*}
	with function $\hat{e}_1\in C$ defined by Eq. \eqref{eq: center-direction}. Now $v^{\varepsilon_a}_t=\tfrac{\varepsilon_a}{2}\hat{e}_1$ as $t\geq 0$. In particular, $\|v^{\varepsilon_a}_0\|_C<\varepsilon_a$. But $v^{\varepsilon_a}_t$ clearly does not converge to $0\in C$ as $t\to\infty$. This proves the assertion.
\end{proof}

But it is even more sobering when we assume that the wavefront solutions $w^{c,d}$ with constant-speed lies on the second branch from Theorem \ref{thm: two_branches}:
\begin{theorem}\label{thm: stability of second branch}
	Given $V$ satisfying the standing hypotheses (OVF 1) -- (OVF 4), let $w^{c_2(h), d}$ with $h^{\star}<h<\infty$ and $d\in\R$ denote a wavefront solution with constant-speed of Eq. \eqref{eq: DDE-canonical} belonging to the second branch from Theorem \ref{thm: two_branches}. Then $w^{c_2(h), d}$ is unstable.
\end{theorem}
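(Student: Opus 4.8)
The plan is to use the principle of linearized instability: it suffices to exhibit a characteristic root of Eq.~\eqref{eq: char} with strictly positive real part. Translating to the notation of Proposition~\ref{prop: region_S} via $\alpha=-h$ and $\beta=h^{2}V^{\prime}(c_{2}(h))$, by part~(iii) of that proposition it is enough to show that the point $(\alpha,\beta)=(-h,h^{2}V^{\prime}(c_{2}(h)))$ lies in $\big((-\infty,0]\times[0,\infty)\big)\setminus\overline{S}$. Since along the second branch Theorem~\ref{thm: two_branches} gives $h\,V^{\prime}(c_{2}(h))<1$, we have $\beta=h\cdot\big(h\,V^{\prime}(c_{2}(h))\big)<h=-\alpha$, i.e.\ $\alpha+\beta<0$. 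So the first step is to check that this half-plane condition $\alpha+\beta<0$ already forces $(\alpha,\beta)\notin\overline{S}$.

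To make that precise I would examine the three boundary pieces $G_{0}$, $G_{1}$, $C_{1}$ of $S$ and argue that every point of $\overline{S}$ satisfies $\alpha+\beta\geq 0$. For $G_{0}$ this is immediate since $\alpha=0$ and $\beta\geq 0$; for $G_{1}$ it is immediate since $\alpha+\beta=0$ there. The only real work is the curve $C_{1}$: parametrized by $0<\nu<\pi$ one has $\alpha=-\nu/\tan(\nu/2)$ and $\beta=\nu^{2}/\big(\tan^{2}(\nu/2)(1+\cos\nu)\big)$, and I would show $\alpha+\beta\geq 0$ on $C_{1}$, hence on its closure, and hence (the region $S$ lying ``above'' its lower boundary, cf.\ Figure~\ref{fig: S}) on all of $\overline{S}$. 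Concretely, writing $t=\nu/2\in(0,\pi/2)$ and using $1+\cos\nu=2\cos^{2}t$, one gets $\alpha=-2t/\tan t$ and $\beta=4t^{2}/(\tan^{2}t\cdot 2\cos^{2}t)=2t^{2}/\sin^{2}t$, so $\alpha+\beta=2t\big(t/\sin^{2}t-\cot t\big)=2t\cdot\frac{t-\sin t\cos t}{\sin^{2}t}=2t\cdot\frac{2t-\sin 2t}{2\sin^{2}t}\ge 0$, since $2t-\sin 2t\ge 0$ for $t\ge 0$. Thus $\alpha+\beta\ge 0$ everywhere on the boundary of $S$, and by convexity/connectedness of $S$ together with the description in Proposition~\ref{prop: region_S}, $\alpha+\beta\ge 0$ on all of $\overline S$.

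Combining the two steps: the point associated with $w^{c_{2}(h),d}$ has $\alpha+\beta<0$, so it cannot belong to $\overline{S}$; moreover it does lie in $(-\infty,0]\times[0,\infty)$ because $h>0$ gives $\alpha=-h<0$ and $V^{\prime}(c_{2}(h))\ge 0$ gives $\beta\ge 0$. Hence Proposition~\ref{prop: region_S}(iii) applies and yields a root $\lambda$ of $\chi$, equivalently of the characteristic equation~\eqref{eq: char}, with $\mathrm{Re}(\lambda)>0$. Therefore $\sigma_{s}(G)\neq\varnothing$ (the generator $G$ from Section~\ref{sec: linearization} has an eigenvalue with positive real part), and by the principle of linearized instability for retarded functional differential equations the stationary point $\varphi_{0}=0$ of the semiflow $\tilde F$ is unstable. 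By the correspondence between $\tilde F$ and $F$ established in Section~\ref{sec: linearization}, this means exactly that $w^{c_{2}(h),d}$ is an unstable solution of Eq.~\eqref{eq: DDE-canonical}.

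The main obstacle I anticipate is purely the geometric bookkeeping around the region $S$: one must be sure that ``$(\alpha,\beta)\notin\overline S$'' is genuinely implied by ``$\alpha+\beta<0$'' given only the boundary description, which requires knowing on which side of the curve $C_{1}$ the set $S$ sits. Once the elementary inequality $2t\ge\sin 2t$ pins down the sign of $\alpha+\beta$ along $C_{1}$ (and trivially along $G_{0}$, $G_{1}$), the rest is the routine invocation of Proposition~\ref{prop: region_S}(iii) and the standard linearized-instability theorem, so no further delicate estimates are needed.
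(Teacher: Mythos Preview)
Your proposal is correct and follows essentially the same route as the paper: derive $\alpha+\beta<0$ from $h\,V'(c_2(h))<1$, conclude $(\alpha,\beta)\notin\overline{S}$, and invoke Proposition~\ref{prop: region_S}(iii) together with the principle of linearized instability. The only difference is that the paper treats the implication ``$\alpha+\beta<0\Rightarrow(\alpha,\beta)\notin\overline{S}$'' as evident from the description of $S$ (the segment $G_1$ lies on the line $\alpha+\beta=0$ and bounds $S$ from below), whereas you spell this out by checking $\alpha+\beta\ge 0$ on each boundary piece, including the explicit computation along $C_1$; your extra care is harmless and the computation is correct.
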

\begin{proof}
	Under given assumptions, $h\,V(c_{2}(h))=c_{2}(h)$ and $h\,V^{\prime}(c_{2}(h))<1$ due to Theorem \ref{thm: two_branches}. Multiplying the last inequality with $h$, we see $h^{2}\,V^{\prime}(c_{2}(h))<h$. Hence, for $\beta=h^{2}\,V^{\prime}(c_{2}(h))$ and $\alpha=-h$ it trivially follows $\beta<-\alpha$, and therefore $(\alpha,\beta)\in ((-\infty,0]\times[0,\infty))\backslash \overline{S}$ with the region $S$ from Proposition \ref{prop: region_S}. Consequently, by assertion (iii) of Proposition \ref{prop: region_S}, there is some $\lambda\in\mathbb{C}$ with
	\begin{equation*}
	\lambda^2-h\,\lambda+h^{2}\,V^{\prime}(c_{2}(h))[1-e^{\lambda}]=
	\lambda^{2}+\alpha+\beta[1-e^{-\lambda}]=0
	\end{equation*}
	and $\mathrm{Re}(\lambda)>0$. Therefore, the so-called principle of linearized instability, compare, for instance, the first part of Theorem 6.8 in Diekmann et al. \cite[Chapter VII]{Diekmann1995}, shows that the zero solution of Eq. \eqref{eq: DDE_2}, and so the solution $w^{c_2(h), d}$ of Eq. \eqref{eq: DDE-canonical}, is unstable.
\end{proof}

The question about the stability of wavefront solutions with constant-speed of Eq. \eqref{eq: DDE-canonical} which lie on the first branch of Theorem \ref{thm: two_branches} is more sophisticated. Note that for such a solution $w^{c_1(h), d}$, $h^{*}<h<\hat{h}$ and $d\in\R$, necessarily $hV^{\prime}(c_1(h))>1$ holds. Hence, for the corresponding parameter pair $\alpha=-h$ and $\beta=h^2 V^{\prime}(c_1(h))$ we have $\beta+\alpha>0$ such that each of the three cases $(\alpha,\beta)\in S$, $(\alpha,\beta)\in C_1$, and $(\alpha,\beta)\in ((-\infty,0]\times[0,\infty))\setminus \overline{S}$ from Proposition \ref{prop: region_S} may occur. In the last case the linearization has an eigenvalue with positive real part and therefore, similarly to the proof of our last result, the principle of linearized instability shows that $w^{c_1(h), d}$ is unstable. In the other two cases, the linearization does not have any eigenvalues with positive real part but at least one eigenvalue on the imaginary axis. Consequently, in these situations the solution $w^{c_1(h), d}$ may be stable or, more exactly, it has the same local stability properties as the zero solution of the ordinary differential equation obtained from a so-called center manifold reduction. Below we address this issue partially by carrying out a \textit{center manifold reduction} for the case where the eigenvalue $\lambda_0=0$ is simple and the only one in $\sigma_c(G)$. But let us first introduce local center manifolds of Eq. \eqref{eq: DDE_2} at the stationary solution $v(t)=0$, $t\in\R$, in general. 

To begin with, write Eq. \eqref{eq: DDE_2} in the form
\begin{equation}\label{eq: DDE normal}
v^{\prime}(t)=L\,v_{t}+r(v_{t})
\end{equation}
with separated linear part $L=Df(w^{c, d}_t)=D\tilde{f}(0)\in \mathcal{L}(C,\R^{2})$ and the nonlinear part
\begin{equation*}\label{eq: nonlinear}
\begin{aligned}
r(\varphi)&:=\tilde{f}(\varphi)-L\varphi\\
&=\begin{pmatrix}
0\\
h^{2}\,V(c+\varphi_{1}(-1)-\varphi_{1}(0))-h^{2}\,V^{\prime}(c)(\varphi_{1}(-1)-
\varphi_{1}(0))-h\,c
\end{pmatrix}.
\end{aligned}
\end{equation*}
As, regardless of the particular wavefront solution $w^{c, d}$ with constant-speed, we always have $\lambda_0=0\in \sigma_c(G)$, it follows that the center space $C_c\subset C$ is not the zero space but has at least dimension one. Therefore, the center manifold theory for delay differential equations as, for instance, may be found in Diekmann et al. \cite[Chapter IX]{Diekmann1995}, shows the existence of a non-trivial, so-called \textit{local center manifold} $W_c\subset C$ of Eq. \eqref{eq: DDE normal}, or equivalently of Eq. \eqref{eq: DDE_2}, at the stationary solution $v(t)=0$, $t\in\R$. To be more precisely, there exist an open neighborhood $C_{c,0}$ of $0$ in the center space $C_{c}$, an open neighborhood $C_{su,0}$ of $0$ in the stable-unstable space $C_{su}:=C_{s}\oplus C_{u}$, and a continuously differentiable map $w_{c}:C_{c,0}\to  C_{su,0}$ with $w_{c}(0)=0$ and $Dw_{c}(0)=0$ such that
the graph
\begin{equation*}
W_{c}:=\left\lbrace\psi+w_{c}(\psi)\mid\psi\in C_{c,0}\right\rbrace
\end{equation*}
of the so-called \textit{reduction map} $w_c$ has the following properties:
\begin{itemize}
	\item[(i)] $W_{c}$  is a $C^1$- smooth submanifold of $C$ and $\dim W_{c}=\dim C_{c}$;
	\item[(ii)] $W_{c}$ is positively invariant with respect to the semiflow $\tilde{F}$; that is, if $\varphi\in W_{c}$ and $t>0$ such that $\tilde{F}(s,\varphi)\in C_{c,0}\oplus C_{su,0}$ for all $0\leq s\leq t$, then
	\begin{equation*}
	\lbrace \tilde{F}(s,\varphi)\mid 0\leq s\leq t\rbrace\subset W_{c}.
	\end{equation*}
	\item[(iii)] $W_{c}$ contains the segments of all solutions of Eq. \eqref{eq: DDE normal} which are defined on $\R$ and have all their segments in $C_{c,0}\oplus C_{su,0}$.
\end{itemize}

In general, such a local center manifold of a differential equation is not unique. Moreover, in the most cases it is rarely possible to represent the reduction map in a completely explicit way. However, as we show next, in the case of Eq. \eqref{eq: DDE_2} the last point is easily done, provided the linearization of the underlying wavefront solution with constant-speed has only the zero eigenvalue on the imaginary axis and its algebraic multiplicity is one.
\begin{proposition}\label{prop: trivial center-manifold}
	Let $w^{c, d}$, $c>0$ and $d\in\R$, denote a wavefront solution with constant-speed of Eq. \eqref{eq: DDE-canonical} such that $\sigma_c(G)=\lbrace 0\rbrace$ and $h V^{\prime}(c)\not=1$ holds. Then $C_{c}=\R\,\hat{e}_1$ and $w_{c}(\psi)=0$ for all $\psi\in C_{c,0}$.
\end{proposition}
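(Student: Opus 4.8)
The plan is to establish the two claims separately, beginning with the easy identification of the center space and then arguing that the reduction map must vanish identically. First I would show $C_c = \R\,\hat{e}_1$. By hypothesis $\sigma_c(G) = \{0\}$, so the center space is exactly the realified generalized eigenspace $N$ of $G$ associated with $\lambda_0 = 0$. Proposition~\ref{prop: simple_zero} already gives $\R\,\hat{e}_1 \subseteq N$. On the other hand, since $h\,V^\prime(c) \neq 1$, the discussion preceding Proposition~\ref{prop: simple_zero} tells us that $\lambda_0 = 0$ is a \emph{simple} eigenvalue of $G$, so $\dim N = 1$. Combining these, $N = \R\,\hat{e}_1$, hence $C_c = \R\,\hat{e}_1$ and $\dim W_c = 1$.

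Next I would exhibit an explicit one-parameter family of solutions of Eq.~\eqref{eq: DDE_2} whose segments fill an entire neighborhood of $0$ in $C_c$ and lie on $\R$. For each $k \in \R$ the constant function $v^k : \R \ni t \mapsto k\,\hat{e}_1(0) = (k,0)^{\mathsf T} \in \R^2$ is a globally defined solution of Eq.~\eqref{eq: DDE_2}: indeed $(v^k)^\prime(t) = 0$ and, by the translation invariance \eqref{eq: translation_invariance} of $f$,
\begin{equation*}
\tilde{f}(v^k_t) = f(w^{c,d}_t + k\,\hat{e}_1) - f(w^{c,d}_t) = f(w^{c,d}_t) - f(w^{c,d}_t) = 0.
\end{equation*}
Moreover $v^k_t = k\,\hat{e}_1 \in \R\,\hat{e}_1 = C_c$ for all $t \in \R$, so every segment of $v^k$ already lies in $C_c$ (in particular in $C_{c,0}$ once $|k|$ is small). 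By property (iii) of the local center manifold, all segments $v^k_t$ with $|k|$ small enough belong to $W_c$. Writing such a segment via the graph representation, $k\,\hat{e}_1 = \psi + w_c(\psi)$ for a unique $\psi \in C_{c,0}$; since $k\,\hat{e}_1 \in C_c$ and the decomposition $C = C_c \oplus C_{su}$ is direct with $w_c(\psi) \in C_{su,0}$, uniqueness forces $\psi = k\,\hat{e}_1$ and $w_c(k\,\hat{e}_1) = 0$. As $k$ ranges over a small interval, the points $k\,\hat{e}_1$ range over a full neighborhood of $0$ in the one-dimensional space $C_c$, so $w_c$ vanishes on (a neighborhood that we may take to be all of) $C_{c,0}$; shrinking $C_{c,0}$ if necessary we conclude $w_c(\psi) = 0$ for all $\psi \in C_{c,0}$.

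I do not anticipate a serious obstacle here; the argument is essentially bookkeeping once one has the translation-invariance trick. The only point requiring a little care is making sure the explicit solutions $v^k$ genuinely have \emph{all} their segments inside the neighborhood $C_{c,0} \oplus C_{su,0}$ on which the center manifold is defined, so that property (iii) applies — but this is immediate because $v^k_t \equiv k\,\hat{e}_1$ is constant in $t$ and lies in $C_c$ itself, so it suffices to pick $|k|$ small enough that $k\,\hat{e}_1 \in C_{c,0}$. A secondary subtlety is that a local center manifold is not unique; however, the conclusion $w_c \equiv 0$ is forced for \emph{any} choice of reduction map by property (iii) together with the directness of the splitting, so no ambiguity remains. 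Finally, I would remark that with $w_c \equiv 0$ the reduced equation on $C_c = \R\,\hat{e}_1$ is simply $\dot\rho = 0$, which has the (non-asymptotically) stable zero solution, matching the picture that such wavefront solutions are at best stable but never asymptotically stable.
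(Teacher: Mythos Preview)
Your proof is correct and follows essentially the same route as the paper: identify $C_c=\R\,\hat e_1$ via simplicity of $\lambda_0=0$ and Proposition~\ref{prop: simple_zero}, then use the constant solutions $v^k_t\equiv k\,\hat e_1$ together with property~(iii) of the local center manifold and the direct sum $C_c\oplus C_{su}$ to force $w_c(k\,\hat e_1)=0$. The only superfluous step is the ``shrinking $C_{c,0}$ if necessary'': since for any $\psi=k\,\hat e_1\in C_{c,0}$ the constant solution already has \emph{all} segments equal to $\psi\in C_{c,0}\subset C_{c,0}\oplus C_{su,0}$, property~(iii) applies directly and you obtain $w_c(\psi)=0$ for every $\psi\in C_{c,0}$ without any shrinking.
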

\begin{proof}
	1. Under given assumptions, $\lambda_0=0$ is clearly a simple eigenvalue and the associated one-dimensional eigenspace coincides with the center space $C_c$. Consequently, from Proposition \ref{prop: simple_zero} it trivially follows that $C_{c}=\R\,\hat{e}_1$, and this shows the first part of the assertion.
	
	2. For the proof of the second part of the assertion, let arbitrary $\psi\in C_{c,0}$ be given. Then, by the last part, there is some $k\in\R$ with $\psi=k\,\hat{e}_1$. Now, observe that the function
	\begin{equation*}
	v(t):=\begin{pmatrix}
	k\\0
	\end{pmatrix},\qquad t\in\R,
	\end{equation*}
	is a global solution of Eq. \eqref{eq: DDE normal} and it has the segments $v_{t}=k\,\hat{e}_1=\psi$ as $t\in\R$. In particular, $v_t\in C_{c,0}\oplus C_{su,0}$ for all $t\in\R$. Hence, property (iii) of the associated local center manifold $W_c$ implies that for each $t\in\R$ we have $v_{t}\in W_{c}$, that is, $v_{t}=P_{c} v_{t}+w_{c}(P_{c} v_{t})$ where $P_c$ denotes the continuous projection $P_c$ of $C$ along $C_{su}$ onto the center space $C_c$. All in all, it follows that
	\begin{equation*}
	k\,\hat{e}_1=v_{t}=P_{c}v_{t}+w_{c}(P_{c}v_{t})
	\end{equation*}
	and so
	\begin{equation*}
	C_{c}\ni k\,\hat{e}_1-P_{c}v_{t}=w_{c}(P_{c}v_{t})\in C_{su}.
	\end{equation*}
	Since $C_{c}\cap C_{su}=\lbrace 0\rbrace$ we conclude first that $P_{c}v_{t}=k \hat{e}_1=\psi$ and then $w_c(\psi)=w_c(k\hat{e}_1)=w_{c}(P_{c} v_{t})=0$, which finishes the proof.
\end{proof}
So, under the conditions of the last result, a local center manifold $W_{c}$ of Eq. \eqref{eq: DDE normal} at the stationary solution $v(t)=0$, $t\in\R$, just coincides with the neighborhood $C_{c,0}$ of the origin in the center space $C_{c}$. Furthermore, the dynamics induced by Eq. \eqref{eq: DDE normal} on $W_c$ is the most simplest one:
\begin{proposition}
	Under the assumption of Proposition \ref{prop: trivial center-manifold}, the reduction of Eq. \eqref{eq: DDE normal} to a local center manifold $W_{c}$ is given by the scalar ordinary differential equation
	\begin{equation*}
	p^{\prime}(t)=0.
	\end{equation*}
\end{proposition}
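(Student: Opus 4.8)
The plan is to combine the explicit description of the local center manifold from Proposition \ref{prop: trivial center-manifold} with the known structure of the semiflow $\tilde{F}$ on $W_{c}$. Since we are in the situation $\sigma_{c}(G)=\{0\}$ and $hV^{\prime}(c)\not=1$, Proposition \ref{prop: trivial center-manifold} tells us that $C_{c}=\R\,\hat{e}_{1}$ and that the reduction map vanishes, so $W_{c}=C_{c,0}\subset\R\,\hat{e}_{1}$. Hence a point of $W_{c}$ is just $p\,\hat{e}_{1}$ for $p$ in some open interval around $0$, and the reduced equation is an ODE for the scalar coordinate $p=p(t)$ obtained by identifying $\psi=p\,\hat{e}_{1}\in C_{c,0}$ with the real number $p$.

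First I would recall that, by positive invariance of $W_{c}$ (property (ii)) together with $w_{c}\equiv 0$, whenever $\tilde{F}(s,p\,\hat{e}_{1})$ stays in the coordinate neighborhood it must again lie in $\R\,\hat{e}_{1}$; so there is a scalar $p(s)$ with $\tilde{F}(s,p\,\hat{e}_{1})=p(s)\,\hat{e}_{1}$ and $p(0)=p$. The reduced vector field is then $p\mapsto P_{c}\,\tilde{f}(p\,\hat{e}_{1})$ expressed in the coordinate on $C_{c}$; equivalently, differentiating $v(s)=p(s)\,\hat{e}_{1}(0)$ and using $v^{\prime}(s)=\tilde{f}(v_{s})$, one gets $p^{\prime}(s)\,\hat{e}_{1}(0)=\tilde{f}(p(s)\,\hat{e}_{1})$. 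The key computation is therefore just to evaluate $\tilde{f}$ on a constant function in the direction $\hat{e}_{1}$. But $\tilde{f}(k\,\hat{e}_{1})=f(w^{c,d}_{t}+k\,\hat{e}_{1})-f(w^{c,d}_{t})=0$ for every $k\in\R$, exactly by the translation invariance \eqref{eq: translation_invariance} of $f$ (this is precisely the computation already carried out in the proof of the preceding proposition, where $v(t)\equiv(k,0)^{T}$ was shown to solve Eq. \eqref{eq: DDE normal}). Consequently $p^{\prime}(s)\,\hat{e}_{1}(0)=0$, i.e. $p^{\prime}(s)=0$, which is the claimed reduced equation.

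I expect no real obstacle here; the statement is essentially a corollary of Proposition \ref{prop: trivial center-manifold} and the translation invariance. The only points that require a little care are bookkeeping ones: fixing the identification between the neighborhood $C_{c,0}\subset\R\,\hat{e}_{1}$ and an interval in $\R$ via the coordinate $p$, and noting that $\hat{e}_{1}(0)=(1,0)^{T}\not=0$ so that $p^{\prime}(s)\,\hat{e}_{1}(0)=0$ genuinely forces $p^{\prime}(s)=0$ rather than being vacuous. One can also observe, as a consistency check, that the constant functions $v(t)\equiv k\,\hat{e}_{1}(0)$ are exactly the global solutions sitting in $W_{c}$, and each has $p(t)\equiv k$, in agreement with $p^{\prime}\equiv 0$; indeed every orbit on $W_{c}$ is a single equilibrium of the reduced flow, which is consistent with the translation symmetry being responsible for the whole center direction.

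\begin{proof}
	By Proposition \ref{prop: trivial center-manifold} we have $C_{c}=\R\,\hat{e}_{1}$ and $w_{c}\equiv 0$, so a local center manifold is $W_{c}=C_{c,0}$, an open neighborhood of $0$ in $\R\,\hat{e}_{1}$. Identify $C_{c,0}$ with an open interval $J\subset\R$ containing $0$ via $p\mapsto p\,\hat{e}_{1}$. Given $p\in J$, as long as the segments $\tilde{F}(s,p\,\hat{e}_{1})$ remain in $C_{c,0}\oplus C_{su,0}$, property (ii) of $W_{c}$ together with $w_{c}\equiv 0$ gives $\tilde{F}(s,p\,\hat{e}_{1})\in W_{c}\subset\R\,\hat{e}_{1}$; hence there is a real $p(s)$ with $v_{s}=\tilde{F}(s,p\,\hat{e}_{1})=p(s)\,\hat{e}_{1}$ and $p(0)=p$, where $v$ denotes the corresponding solution of Eq. \eqref{eq: DDE normal}. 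In particular $v(s)=p(s)\,\hat{e}_{1}(0)=p(s)\,(1,0)^{T}$, so $v$ is continuously differentiable in $s$ and $v^{\prime}(s)=p^{\prime}(s)\,(1,0)^{T}$. On the other hand, using Eq. \eqref{eq: DDE normal} and the translation invariance \eqref{eq: translation_invariance} of $f$,
	\begin{equation*}
	v^{\prime}(s)=\tilde{f}(v_{s})=\tilde{f}(p(s)\,\hat{e}_{1})=f\big(w^{c,d}_{t}+p(s)\,\hat{e}_{1}\big)-f\big(w^{c,d}_{t}\big)=0\in\R^{2}.
	\end{equation*}
	Comparing the two expressions for $v^{\prime}(s)$ and using $(1,0)^{T}\not=0$, we obtain $p^{\prime}(s)=0$. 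Thus the reduction of Eq. \eqref{eq: DDE normal} to the local center manifold $W_{c}$, written in the coordinate $p$ on $C_{c}=\R\,\hat{e}_{1}$, is the scalar ordinary differential equation $p^{\prime}(t)=0$.
\end{proof}
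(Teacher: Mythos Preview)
Your proof is correct and rests on exactly the same observation as the paper's: the right-hand side vanishes identically on $\R\,\hat{e}_{1}$ by translation invariance, and since $w_{c}\equiv 0$ the reduced vector field is just this right-hand side evaluated on $C_{c}$. The only difference is one of packaging. The paper quotes the standard form of the reduced equation from Diekmann et al., namely $p^{\prime}(t)=Q_{c}\big(p(t)\hat{e}_{1}+w_{c}(p(t)\hat{e}_{1})\big)$ with $Q_{c}=\gamma\circ r$, and then observes $r(p\,\hat{e}_{1})=0$. You instead derive the reduced flow by hand: use positive invariance of $W_{c}$ together with $w_{c}\equiv 0$ to force $v_{s}=p(s)\hat{e}_{1}$, then differentiate and plug into $\tilde{f}$. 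Your route is slightly more self-contained (no black-box formula needed), while the paper's is a line shorter once one accepts the cited formula; substantively they are the same argument.
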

\begin{proof}
	By the center manifold theory, as, for instance, contained in Diekmann et al. \cite{Diekmann1995}, and the last proposition, in the situation considered here the center manifold reduction reads
	\begin{equation*}
	p^{\prime}(t)=Q_{c}\left(p(t)\hat{e}_1+w_{c}(p(t)\hat{e}_1)\right)=
	Q_{c}(p(t)\hat{e}_1+0)=Q_{c}(p(t)\hat{e}_1)
	\end{equation*}
	where $Q_c:C_{c,0}\to\R$ denotes the composition $Q_{c}=\gamma\circ r$ of a linear operator $\gamma:\R^2\to\R$ and the nonlinearity $r$ of Eq. \eqref{eq: DDE normal}. Now, note that for all $p\in\R$ we have $r(p\,\hat{e}_1)=0$. Hence,
	it follows that $Q_{c}(p(t)\hat{e}_1)=0$ and thus $p^{\prime}(t)=0$ as claimed.
\end{proof}

With the statement above we are now in the position to determine the local stability properties of almost all wavefront solutions with constant-speed of Eq. \eqref{eq: DDE-canonical}  along the first branch from Theorem \ref{thm: two_branches}.
\begin{theorem}\label{thm: stability of first branch}
	Given $V$ with  (OVF 1) -- (OVF 4), let $w^{c_1(h), d}$, $h^{*}<h<\hat{h}$ and $d\in\R$, denote a wavefront solution with constant-speed of Eq. \eqref{eq: DDE-canonical} lying on the first branch from Theorem \ref{thm: two_branches}, and let $S\subset (-\infty,0]\times[0,\infty)$ denote the bounded region introduced in Proposition \ref{prop: region_S}. 
	
	Then, if $(-h,h^2 V^\prime(c_1(h)))\in S$ then $w^{c_1(h), d}$ is stable, whereas in the case  $(-h,h^2V^\prime(c_1(h)))\in ((-\infty,0]\times[0,\infty))\setminus\overline{S}$ the solution $w^{c_1(h), d}$ is unstable.
\end{theorem}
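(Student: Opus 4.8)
The plan is to distinguish two cases according to where the parameter pair $(\alpha,\beta):=\left(-h,\,h^2\,V^\prime(c_1(h))\right)$ sits relative to $\overline{S}$, and to convert in each case the spectral information provided by Proposition \ref{prop: region_S} into a statement about the semiflow $\tilde{F}$, exactly in the spirit of the proof of Theorem \ref{thm: stability of second branch}. The first step is to record that, for this choice of $\alpha$ and $\beta$, the function $\chi$ from Eq. \eqref{eq: char_general} is precisely the left-hand side of the characteristic equation \eqref{eq: char} of the linearization \eqref{eq: linear DDE} along $w^{c_1(h),d}$, so that the roots of $\chi$ coincide with the eigenvalues of the generator $G$.

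For the instability assertion I would argue verbatim as in the proof of Theorem \ref{thm: stability of second branch}: if $(\alpha,\beta)\in\left((-\infty,0]\times[0,\infty)\right)\setminus\overline{S}$, then part (iii) of Proposition \ref{prop: region_S} yields a root $\lambda\in\C$ of $\chi$ with $\mathrm{Re}(\lambda)>0$, that is, an eigenvalue of $G$ in the open right half-plane, so the principle of linearized instability -- see the first part of Theorem 6.8 in Diekmann et al.\ \cite[Chapter VII]{Diekmann1995} -- shows that the zero solution of Eq. \eqref{eq: DDE_2}, and hence $w^{c_1(h),d}$, is unstable.

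For the stability assertion, assume $(\alpha,\beta)\in S$. By part (i) of Proposition \ref{prop: region_S}, $G$ then has no eigenvalue with positive real part, and $\lambda_0=0$ -- which is simple -- is its only eigenvalue on the imaginary axis; in particular $\sigma_c(G)=\{0\}$ and the subspace $C_u$ of $C$ associated with eigenvalues in the open right half-plane is trivial. Since $w^{c_1(h),d}$ lies on the first branch, Theorem \ref{thm: two_branches} gives $h\,V^\prime(c_1(h))>1$, in particular $h\,V^\prime(c_1(h))\not=1$, so Proposition \ref{prop: trivial center-manifold} applies: $C_c=\R\,\hat{e}_1$, the reduction map vanishes identically, and a local center manifold $W_c$ coincides with the neighborhood $C_{c,0}\subset\R\,\hat{e}_1$ of the origin; moreover, by the proposition following Proposition \ref{prop: trivial center-manifold}, the reduction of Eq. \eqref{eq: DDE normal} to $W_c$ is the scalar equation $p^\prime(t)=0$, whose zero solution is (Lyapunov) stable. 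The concluding step is to invoke the reduction principle for retarded functional differential equations -- applicable because $C_u=\{0\}$ and the rest of the spectrum lies strictly in the open left half-plane -- in order to transfer the stability of the reduced equation to Eq. \eqref{eq: DDE_2}, and so to $w^{c_1(h),d}$; the relevant statement is contained in Diekmann et al.\ \cite[Chapter IX]{Diekmann1995}.

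The step I expect to demand the most care is this last one: one must check that the version of the reduction (Pliss/Shoshitaishvili) principle being cited really does cover \emph{Lyapunov} stability -- not only asymptotic stability -- of a non-hyperbolic equilibrium in the infinite-dimensional delay setting, with $C_u=\{0\}$ and $C_s$ infinite-dimensional but spectrally bounded away from the imaginary axis. A secondary point worth a remark is that the intermediate case $(\alpha,\beta)\in C_1$ is deliberately left out here, since there $\sigma_c(G)\not=\{0\}$ and the explicit trivial center manifold of Proposition \ref{prop: trivial center-manifold} is no longer available; handling it would require actually computing a two- or three-dimensional reduced equation.
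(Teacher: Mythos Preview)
Your proposal is correct and follows essentially the same route as the paper: linearized instability for the case $(\alpha,\beta)\notin\overline{S}$, and for $(\alpha,\beta)\in S$ the combination of Proposition~\ref{prop: region_S}(i), the trivial center manifold of Proposition~\ref{prop: trivial center-manifold}, the reduced equation $p'(t)=0$, and the reduction/attractivity principle from \cite[Chapter~IX]{Diekmann1995}. Your explicit verification that $h\,V'(c_1(h))>1$ (hence $\neq 1$) so that Proposition~\ref{prop: trivial center-manifold} applies, and your caveat about the Pliss-type reduction principle covering Lyapunov stability, are details the paper leaves implicit but which are worth spelling out.
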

\begin{proof}
	Set $\alpha=-h$ and $\beta=h^2 V^\prime(c_1(h))$. Provided $(\alpha,\beta)\in ((-\infty,0]\times[0,\infty))\setminus \overline{S}$, the solution $w^{c_1(h), d}$ is clearly unstable as discussed after Theorem \ref{thm: stability of second branch} and its proof.
	
	Now, assume $(\alpha,\beta)\in S$ and then recall from Proposition \ref{prop: region_S} that in this case $\lambda_0=0$ is a simple eigenvalue of the linearization whereas for all other $\lambda\in\sigma(G)$ we have $\mathrm{Re}(\lambda)<0$. In particular, there is no unstable direction. Therefore, the local center manifold $W_c$ is attractive as, for instance, discussed in Section IX.8 of Diekmann et al. \cite{Diekmann1995}. Consequently, stability assertions for the zero solution of the center manifold reduction carry over to stability assertions for the stationary solution $v(t)=0$, $t\in\R$, of Eq. \eqref{eq: DDE normal}, or equivalently, of Eq. \eqref{eq: DDE_2}. Now,  by the last proposition the center manifold reduction is given by the ordinary differential equation $p^{\prime}(t)=0$, and here the zero solution is clearly stable. This proves the stability of the zero solution of Eq. \eqref{eq: DDE_2}, and thus of solution $w^{c_1(h), d}$ of Eq. \eqref{eq: DDE-canonical}. 
\end{proof}
\begin{remark}\label{rem: bifurcation}
	Observe that the last proposition contains no statement about the stability properties of solution $w^{c_1(h), d}$ with $(-h, h^2 V^\prime(c_1(h)))\in\partial S$. In this case, the associated linearization does not have any eigenvalues with positive real part but, in addition to the simple eigenvalue $\lambda_0=0$, a pair $\pm i\omega$, $\omega>0$, of simple pure imaginary eigenvalues due to Proposition \ref{prop: region_S}. As we will discuss in the next section, it seems that here Eq. \eqref{eq: DDE-canonical} undergoes a degenerate Hopf bifurcation.
\end{remark}

\section{Numerical examples and discussion}
After all the analytical work in the last sections, in the following we consider some numerical examples demonstrating our results. In doing so, we will also briefly address some aspects arising from our simulations. For the numerical calculations we use the solver routine \textit{dde23} of the computing environment MATLAB \cite{MATLAB} with the relative error tolerance of $10^{-9}$ and the absolute error tolerance of $10^{-12}$. The optimal velocity function considered throughout this section is the example $V=V_q$ defined by Eq. \eqref{eq: OVF} with some maximum velocity  $V^{\max}>0$ and safety distance $d_S=0$.

\subsection*{Example 1}\label{subsec: example 1}
In our first example, we consider Eq. \eqref{eq: DDE}, and so Eq. \eqref{eq: DDE-canonical}, for para\-meter $h=h_e:=0.2$ and maximum velocity $V^{\max}=100$. Then, a simple calculation shows that the real
\begin{equation*}
c=c_e:=\frac{h\, V^{\max}}{2}-\sqrt{\frac{(h\, V^{\max})^2}{4}-1}\approx 0.0501
\end{equation*}
satisfies condition \eqref{eq: constant-speed condition}, that is, $c=h\,V(c)$, for the existence of a wavefront solution with constant-speed. Hence, for each $d>0$, the function $w^{c_e, d}:\R\to\R^2$ defined by Eq. \eqref{eq: constant-speed solution} is a solution of Eq. \eqref{eq: DDE-canonical}, and the first component of $w^{c, d}$, that is, $z(t)=-c_e\, t+d$, $t\in\R$, forms a solution of the scalar differential equation \eqref{eq: DDE} whose first derivate is constant and negative.
\begin{figure}[h]
	\includegraphics[width=6cm]{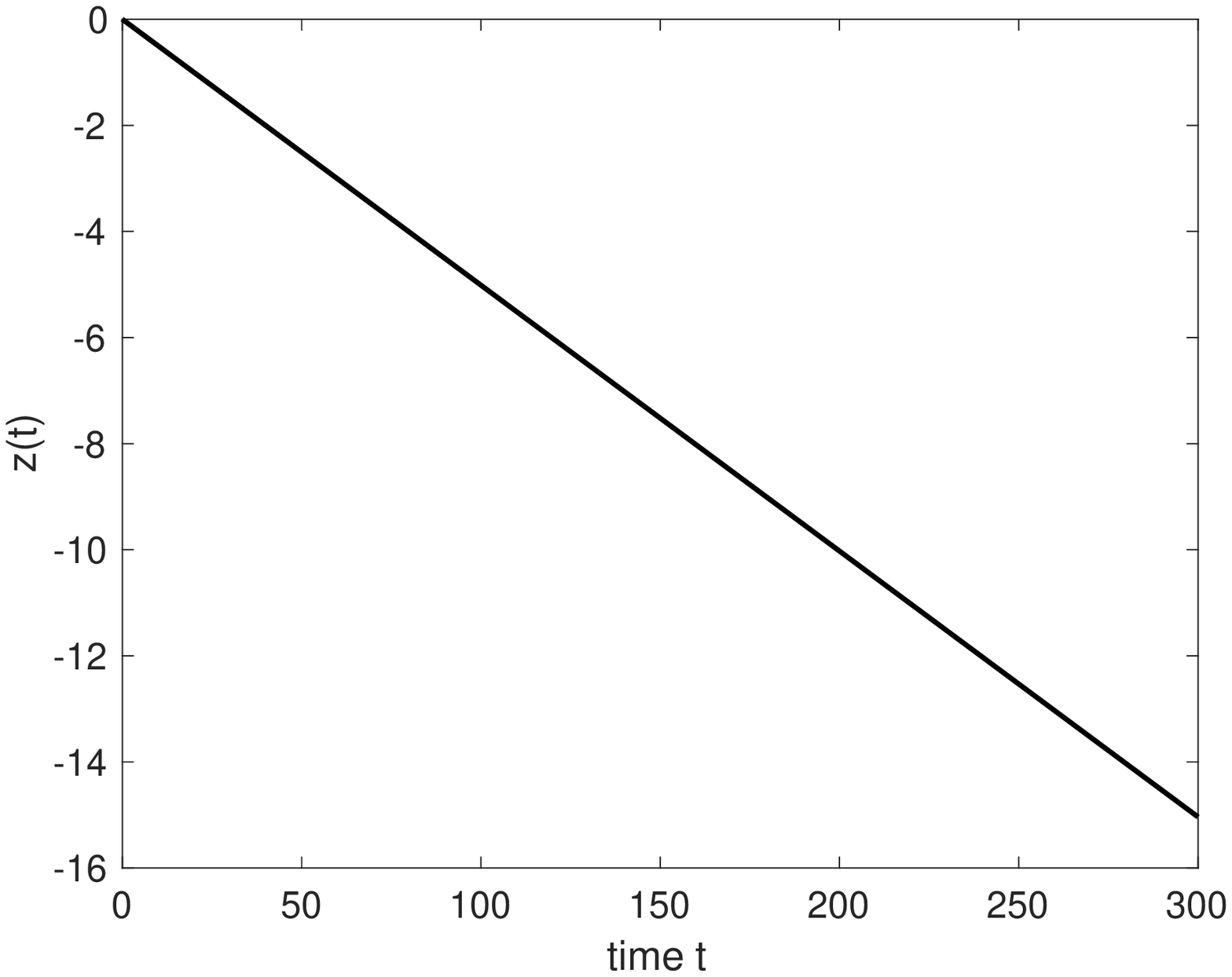}
	\includegraphics[width=6cm]{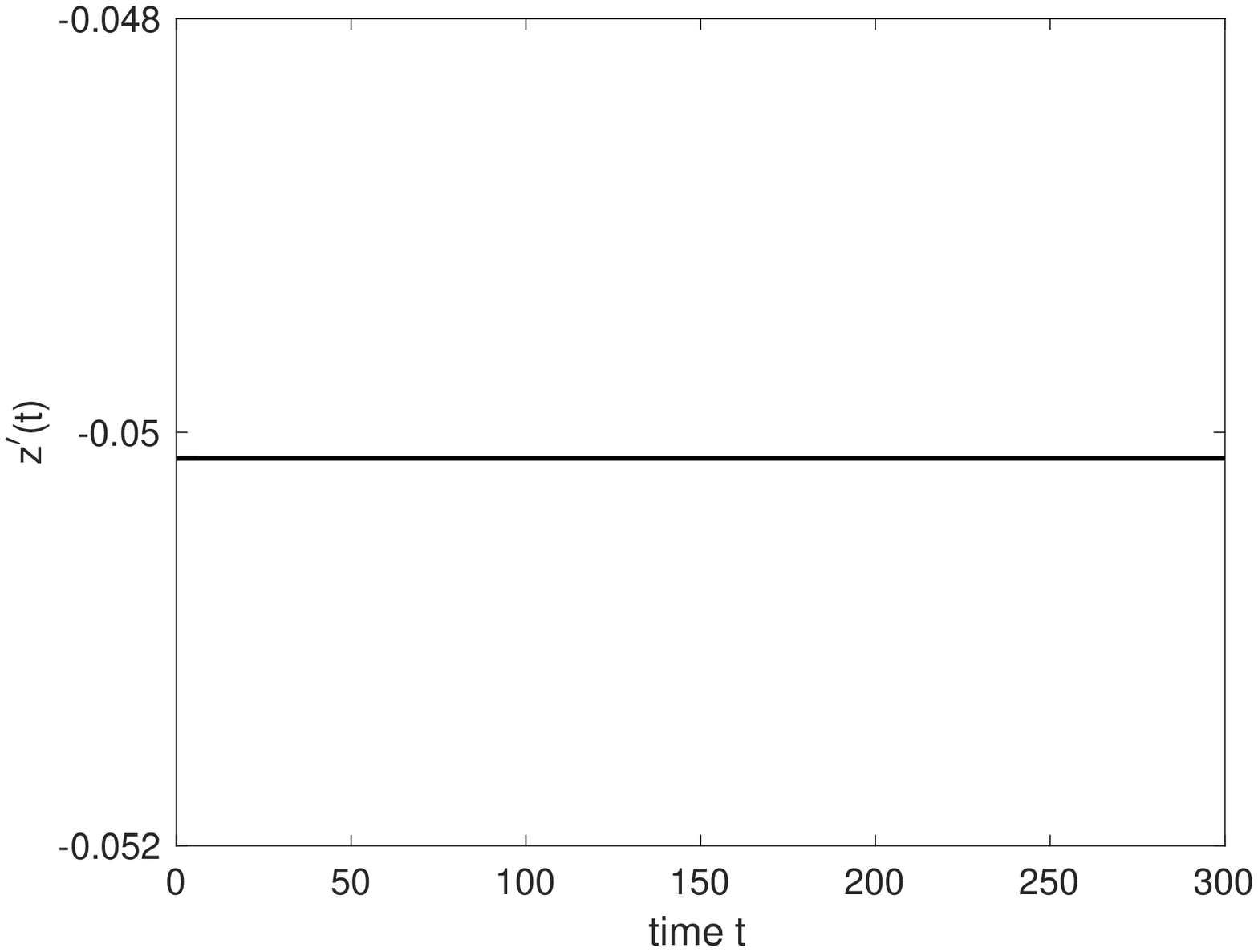}
	\caption{Numerically calculated solution $z$ and its first derivative from Example 1 ($c\approx 0.0501$, $h=0.2$, and $V^{\max}=100$)}\label{fig: example1}
\end{figure}
After fixing $d=0$ and implementing the segment $w^{c_e, 0}_0$ as initial function for the numerical integration, the simulation leads to Figure \ref{fig: example1} which shows the computed solution and its first derivative. 

Numerically, the solution $z$ seems to be stable. For instance, setting $c^*_e=c_e-0.005$ and starting with the initial function $[-1,0]\ni s\mapsto (-c^*_e\,s,-c^*_e)^{T}\in\R^2$ results in the figure below which indicates that the computed solution $z^*$ does not only remain in a small neighborhood of $z$ but actually is attracted by $z$. Indeed, a calculation of the associated stability parameters  from Proposition \ref{prop: region_S} results in $\alpha=\alpha_e:=-0.2$ and $\beta=\beta_e:=h_e^2\,V^\prime(c_e)\approx 0.39899$. Hence, we see at once that $w^{c_e, 0}$, and so solution $z$ of Eq. \eqref{eq: DDE}, is located on the first branch $c_1$ from Theorem \ref{thm: two_branches}, and that, in view of $(\alpha_e, \beta_e)\in S$, it is locally stable due to Theorem \ref{thm: stability of first branch}.
\begin{figure}[h]
	\includegraphics[width=6cm]{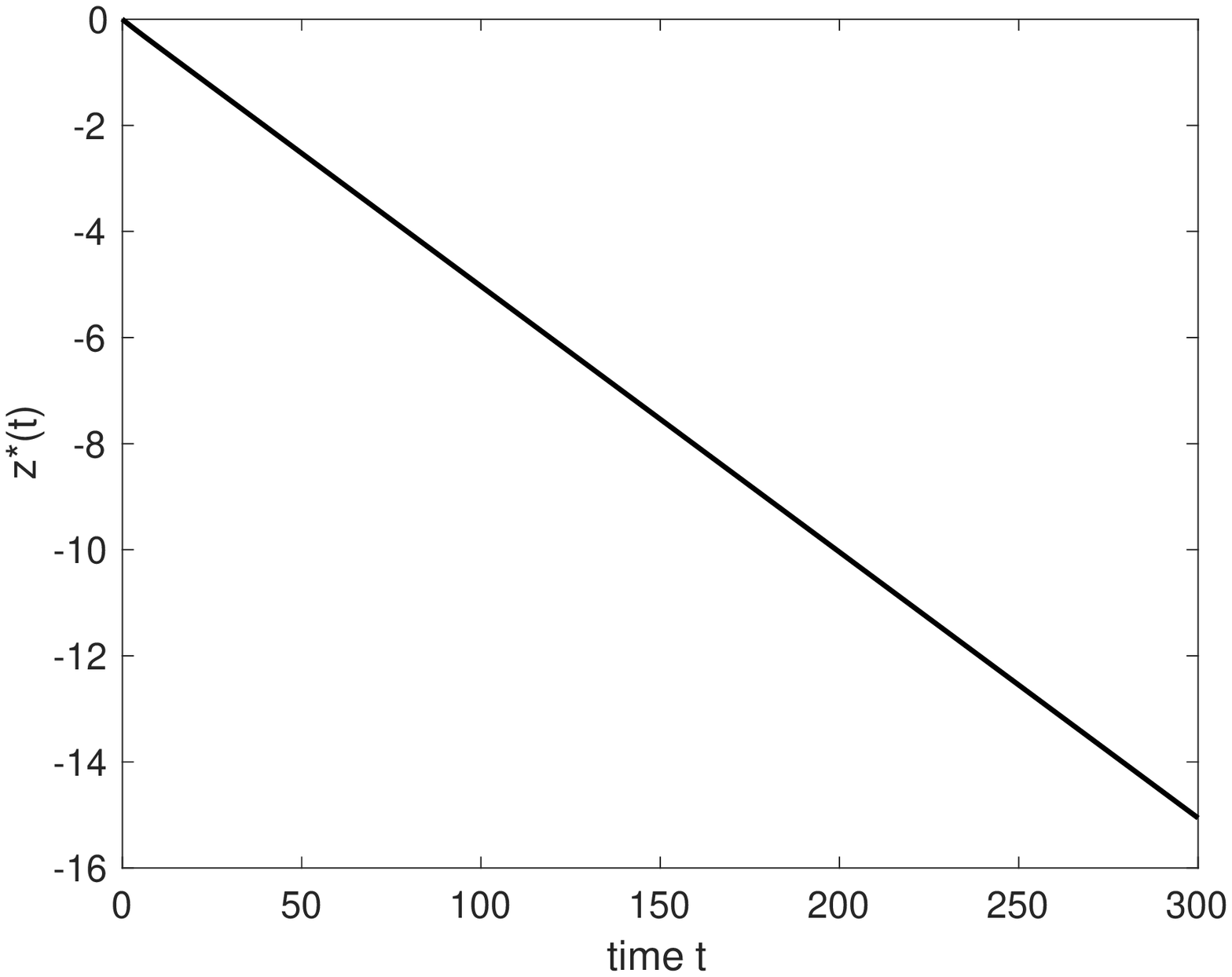}
	\includegraphics[width=6cm]{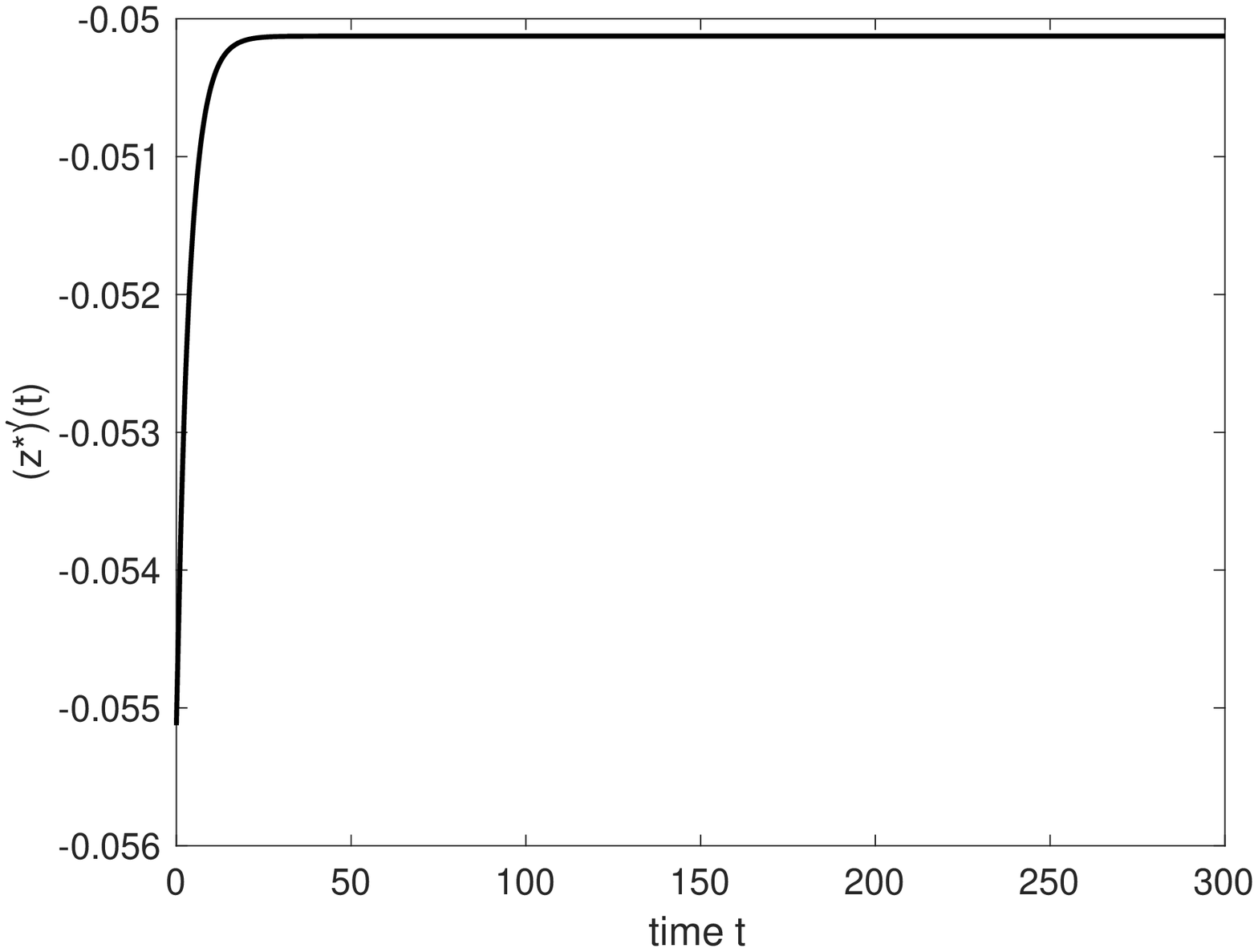}
	\caption{Numerical computation of the disturbed solution $z^*$ and its first derivative from Example 1 ($c^{*}_e\approx 0.0451$)}\label{fig: example1_2}
\end{figure}

\subsection*{Example 2} In this example, we consider Eq. \eqref{eq: DDE-canonical}, and so Eq. \eqref{eq: DDE}, for the same parameter $h=h_e:=0.2$ and the same maximum velocity $V^{\max}=100$ as in the last example. But now we take the real
\begin{equation*}
c=c_e:=\frac{h\, V^{\max}}{2}+\sqrt{\frac{(h\, V^{\max})^2}{4}-1}\approx 19.9499
\end{equation*}
satisfying condition \eqref{eq: constant-speed condition} for the existence of a wavefront solution with constant-speed. Observe that, in consideration of our analysis in Section \ref{sec: existence}, the associated wavefront solution $w^{c_e, 0}$ with constant-speed, and so solution $z=-c_e\,t$, $t\in\R$, of Eq. \eqref{eq: DDE}, necessarily belongs to the second branch $c_2$ from Theorem \ref{thm: two_branches} and thus is unstable due to Theorem \ref{thm: stability of second branch}. Of course, the instability of $z$ is also apparent in numerical simulations. The figure below shows the computed solution for the initial value $w^{c_e, 0}_0$ which theoretically should lead to the solution $z$ for all time under consideration.  
\begin{figure}[h]
	\includegraphics[width=6cm]{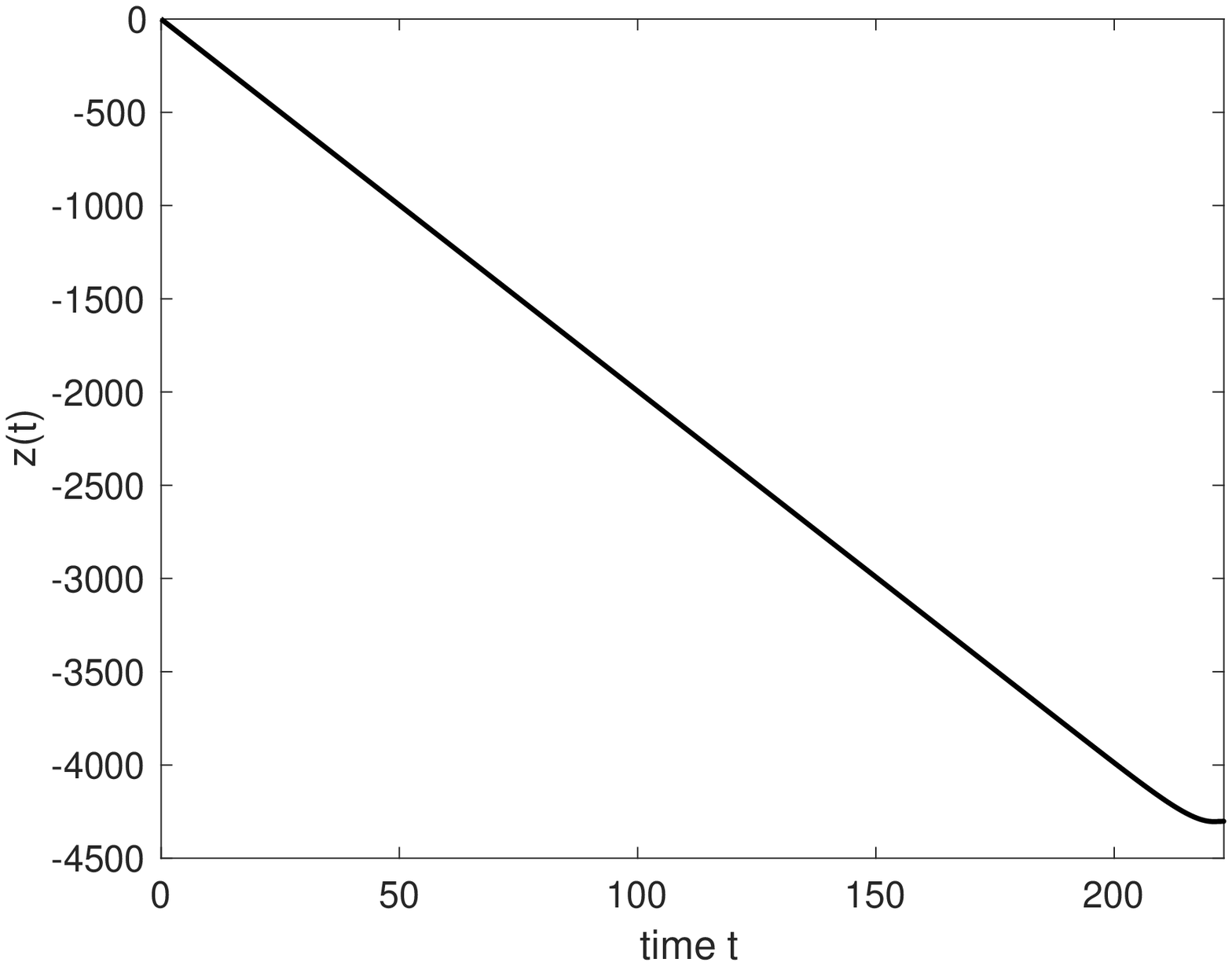}
	\includegraphics[width=6cm]{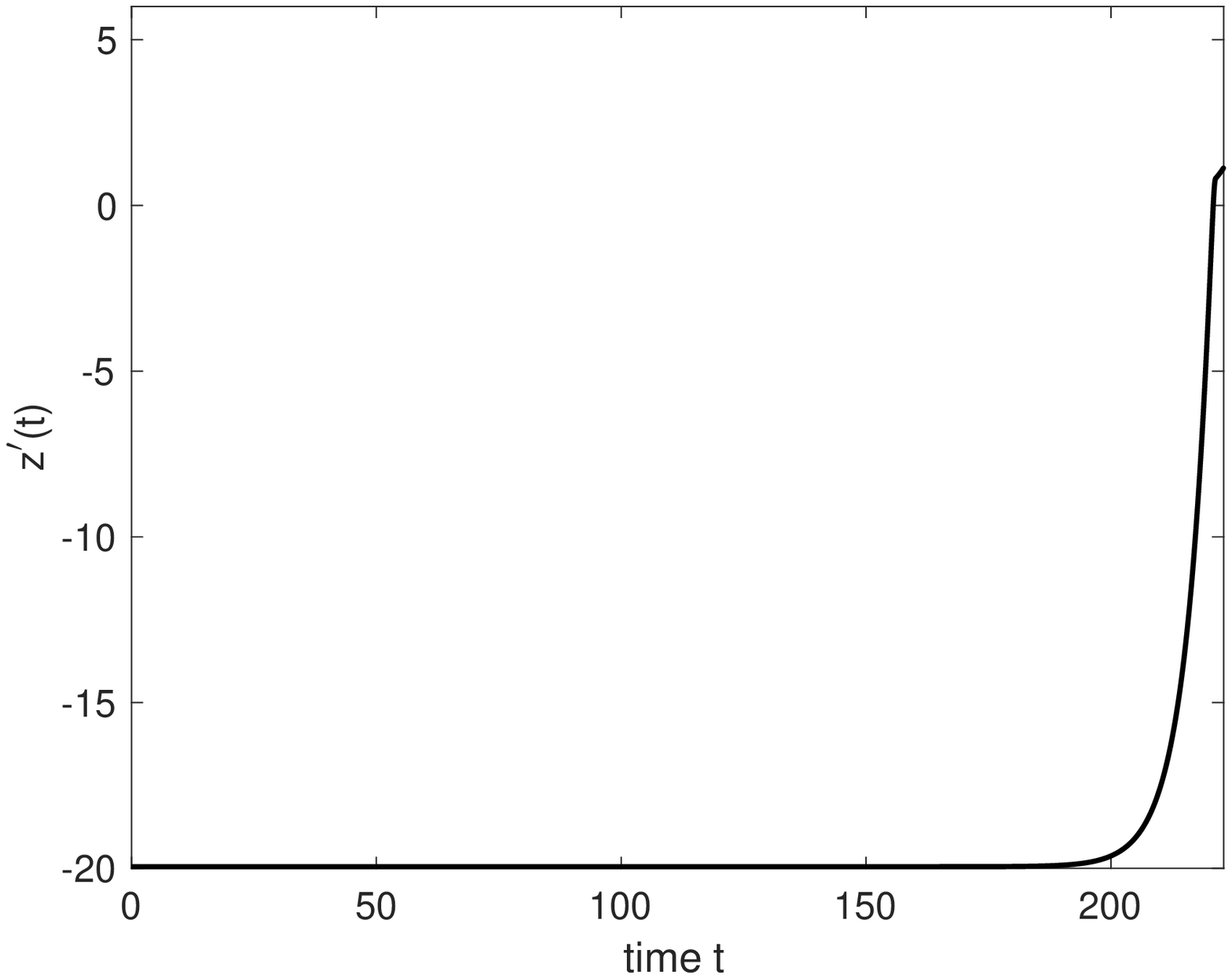}
	\caption{Numerical computation of solution $z$ and its first derivative from Example 2 ($c\approx 19.9499$, $h=0.2$, and $V^{\max}=100$)}\label{fig: example2}
\end{figure}
It seems, also supported by the scale of the axes, that at first the numerically computed solution coincides with $z$ for a (long) while as expected.  However, finally we end up with something else. And the reason here is the interplay between the instability of the solution $z$ and the rounding in the floating point arithmetic. To be more precisely, at some time, the rounding in the floating point arithmetic first leads to the fact that the computed solution leaves the quasi-stationary case and  ``jumps'' to some other orbit of Eq. \eqref{eq: DDE} in the immediate vicinity of the orbit of $z$. Then, the instability of the quasi-stationary solution $z$ ``forces'' the numerical computed solution to leave all sufficiently small neighborhoods of $z$. At the end, the simulation shown in Figure \ref{fig: example2} does not meet the solution $z$ of Eq. \eqref{eq: DDE} subject to the initial function $w_0^{c_e, 0}$. Moreover, it is irrelevant with respect to the car-following model given by Eq. \eqref{eq: traffic_model} as the computed solution is clearly not strictly decreasing at all (but strictly increasing on some interval with length greater than $2h$).

\subsection*{Example 3}
In our final example, we consider again the situation of an unstable wavefront solution with constant-speed but this time for parameter $h=h_e:=1.5$ and maximum velocity $V^{\max}=2.841$. Then,
\begin{equation*}
c=c_e:=\frac{h\, V^{\max}}{2}-\sqrt{\frac{(h\, V^{\max})^2}{4}-1}\approx 0.2492
\end{equation*}
fulfills $h_e\,V(c_e)=c_e$ such that $w^{c_e, 0}$ forms a wavefront solution with constant-speed of Eq. \eqref{eq: DDE-canonical}, and, accordingly, $z(t)=-c_e\,t$, $t\in\R$, a quasi-stationary solution of Eq. \eqref{eq: DDE}. 
\begin{figure}[h]
	\includegraphics[width=6cm]{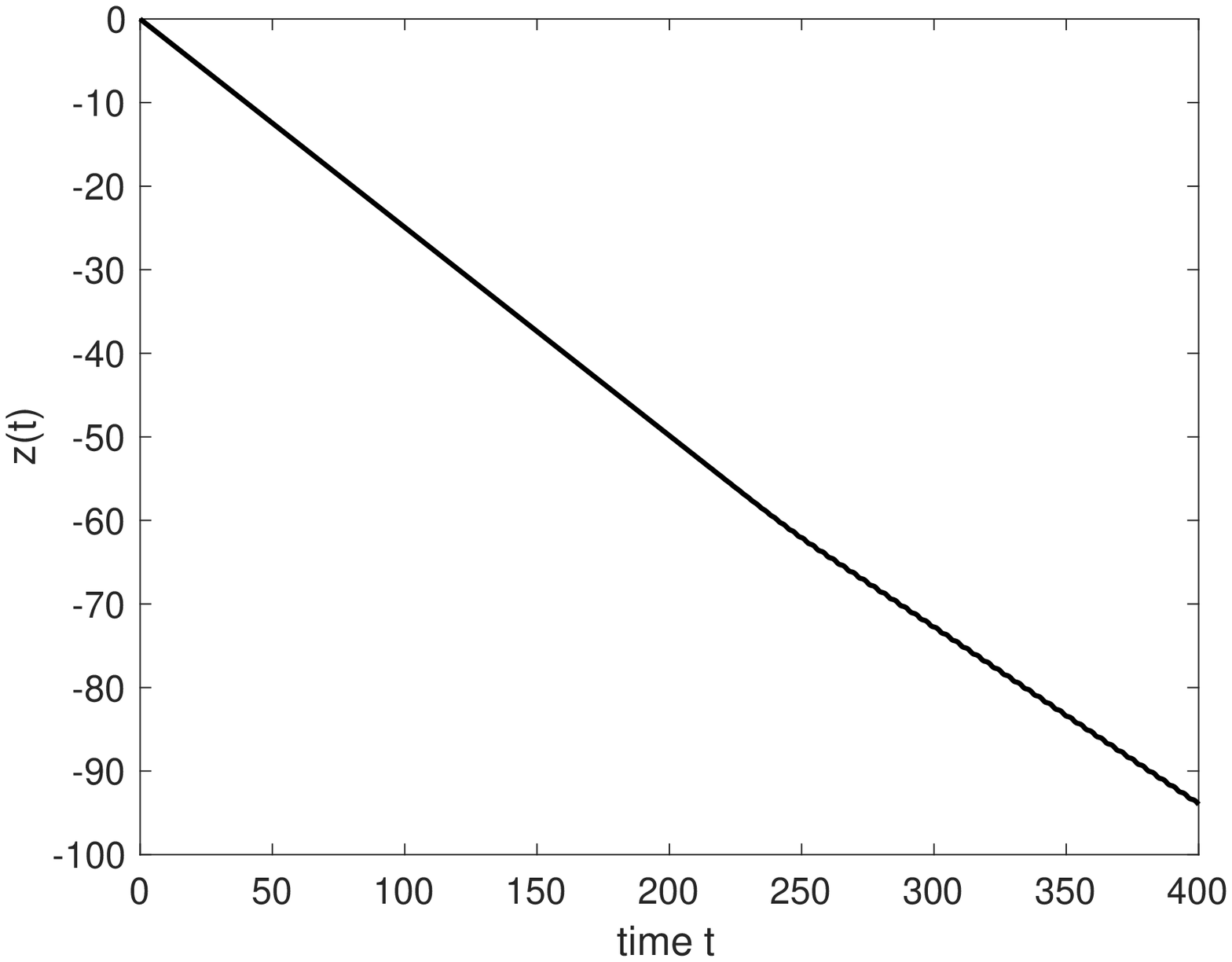}
	\includegraphics[width=6cm]{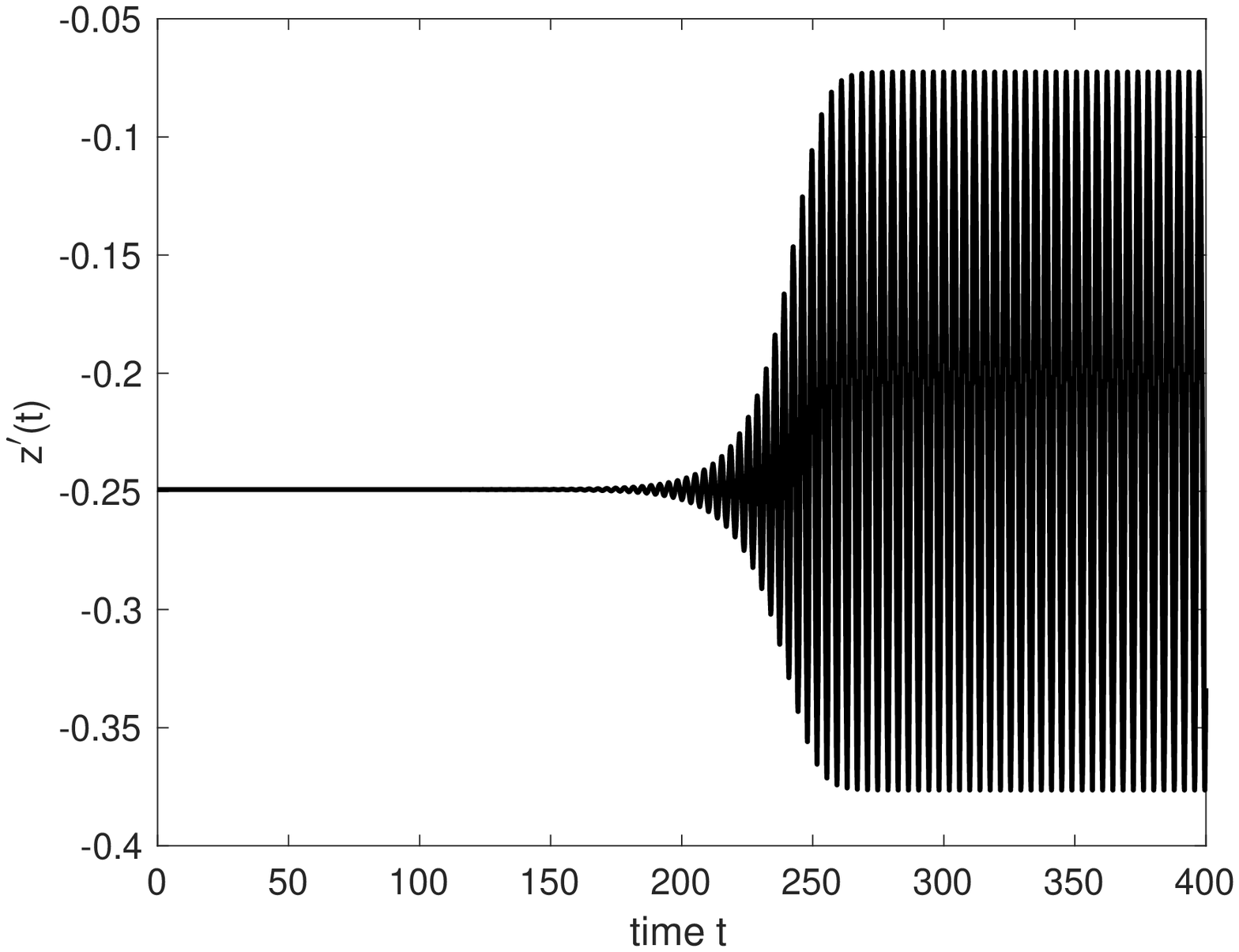}
	\caption{Numerical computation of solution $z$ and its first derivative from Example 3 ($c\approx 0.2492$, $h=1.5$, and $V^{\max}=2.841$)}\label{fig: example 3}
\end{figure}
The corresponding stability parameters are $\alpha=\alpha_e:=-1.5$ and $\beta=\beta_e:=h_e^2\, V(c_e)\approx 2.8245$, and it is easily seen that the solution under consideration belongs to the first branch $c_1$ from Theorem \ref{thm: two_branches}, and that, in view $(\alpha_e,\beta_e)\not\in \overline{S}$, it is unstable due to Theorem \ref{thm: stability of first branch}. We choose $w^{c_e, 0}_0$ as initial function and compute the solution $z$ numerically. The result of this computation is shown in Figure \ref{fig: example 3}, and, apparently, it is completely different in nature as in the example before. At the initial stage of the simulation the computed solution seems, similarly to the last example, to coincides with $z$. But then, caused by the rounding in the floating point arithmetic and the instability of $z$, the computed solution leaves the quasi-stationary state, and its first derivative begins to oscillate about the value $-c_e$ with decreasing minimal and increasing maximal value. After reaching some thresholds for the minimal and maximal value, the oscillation becomes completely regular such that, in the final stage of the simulation, the computed solution is uniform, and its first derivative not only uniform but periodic. Compare here also Figure \ref{fig: example 3_2} showing the final stage of the numerical computation.
\begin{figure}[h]
	\includegraphics[width=6cm]{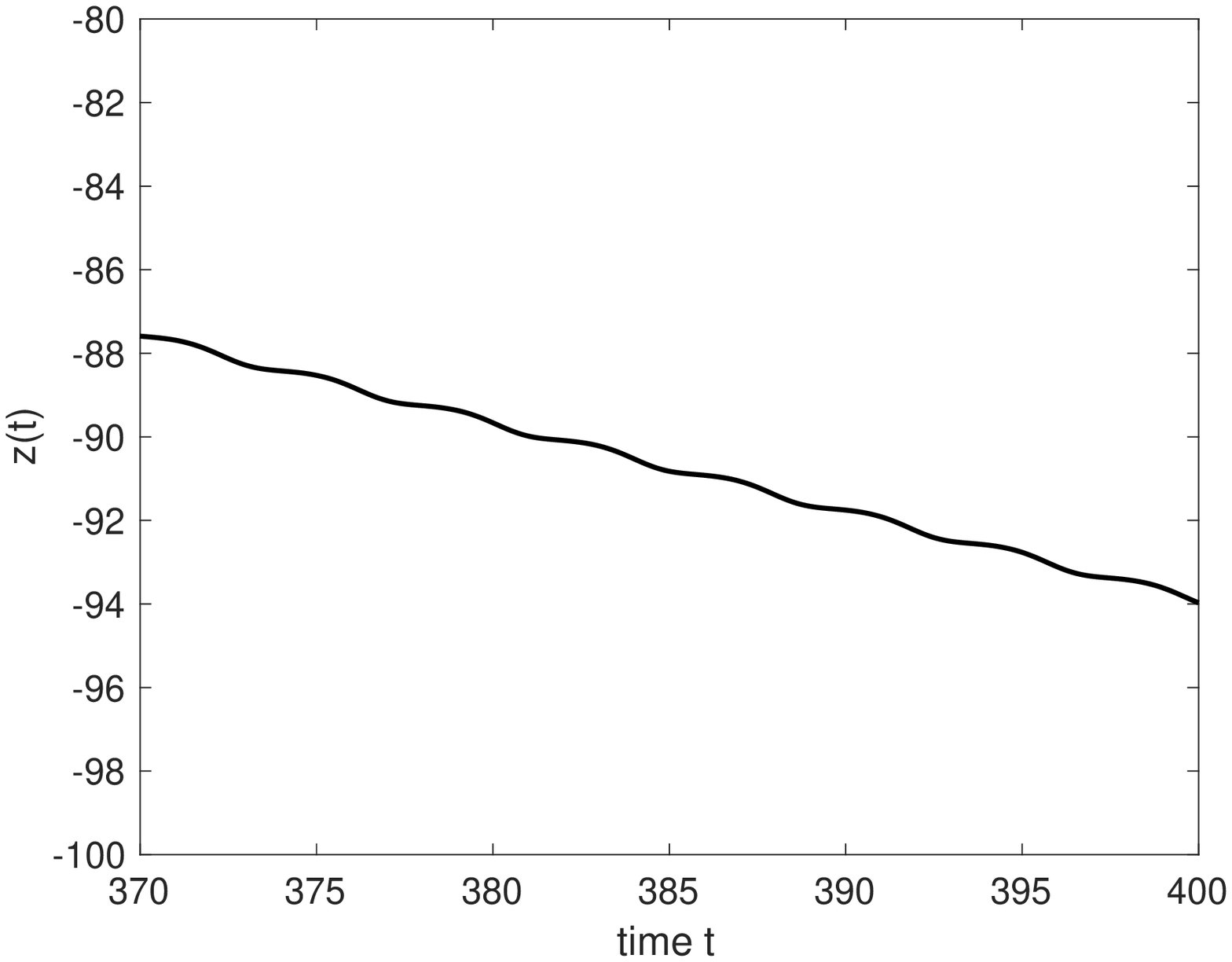}
	\includegraphics[width=6cm]{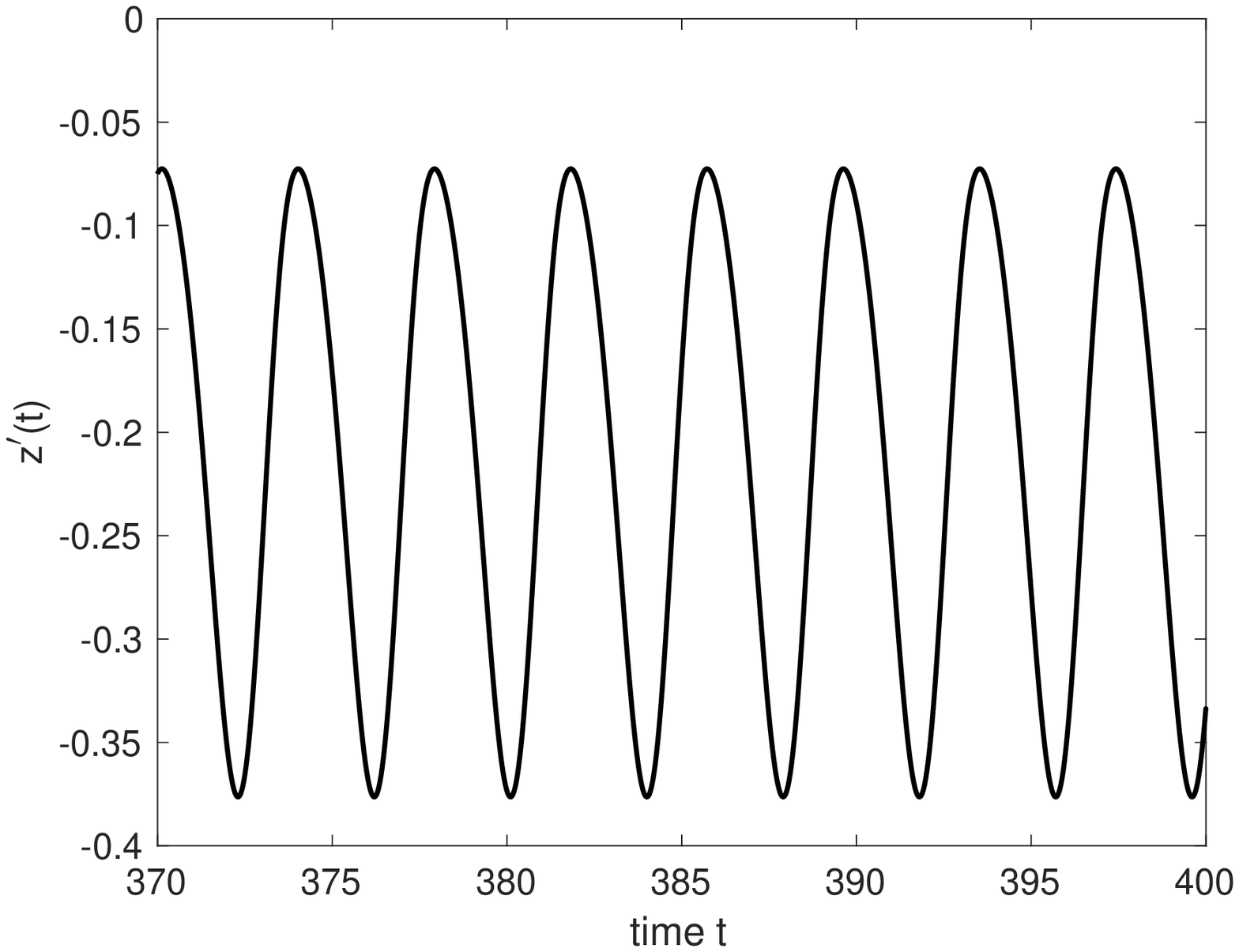}
	\caption{The final stage of the numerical computation of solution $z$ and its first derivative from Example 3 ($c\approx 0.2492$, $h=1.5$, and $V^{\max}=2.841$)}\label{fig: example 3_2}
\end{figure}

Most likely, the above figure shows a real solution of Eq. \eqref{eq: DDE}, but, of course, not subject to the initial value $w^{c_e, 0}_0$. A rough explanation for what seemingly happens here was indicated in Remark \ref{rem: bifurcation}. But let us specify it more precisely by the following conjecture which shall be addressed analytically in \cite{Stumpf2016}.
\begin{conjecture}\label{con}
	Given $C^2$-smooth $V$ satisfying  (OVF 1) -- (OVF 4), let $w^{c_1(h_H), d}$, $h^{*}<h_H<\hat{h}$ and $d\in\R$, denote a wavefront solution with constant-speed of Eq. \eqref{eq: DDE-canonical} lying on the first branch from Theorem \ref{thm: two_branches}, and let $S\subset (-\infty,0]\times[0,\infty)$ denote the bounded region introduced in Proposition \ref{prop: region_S}. Further, suppose that, for the associated stability parameters of solution $w^{c_1(h_H), d}$, it holds that $(-h_H,h_H^2\,V^{\prime}(c_1(h_H)))\in\partial S$.
	
	Then, at parameter value $h=h_H$ and solution $w=w^{c_1(h_H, d)}$, Eq. \eqref{eq: DDE-canonical}, and so Eq. \eqref{eq: DDE} at parameter $h=h_H$ and solution $z=-c_1(h_H)\, t+d$, undergoes a degenerate Hopf-bifurcation which is supercritical. The bifurcating solutions are not periodic but their first derivatives.
\end{conjecture}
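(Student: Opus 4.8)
\emph{Proof proposal.} The strategy is to convert the translation invariance \eqref{eq: translation_invariance} into a tool. Modulo the neutral direction $\R\,\hat{e}_1$ the degenerate eigenvalue configuration of Proposition~\ref{prop: region_S}(ii) collapses to a single simple pair $\pm i\omega$ on the imaginary axis, so that a \emph{classical} Hopf bifurcation governs the local dynamics; the resulting periodic orbit is then lifted back to Eq.~\eqref{eq: DDE}, where the drift it inherits along $\R\,\hat{e}_1$ makes $z$ non-periodic while leaving $z'$ periodic.

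First I would make the symmetry explicit and pass to a quotient. Exactly as for $f$ and $F$, the map $\tilde f$ satisfies $\tilde f(\psi+k\,\hat{e}_1)=\tilde f(\psi)$ for all $k\in\R$, the line $\R\,\hat{e}_1$ consists of stationary points of Eq.~\eqref{eq: DDE_2}, and the semiflow $\tilde F$ commutes with the $\R$-action $\psi\mapsto\psi+k\,\hat{e}_1$; hence $\tilde F$ descends to a semiflow $\bar{\tilde F}$ on $C/\R\,\hat{e}_1$. Since $h\,V'(c_1(h_H))>1\neq 1$ along the first branch (Theorem~\ref{thm: two_branches}), the zero eigenvalue of the generator $G$ is simple with eigenspace exactly $\R\,\hat{e}_1$ (Proposition~\ref{prop: simple_zero}), so the generator of the quotient semigroup has spectrum $\sigma(G)\setminus\{0\}$. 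As $\alpha=-h_H<0$ and $\alpha+\beta=h_H\bigl(h_H V'(c_1(h_H))-1\bigr)>0$, the hypothesis $(-h_H,h_H^2 V'(c_1(h_H)))\in\partial S$ puts the parameter pair on the arc $C_1$ of Proposition~\ref{prop: region_S} rather than on $G_0$ or $G_1$; hence, by part (ii) of that proposition, the only spectrum of the quotient generator on the imaginary axis is the simple pair $\pm i\omega$, $\omega>0$, all remaining spectrum lying in the open left half-plane.

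Next I would carry out a parameter-dependent centre manifold reduction of the quotient equation near $h=h_H$, producing a planar ordinary differential equation $\dot x=\mathcal A(h)\,x+N(x;h)$ with $N(0;h)=0$, $D_xN(0;h)=0$ and $\mathcal A(h_H)$ having eigenvalues $\pm i\omega$. The transversality $\tfrac{d}{dh}\mathrm{Re}\,\lambda(h)\big|_{h=h_H}\neq 0$ for the root branch $\lambda(h)$ of \eqref{eq: char} with $\lambda(h_H)=i\omega$ I would verify by implicit differentiation of \eqref{eq: char} (the root $i\omega$ being simple, so the implicit function theorem applies), using $c_1'(h)=V(c_1(h))/(1-h\,V'(c_1(h)))$ from Theorem~\ref{thm: two_branches}; equivalently, one shows that the curve $h\mapsto(-h,h^2 V'(c_1(h)))$ meets $C_1$ non-tangentially at $h_H$, which is forced by the transition from $(\alpha,\beta)\in S$ to $(\alpha,\beta)\notin\overline S$ recorded in Theorem~\ref{thm: stability of first branch}. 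A classical Hopf theorem then yields a branch of periodic orbits of the planar system with periods near $2\pi/\omega$, and the first Lyapunov coefficient $\ell_1$ decides its direction and stability. The nonlinearity $r$ of Eq.~\eqref{eq: DDE normal} has leading part $\bigl(0,\ \tfrac{h^2}{2}V''(c)(\varphi_1(-1)-\varphi_1(0))^2+\cdots\bigr)^{\!T}$, so $\ell_1$ is expressed through $V''(c)$ (and, if available, the cubic coefficient through $V'''(c)$); establishing $\ell_1<0$ gives the supercritical case, in which the periodic orbits exist for $h$ on the side where the reduced equilibrium is unstable and are orbitally asymptotically stable inside the centre manifold, hence, the centre manifold being attracting, orbitally asymptotically stable modulo translation for Eq.~\eqref{eq: DDE_2}.

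Finally I would lift the bifurcating orbit back. A $P$-periodic orbit of $\bar{\tilde F}$ pulls back to a solution $v$ of Eq.~\eqref{eq: DDE_2} with $v_{t+P}=v_t+\rho\,\hat{e}_1$ for a constant $\rho\in\R$ (uniqueness of solutions forcing $\rho$ to be independent of $t$), hence to a solution of Eq.~\eqref{eq: DDE-canonical} with $w_{t+P}=w_t+\rho\,\hat{e}_1$; in terms of Eq.~\eqref{eq: DDE} this reads $z'(t+P)=z'(t)$ and $z(t+P)-z(t)=\int_0^P z'(s)\,ds$, and the latter integral is close to $-c\,P\neq 0$ because $z'$ is close to the constant $-c<0$ and $c>0$. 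Thus $z'$ is $P$-periodic while $z$ is not. Letting $h$ cross $h_H$ along the first branch then gives the asserted degenerate Hopf bifurcation at $\bigl(h_H,\,w^{c_1(h_H),d}\bigr)$, the degeneracy being the coexisting neutral direction $\R\,\hat{e}_1$ (a full line of wavefront solutions of the same speed), as anticipated in Remark~\ref{rem: bifurcation}. I expect two points to be genuinely hard: the transversality of $h\mapsto(-h,h^2 V'(c_1(h)))$ against the only implicitly given arc $C_1$, and, above all, the sign of $\ell_1$ — under the mere $C^2$-hypothesis on $V$ the intrinsic cubic term is not even defined, so proving supercriticality will require either upgrading the smoothness of $V$ near $c_1(h_H)$ or extracting the sign of $\ell_1$ from $V''(c_1(h_H))$ and the structural assumptions (OVF~1)--(OVF~4) alone.
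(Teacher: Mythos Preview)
The statement you are attempting to prove is labelled \emph{Conjecture} in the paper and is not proved there. The paper says explicitly that this ``shall be addressed analytically in \cite{Stumpf2016}'', which is listed as work in progress, and the surrounding discussion is purely heuristic and numerical. Hence there is no proof in the paper against which your proposal can be compared.

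That said, your outline is the natural line of attack and is broadly consistent with the hints the paper drops (Remark~\ref{rem: bifurcation}, the discussion following Conjecture~\ref{con}). Quotienting out the translation direction $\R\,\hat e_1$ to reduce to a genuine Hopf scenario, then lifting the resulting periodic orbit to a relative periodic orbit of Eq.~\eqref{eq: DDE-canonical}, is exactly how one handles equivariant Hopf bifurcations with a drift symmetry, and your argument for why $z'$ is periodic while $z$ is not is correct. You have also correctly identified the two genuine obstacles: transversality of the eigenvalue crossing (your suggestion to differentiate \eqref{eq: char} implicitly along $h\mapsto(-h,h^2 V'(c_1(h)))$ is the right idea, but showing the resulting expression is nonzero will need the structure of $C_1$ and of $c_1'(h)$), and the sign of the first Lyapunov coefficient. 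On the latter your caveat is well taken: the statement only assumes $V\in C^2$, so the cubic normal-form coefficient is not a priori defined, and the paper's own hypotheses (OVF~1)--(OVF~4) do not pin down the sign of $V''(c_1(h_H))$ either. A complete proof will almost certainly require either a stronger smoothness hypothesis or an additional structural condition on $V$ near $c_1(h_H)$; as stated, supercriticality cannot be expected to follow from (OVF~1)--(OVF~4) alone.
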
 

Returning to our example under consideration, we note that the branch $c_1=c_1(h)$ of wavefront solutions $w^{c_1(h), 0}$ with constant-speed is at least defined for all $h^{\star}<h\leq h_e$ with $h^{\star}=2/V^{\max}\approx 0.8127$. Next, after fixing some $h_f>h^{\star}$ with $h_f-h^{\star}>0$ sufficiently small, a simple argument shows that the associated stability parameters 
\begin{equation*}
\alpha=\alpha:=-h_f\qquad \text{and}\qquad \beta=\beta_f:=h_f^2\,V^\prime(c_1(h_f))
\end{equation*} of solution $w^{c_1(h_f), 0}$ form a point inside the region $S$ from Proposition \ref{prop: region_S}. Thus, $w^{c_1(h_f), 0}$ is stable due to Theorem \ref{thm: stability of first branch}. Now, let us increase the parameter value $h$ continuously from $h_f$ to $h_e$. At first, all the solutions $w^{c_1(h), 0}$ remain stable as the associated parameters $(\alpha(h),\beta(h)):=(-h,h^2\,V^{\prime}(c_1(h))))$ are contained inside $S$. On the other hand, the curve $\mathcal{C}: [h_f,h_e]\ni h\mapsto (\alpha(h),\beta(h))$ has to leave and stay outside of $S$ for all sufficiently large $h\leq h_e$, since we have $(\alpha(h_e),\beta(h_e))=(\alpha_e,\beta_e)$ and do already know that $(\alpha_e,\beta_e)\in ((-\infty,0]\times[0,\infty))\backslash \overline{S}$. Therefore, the curve $\mathcal{C}$ has to cross the boundary $\partial S$ of $S$ at some point $h_f<h_H<h_e$. Moreover, it is easily seen that the curve $\mathcal{C}$ has to do so by crossing the curve $C_1$ from Proposition \ref{prop: region_S} which particularly shows that by increasing $h$ about the value $h_H$ a pair of simple complex conjugate eigenvalues of the linearization moves from the left to the right half-plane of $\mathbb{C}$. For that reason, by increasing $h$ from $h_f$ to $h_e$ we loose the stability of the associated solution $w^{c_1(h), 0}$ at the value $h=h_H$. But, as conjectured, that is done by undergoing a supercritical Hopf bifurcation such that, for each parameter $h>h_H$ with $h-h_H>0$ sufficiently small, we find a locally stable solution $w^{h}_H$ of Eq. \eqref{eq: DDE-canonical} which is not periodic but its first derivative.

With the above in mind, let us briefly revisit the numerical simulation in this example. As already said, the solution $z$ is unstable. On the other hand, it seems that the bifurcating branch of solutions $w_H^h$ is even defined for the parameter value $h=h_e$. In fact, most likely, Figure \ref{fig: example 3_2} namely shows, the solution $w^{h_e}_H$ which is locally stable. So, after having left the quasi-stationary state of $z$ due to the rounding in the floating point arithmetic and the instability, the computed solution seems first to be attracted by $w^{h_e}_H$, and then, after sufficiently long time, to coincide, more or less, with $w^{h_e}_H$ as indicated in Figure \ref{fig: example 3}.

Finally, observe that the example discussed here is also significant for the traffic model described by Eq. \eqref{eq: traffic_model} as it suggests the existence of wavefront solutions with stop-and-go behavior. Indeed, a bifurcating solution of Eq. \eqref{eq: DDE-canonical} from Conjecture \ref{con} leads to a solution of the traffic model where each driver accelerates and brakes alternately.


\end{document}